\newcommand{\fg}{{\mathfrak{g}}}
\newcommand{\bp}{{\mathbb{P}}}
\newcommand{\fh}{{\mathfrak{h}}}
\newcommand{\fl}{{\mathfrak{l}}}
\newcommand{\fp}{{\mathfrak{p}}}
\newcommand{\fz}{{\mathfrak{z}}}
\newcommand{\fsl}{{\mathfrak{sl}}}
\newcommand{\fgl}{{\mathfrak{gl}}}
\newcommand{\tr}{{\rm tr} \,}
\newcommand{\pr}{{\rm pr} \,}
\newcommand{\Tr}{{\rm Tr} \,}
\newcommand{\End}{{\rm End} \,}
\newcommand{\Ad}{{\rm Ad} \,}
\newcommand{\rk}{{\rm rk} \,}
\newcommand{\gr}{{\rm gr} \,}
\newcommand{\bc}{\mathbb{C}}
\newcommand{\bz}{\mathbb{Z}}
\newcommand{\oG}{\overline{G}}
\newcommand{\diag}{{\rm diag} \,}
\newtheorem{defn}[subsection]{Definition}
\newtheorem{thm}[subsection]{Theorem}
\newtheorem{lem}[subsection]{Lemma}
\newtheorem{prop}[subsection]{Proposition}
\newtheorem{cor}[subsection]{Corollary}
\newtheorem{conj}[subsection]{Conjecture}
\newtheorem*{rem}{Remark}
\newtheorem{remm}[subsection]{Remark}
\title{Bethe subalgebras in Yangians and the wonderful compactification}
\author{Aleksei Ilin and Leonid Rybnikov}
\begin{document} 
\begin{abstract}
Let $\fg$ be a complex simple Lie algebra. We study the family of Bethe subalgebras in the Yangian $Y(\fg)$ parameterized by the corresponding adjoint Lie group $G$. We describe their classical limits as subalgebras in the algebra of polynomial functions on the formal Lie group $G_1[[t^{-1}]]$. In particular we show that, for regular values of the parameter, these subalgebras are free polynomial algebras with the same Poincar\'e series as the Cartan subalgebra of the Yangian. Next, we extend the family of Bethe subalgebras to the De Concini--Procesi wonderful compactification $\overline{G}\supset G$ and describe the subalgebras corresponding to generic points of any stratum in $\overline{G}$ as Bethe subalgebras in the Yangian of the corresponding Levi subalgebra in $\fg$. In particular, we describe explicitly all Bethe subalgebras corresponding to the closure of the maximal torus in the wonderful compactification.   
\end{abstract}
\maketitle

\section{Introduction}
\subsection{Bethe subalgebras in the Yangian}
Let $\fg$ be a complex simple  Lie algebra, $G$ the corresponding adjoint Lie group, $G^{reg}$ the set of regular elements in $G$. The Yangian $Y(\fg)$ is the canonical Hopf algebra deformation of the enveloping algebra $U(\fg[t])$ of the Lie algebra of polynomial maps $\bc \to \fg$. On the other hand, $Y(\fg)$ is a deformation of the algebra $\mathcal{O}(G_1[[t^{-1}]])$ of polynomial functions on the first congruence subgroup $G_1[[t^{-1}]]$ with the rational $r$-matrix Poisson bracket. For $\fg = \fsl_n$ this algebra appears in  the works of L. Fadeev and
St.-Petersburg school in relation to the inverse scattering method, see e.g. \cite{tarasov}, \cite{fadeev}. In full generality this algebra firstly appears in the paper of V. Drinfeld \cite{drin}.

There is a family of commutative subalgebras $B(C)$ in $Y(\fg)$, called Bethe subalgebras, parameterized by $C \in G$. In this generality such subalgebras were first mentioned by Drinfeld \cite{drin2}. For $\fg = \fsl_n$ these subalgebras were studied by M. Nazarov and G.Olshanski in \cite{nazol}, for classical Lie algebras in the work of A.Molev \cite{molev2}. In  \cite{mo} D. Maulik and A. Okounkov define Bethe subalgebras in the $RTT$ Yangian in connection with quantum cohomology of quiver varieties. We describe the associated graded of $B(C)$ in $\mathcal{O}(G_1[[t^{-1}]])$ as follows.
For any finite-dimensional representations $V$ of $G$ we assign the $\bc[[t^{-1}]]$-valued function $\sigma_V(C)\in\mathcal{O}(G_1[[t^{-1}]])$ such that $\sigma_V(C)(g(t)) := \Tr_V Cg(t)$ for all $g(t)\in G_1[[t^{-1}]]$. Then the subalgebra $\gr B(C)\subset\mathcal{O}(G_1[[t^{-1}]])$ is
generated by all Fourier components of $\sigma_V(C)$ for all finite-dimensional representations $V$ of $G$. This implies that if $C \in G^{reg}$ then $B(C)$ is a free polynomial algebra with $\rk\fg$ infinite series of generators of degrees $1,2,3,\ldots$. In particular, it has the same Poincar\'e series as the Cartan subalgebra of $Y(\fg)$ (Theorem \ref{thm1}). These results seem to be well-known, but we did not find any proof of them in the literature.

This allows to define {\em limit} Bethe subalgebras with the same Poincar\'e series, considering the closure of $G^{reg}$ in appropriate Grassmanians, see \cite{ir}. On the other hand, one can extend the family of Bethe subalgebras using the wonderful compactification of the adjoint group $G$. In the present paper we describe the connection between these approaches.

\subsection{Wonderful closure.} The main purpose of this paper is to extend the definition of Bethe subalgebras to $C\in\oG$, the De Concini--Procesi {\it wonderful compactification} of the group $G$, \cite{DCP}. The latter is a smooth projective $G\times G$-variety which contains $G$ as an open subset. The strata (i.e. $G\times G$-orbits) of $\oG$ are indexed by all subsets of simple roots of $\fg$ and can be described as $\frac{G\times G\times G_I}{P_I\times P_I^-}$ where $P_I$, $P_I^-$ are opposite parabolic subgroups determined by a subset $I$, and $G_I$ is the semisimple quotient of the Levi subgroup $L_I=P_I\cap P_I^-$. In particular, if $I = \emptyset$ then $P_I$ is a Borel subgroup of $G$, so the minimal $G\times G$-orbit in $\oG$ is just the Cartesian square of the flag variety, $G/P_{\emptyset}\times G/P_{\emptyset}$. The subalgebras corresponding to the boundary points of $\oG$ are some new commutative subalgebras in $Y(\fg)$. We describe explicitly the subalgebras corresponding to generic points of any stratum of $\oG$. Namely, these subalgebras are conjugate to Bethe subalgebras of the Yangian of some Levi subalgebra $\fl\subset\fg$ (Theorem~\ref{bethelevi}). In particular, for any $C$ in the open $G$-orbit on the closed stratum $G/P_{\emptyset}\times G/P_{\emptyset}$ the corresponding Bethe subalgebra is some conjugate of the Cartan subalgebra $H$ of $Y(\fg)$. 

We show that Bethe subalgebras corresponding to boundary points of $\oG$ are contained in some limit Bethe subalgebras. Moreover, we expect that the parameter space of limit Bethe subalgebras is some resolution of $\oG$, i.e. there is a proper birational surjective map from this parameter space to $\oG$. 



\subsection{Restriction to the maximal torus.} Our main interest is the family of Bethe subalgebras $B(C)$ such that $C$ is from the given maximal torus $T\subset G$. The action of such $B(C)$ preserves the weight decomposition of any $Y(\fg)$-module. We show that all points in the closure $\overline{T}$ of $T$ in the wonderful compactification $\overline{G}$ are ``generic'', i.e. the subalgebras corresponding to \emph{all} boundary points of $\overline{T}$ are Bethe subalgebras in the Yangians of some Levi subalgebras (Proposition~\ref{torus}). For $\fg=\fsl_n$ this gives a new proof for \cite[Theorem~5.3.1]{ir} describing limit Bethe subalgebras in the Yangian of $\fsl_n$ as the parameter goes to the infinity.

\subsection{$RTT$-presentaton of the Yangian} The most convenient way to define Bethe subalgebras is the $R$-matrix (or $RTT$) presentation of the Yangian which determines $Y(\fg)$ as the algebra generated by the evaluation of the universal $R$-matrix on some particular representation, modulo quadratic relations which follow from the Yang-Baxters equation.

We follow the recent paper of Wendlandt \cite{wend} where the description $RTT$ presentation of $Y(\fg)$ for any non-trivial representation $V$ is given. For our purpose we take $V=\bigoplus V(\omega_i, 0)$, the direct sum of the fundamental representations $V(\omega_i, 0)$ of $Y(\fg)$. In this presentation Bethe subalgebras are generated by the traces of the $T$-matrix in all fundamental representations, i.e. the generators of a Bethe subalgebra are just linear combinations of the generators of $Y(\fg)$. Our main tool for proving Theorem \ref{bethelevi} is the presentation of the Yangian of a Levi subalgebra in terms of this $RTT$ presentation (Proposition~\ref{ylevi}).

\subsection{The paper is organized as follows.}
In Section~2 we recall some known facts about Yangians and prove some technical propositions. In Section~3 we define an embedding of the Yangian of a Levi subalgebra to  the Yangian of $\fg$ in the $RTT$ presentation. In Section~4 we define Bethe subalgebras, describe their classical limits and prove that they are free polynomial algebras for any regular value of the parameter. In Section~5 we define Bethe subalgebras corresponding to boundary points in $\oG$ and describe these subalgebras for a Zariski open subset of any stratum of $\oG$. In Section~6 we compare our construction in the case $\fg=\fsl_n$ with that of \cite{ir}.

\subsection{Acknowledgements.} We thank the referees for the careful reading of the first version of the text and for many helpful remarks. This research was carried out within the HSE University Basic Research Program and funded by the Russian Academic Excellence Project '5-100'. The results of Section 4 has been obtained under support of the RSF grant 19-11-00056. The work of both authors has also been supported in part by the Simons Foundation. The first author is a Young Russian Mathematics award winner and would like to thank its sponsors and jury.


\section{Three realizations of the Yangian}
\subsection{Notation}
Let $\fg$ be a simple Lie algebra over $\bc$ of rank $n$, $\tilde G$ the corresponding connected simply-connected group, $G = \tilde G / Z(\tilde G)$ the adjoint form of a group, $\Delta = \{\alpha_1, \ldots, \alpha_n\}$ the set of simple roots of $\fg$, $\Phi^+$ the set of positive roots,  $\Phi$ the set of all roots,  $\{ \omega_i \,|\, i=1,\ldots,n \}$ the set of fundamental weights. Let $e_i, f_i, i \in \Delta$ be the standard Chevalley generators of $\fg$.

We normalize the invariant scalar product on $\fg$ such that $(\alpha_i,\alpha_i) = 2$ for short simple roots.
By $t_{\omega_i}$ we denote the element in $\fh \simeq \fh^*$ which corresponds to $\omega_i$ by means of this scalar product. Similarly, we denote by $h_i$ the element corresponds to $\alpha_i$. Let $d_i = (\alpha_i,\alpha_i)/2$. Note that $t_{\omega_i}$ and $\frac{1}{d_i} h_i$ are dual bases of $\fh$ with respect to the invariant scalar product. 

For any $\alpha \in \Phi^+$, let $x_{\alpha}^+$ and $x_{\alpha}^-$ be the elements in the corresponding root spaces of $\fg$ such that $(x_{\alpha}^+, x_{\alpha}^-) = 1$. Note that $x_{\alpha_i}^+ = -d_i^{1/2} e_i, x_{\alpha_i}^- = -d_i^{1/2} f_i$.

We define the Casimir elements by
$$c = \sum_{\alpha \in \Phi^+} (x_{\alpha}^+ x_{\alpha}^- + x_{\alpha}^- x_{\alpha}^+) + \frac{1}{d_i}\sum_i t_{\omega_i} h_i \in U(\fg),$$  $$\Omega =  \sum_{\alpha \in \Phi^+} (x_{\alpha}^+ \otimes x_{\alpha}^- + x_{\alpha}^-\otimes x_{\alpha}^+) + \frac{1}{d_i}\sum_i t_{\omega_i}\otimes h_i \in \fg\otimes \fg. $$
Note that the Casimir element can be represented as $\Omega =  \sum_a x_a \otimes x^a,$ where $x_a$ and $x^a$ are any dual bases of the Lie algebra $\fg$.

\begin{defn}
The Yangian $Y(\fg)$ is a unital associative algebra over $\bc$ generated by the elements $\{x, J(x) \, | \, x \in \fg \}$ with the following defining relations:
$$xy-yx = [x,y], \qquad J([x,y]) = [J(x), y ],  $$
$$J(cx+dy) = cJ(x) + dJ(y),$$
$$[J(x),[J(y),z]] - [x,[J(y),J(z)]]= \sum_{\lambda, \mu, \nu \in \Lambda} ([x,x_{\lambda}], [[y,x_{\mu}],[z,x_{\nu}]])\{x_{\lambda}, x_{\mu}, x_{\nu}\},$$
\begin{eqnarray*}[[J(x),J(y)],[z,J(w)]]+[[J(z),J(w)],[x,J(y)]] = \\
 =\sum_{\lambda,\mu,\nu\in \Lambda}\left(([x,x_\lambda],[[y,x_\mu],[[z,w],x_\nu]])+([z,x_\lambda],[[w,x_\mu],[[x,y],x_\nu]])\right)\{x_\lambda,x_\mu,J(x_\nu)\}\end{eqnarray*}
for all $x,y,z,w\in \fg$ and $c,d\in \bc$, where $\{x_{\lambda}\}_{\lambda \in \Lambda}$ is some orthonormal basis of $\fg$, $\{x_1,x_2,x_3\}=\frac{1}{24}\sum_{\pi \in \mathfrak{S}_3}x_{\pi(1)}x_{\pi(2)}x_{\pi(3)}$ for all $x_1,x_2,x_3\in Y(\fg)$. 
\end{defn}
Note that in the case $\fg \not\simeq \fsl_2$ the last relation can be omitted.
\begin{defn}
The Yangian $Y_{new}(\fg)$ is a unital associative algebra over $\bc$ generated by elements $\{e_i^{(r)}, f_i^{(r)}, h_i^{(r)} \, | \, i=1, \ldots, n; r \geqslant 1 \}$ with the following defining relations:

$$[h_i^{(s)}, h_j^{(s)}] = 0,$$
$${[e_i^{(r)}, f_j^{(s)}]} = \delta_{ij} h_i^{(r+s-1)},$$
$${[h_i^{(1)}, e_j^{(s)}]} = (\alpha_i, \alpha_j) e_j^{(s)},$$
$${[h_i^{(r+1)},e_j^{(s)}]} - [h_i^{(r)}, e_j^{(s+1)}] = \frac{(\alpha_i, \alpha_j)}{2} (h_i^{(r)} e_j^{(s)} + e_j^{(s)} h_i^{(r)}) ,$$
$${[h_i^{(1)}, f_j^{(s)}]} = - (\alpha_i, \alpha_j) f_j^{(s)},$$
$${[h_i^{(r+1)},f_j^{(s)}]} - [h_i^{(r)}, f_j^{(s+1)}] = -\frac{(\alpha_i, \alpha_j)}{2} (h_i^{(r)} f_j^{(s)} + f_j^{(s)} h_i^{(r)}) ,$$
$${[e_i^{(r+1)}, e_j^{(s)}]} - [e_i^{(r)}, e_j^{(s+1)}] = \frac{(\alpha_i, \alpha_j)}{2} (e_i^{(r)} e_j^{(s)} + e_j^{(s)} e_i^{(r)}),$$
$${[f_i^{(r+1)}, f_j^{(s)}]} - [f_i^{(r)}, f_j^{(s+1)}] = -\frac{(\alpha_i, \alpha_j)}{2} (f_i^{(r)} f_j^{(s)} + f_j^{(s)} f_i^{(r)}),$$
$$i \neq j, N = 1 - a_{ij} \Rightarrow
\operatorname{sym} [e_i^{(r_1)}, [e_i^{(r_2)}, \cdots
[e_i^{(r_N)}, e_j^{(s)}]\cdots]] = 0 $$
$$i \neq j, N = 1 - a_{ij} \Rightarrow
\operatorname{sym} [f_i^{(r_1)}, [f_i^{(r_2)}, \cdots
[f_i^{(r_N)}, f_j^{(s)}]\cdots]] = 0 $$

\end{defn}
We will need the following elements of $Y_{new}(\fg)$:
$$e_{\alpha_i}^{(r)} = e_i^{(r)}$$
$${[e_{\hat{\alpha}}^{(r)}, e_{\check{\alpha}}^{(1)}]} = e_{\alpha}^{(r)}$$
$$f_{\alpha_i}^{(r)} = f_i^{(r)}$$
$${[f_{\hat{\alpha}}^{(r)}, f_{\check{\alpha}}^{(1)}]} = f_{\alpha}^{(r)}$$
Here $\check\alpha$ is the smallest simple root such that $\hat\alpha = \alpha - \check\alpha$ is again a positive root.
\begin{thm}(\cite[Proposition 3.2]{kamn})
Ordered monomials in the variables $e_{\alpha}^{(r)}, h_i^{(r)}, f_{\alpha}^{(r)}$, with $\alpha\in\Phi^+,\ i\in\Delta,\ r\in\mathbb{Z}_{>0}$, form a PBW basis of $Y_{new}(\fg)$.
\end{thm}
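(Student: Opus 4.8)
The plan is to identify the claimed set of ordered monomials with a PBW basis for the \emph{loop filtration} on $Y_{new}(\fg)$, the increasing algebra filtration in which $\deg e_i^{(r)}=\deg f_i^{(r)}=\deg h_i^{(r)}=r-1$. First I would check that this really is an algebra filtration: going through the defining relations, in every relation the leading bracket term and the whole right-hand side carry the same loop-degree, and in the ``spectral-shift'' relations such as $[h_i^{(r+1)},e_j^{(s)}]-[h_i^{(r)},e_j^{(s+1)}]=\tfrac{(\alpha_i,\alpha_j)}{2}(h_i^{(r)}e_j^{(s)}+e_j^{(s)}h_i^{(r)})$ the quadratic correction has \emph{strictly smaller} loop-degree than the commutator on the left. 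It follows that the associated graded algebra $\gr Y_{new}(\fg)$ is generated by the symbols $\overline{e_i^{(r)}},\overline{f_i^{(r)}},\overline{h_i^{(r)}}$, and that the only relations surviving in $\gr Y_{new}(\fg)$ are the defining relations of $U(\fg[t])$ under $e_it^{r-1}\mapsto\overline{e_i^{(r)}}$, $f_it^{r-1}\mapsto\overline{f_i^{(r)}}$, $h_it^{r-1}\mapsto\overline{h_i^{(r)}}$. This produces a surjective homomorphism of graded algebras $\phi\colon U(\fg[t])\twoheadrightarrow\gr Y_{new}(\fg)$.

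Next I would show that the ordered monomials in $e_\alpha^{(r)},h_i^{(r)},f_\alpha^{(r)}$ span $Y_{new}(\fg)$. The elements $\{e_\alpha t^{r-1},\,h_it^{r-1},\,f_\alpha t^{r-1}\}$ form a basis of $\fg[t]$, so the corresponding ordered PBW monomials form a basis of $U(\fg[t])$ and their $\phi$-images span $\gr Y_{new}(\fg)$. A short induction on the height of $\alpha$, using that $\deg e_{\check\alpha}^{(1)}=0$ and that loop-degrees add, shows that $\phi(e_\alpha t^{r-1})$ is the loop-degree-$(r-1)$ symbol of the element $e_\alpha^{(r)}=[e_{\hat\alpha}^{(r)},e_{\check\alpha}^{(1)}]$, and similarly for $f_\alpha^{(r)}$; hence each $\phi$-image of a PBW monomial is the top symbol of the corresponding ordered monomial in $Y_{new}(\fg)$. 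A standard induction on loop-degree then promotes ``the symbols span $\gr Y_{new}(\fg)$'' to ``the ordered monomials span $Y_{new}(\fg)$''.

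The heart of the matter is to prove that $\phi$ is injective, equivalently that the ordered monomials are linearly independent. Surjectivity of $\phi$ is formal, but the absence of a collapse is exactly the content of PBW and has to be imported from outside. The cleanest route I see is to invoke Drinfeld's isomorphism between $Y_{new}(\fg)$ and $Y(\fg)$ in the presentation of the first definition above, together with the known flatness of $Y(\fg)$ as a deformation of $U(\fg[t])$ (so that the associated graded of $Y(\fg)$ for its natural filtration is $U(\fg[t])$); matching this filtration with the loop filtration on $Y_{new}(\fg)$, one gets that each loop-degree component of $\gr Y_{new}(\fg)$ has the same finite dimension as the corresponding component of $U(\fg[t])$, and a graded surjection $\phi$ between spaces of equal finite graded dimension must be an isomorphism. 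An alternative, more hands-on route is representation-theoretic: construct enough $Y_{new}(\fg)$-modules — for instance tensor products of evaluation modules with Verma-type modules induced from the positive and negative ``Borel'' subalgebras — and argue that a nontrivial linear relation among the ordered monomials would have to act by zero on all of them, which it cannot.

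The main obstacle is precisely this last step: obtaining the lower bound on the dimensions of the loop-degree components of $Y_{new}(\fg)$ cannot come from the relations and requires either Drinfeld's isomorphism plus the (nontrivial) flatness of $Y(\fg)$, or an explicit construction of sufficiently many representations. A secondary technical point is the bookkeeping for the non-simple root vectors: one must verify that the recursive prescription — bracket with the smallest simple root $\check\alpha$ for which $\hat\alpha=\alpha-\check\alpha$ is still positive — makes $e_\alpha^{(r)}$ and $f_\alpha^{(r)}$ well defined and endows them with the symbols used in the spanning step, which is exactly where the precise definition of $\check\alpha$ and $\hat\alpha$ is needed.
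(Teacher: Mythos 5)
The paper does not prove this statement itself but cites \cite[Proposition 3.2]{kamn}, and your outline --- filter by loop degree $r-1$, obtain a graded surjection $U(\fg[t])\twoheadrightarrow \gr Y_{new}(\fg)$ sending PBW monomials to symbols of ordered monomials, and deduce injectivity from the PBW/flatness theorem for the $J$-presentation of $Y(\fg)$ via the isomorphism $\phi$ --- is essentially the argument given in that reference. Your identification of the injectivity step as the one ingredient that must be imported from outside (it amounts to the flatness of the Yangian as a deformation of $U(\fg[t])$, which is established independently for the $J$-presentation) is exactly right.
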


\begin{remm}
\label{gennew}
In the course of proving Theorem 2.4 it is easy to see that the set $$e_i^{(1)}, f_i^{(1)}, h_i^{(1)}, h_i^{(2)}, i=1, \ldots, n$$ generates $Y_{new}(\fg)$.
\end{remm}

Following Drinfeld \cite{drin2} we call the subalgebra $H\subset Y(\fg)$ generated by all $h_i^{(r)}, 1 \leq i \leq n, 1 \leq r$ the \emph{Cartan subalgebra}. We will need the following maximality property of the Cartan subalgebra:

\begin{prop}
\label{cartan}
The centralizer of the set $\{ h_i^{(2)} \, | \, i =1, \ldots, n  \}$  is $H$. In particular, $H$ is a maximal commutative subalgebra of $Y_{new}(\fg)$.
\end{prop}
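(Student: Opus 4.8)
The plan is to reduce the statement to a computation in the associated graded. One inclusion is free: the generators $h_i^{(r)}$ of $H$ pairwise commute, so $H$ lies in the centralizer of $\{h_i^{(2)}\}$. For the converse, recall that $Y_{new}(\fg)$ carries the standard filtration with $\deg e_i^{(r)}=\deg f_i^{(r)}=\deg h_i^{(r)}=r-1$ and associated graded $U(\fg[t])$, the isomorphism sending $h_i^{(r)}$ to $h_i t^{r-1}$ (and $e_i^{(r)},f_i^{(r)}$ to $e_it^{r-1},f_it^{r-1}$). In particular $\gr H$ is the subalgebra of $U(\fg[t])$ generated by all $h_i t^{r-1}$, i.e. $\gr H=U(\fh[t])$. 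Now let $z$ be in the centralizer of $h_1^{(2)},\dots,h_n^{(2)}$, put $d=\deg z$, and let $\bar z$ be the principal symbol of $z$, an element of the degree-$d$ part of $U(\fg[t])$. Since $\deg h_i^{(2)}=1$ and $[z,h_i^{(2)}]=0$, passing to symbols in degree $d+1$ (where the symbol of $[z,h_i^{(2)}]$ is $[\bar z,h_it]$) gives $[\bar z,h_it]=0$ in $U(\fg[t])$ for all $i$; that is, $\bar z$ lies in the centralizer of $\fh\otimes t$ in $U(\fg[t])$.

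I would then prove the key claim that $Z_{U(\fg[t])}(\fh\otimes t)=U(\fh[t])$. Granting it, $\bar z\in U(\fh[t])$, so it equals the principal symbol of some $h\in H$ of degree $d$; the element $z-h$ again centralizes all $h_i^{(2)}$ and has strictly smaller degree, so induction on $d$ yields $z\in H$. The ``in particular'' is then immediate: any commutative subalgebra $A\supseteq H$ centralizes $\{h_i^{(2)}\}\subset H$, hence $A\subseteq H$ and $A=H$.

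To prove the key claim I would descend one more level. With respect to the PBW filtration, $\gr U(\fg[t])=S(\fg[t])$, the Poisson algebra with $\{xt^a,yt^b\}=[x,y]t^{a+b}$; if $u\in U(\fg[t])$ centralizes $\fh\otimes t$ then its PBW-symbol lies in $\bigcap_{h\in\fh}\ker\{ht,\cdot\}$, and the same ``subtract a Cartan element, induct on PBW-degree'' device then reduces everything to identifying this intersection. Decompose $S(\fg[t])=S(\fh[t])\otimes R$, with $R$ the subalgebra generated by the root-vector variables $x_\alpha t^k$ ($\alpha\in\Phi$, $k\ge 0$, where $x_\alpha$ spans $\fg_\alpha$). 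The derivation $\{ht,\cdot\}$ annihilates $S(\fh[t])$ and acts on $R$ as $\sum_{\alpha,k}\alpha(h)\,(x_\alpha t^{k+1})\,\partial_{x_\alpha t^k}$. For a generic $h$, where all $\alpha(h)\ne 0$, this derivation has kernel exactly $\bc$: if $P\in R$ were a nonconstant kernel element and $N$ the largest exponent of $t$ occurring in $P$, then the variables $x_\alpha t^{N+1}$ would appear in $\{ht,\cdot\}P$ with no possible cancellation, a contradiction. Hence $\bigcap_h\ker\{ht,\cdot\}=S(\fh[t])$, and the claim follows.

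The only genuine difficulty is this last computation of the centralizer of $\fh\otimes t$; the rest is routine handling of the two filtrations. The point to watch is that reordering into PBW form inside $U(\fg[t])$ manufactures Cartan elements out of products of opposite root vectors, which is exactly why it is worthwhile to pass all the way down to the \emph{commutative} Poisson algebra $S(\fg[t])$ before running the elementary ``highest power of $t$'' argument.
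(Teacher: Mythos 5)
Your proof is correct, and it reaches the conclusion by a genuinely different route from the paper's, even though the underlying mechanism is the same: bracketing with $h_i^{(2)}$ raises the level of any non-Cartan generator by one, and this top term cannot cancel. The paper runs this argument directly inside $Y_{new}(\fg)$, fixing a lexicographic order on the PBW monomials in $e_\alpha^{(r)}, h_i^{(r)}, f_\alpha^{(r)}$ and observing that the leading monomial of $[h_i^{(2)},A]$ survives whenever $A\notin H$. You instead perform two associated-graded reductions: first to $U(\fg[t])$ via the loop filtration $\deg x^{(r)}=r-1$ (under which $h_i^{(2)}$ becomes $h_it$ and $\gr H=U(\fh[t])$), then to $S(\fg[t])$ via the PBW filtration, so that the whole statement collapses to computing the kernel of the derivation $\{ht,\cdot\}$ on a commutative polynomial ring; your ``largest power of $t$'' argument there is the commutative shadow of the paper's leading-monomial argument, and I have checked that the no-cancellation step, the identification $\bigcap_h\ker\{ht,\cdot\}=S(\fh[t])$, and both subtract-and-induct steps all go through. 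What your route buys is that the delicate cancellation analysis happens in a polynomial algebra where it is transparent; what it costs is the extra input $\gr Y_{new}(\fg)\cong U(\fg[t])$ with symbols $h_i^{(r)}\mapsto h_it^{r-1}$ --- a standard fact, essentially equivalent to the PBW theorem the paper invokes, but attached to a different filtration from the one the paper actually sets up (the paper's filtration has $\deg x^{(r)}=r$ and associated graded $\mathcal{O}(G_1[[t^{-1}]])$), so you should state it with a citation rather than leave it implicit.
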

\begin{proof}
We have to show that everything commuting with $h_i^{(2)},\ i=1,\ldots,n,$ belongs to $H$. Any element of $Y_{new}^{(r)}$ can be represented as a sum of monomials in $e_{\alpha}^{(r)}, h_i^{(r)}, f_{\alpha}^{(r)}=e_{-\alpha}^{(r)}$. Consider the lexicographic order on the monomials determined by the following order on the PBW generators: $h_i^{(r)}<h_j^{(s)}$ if $r<s$ or $r=s$ and $i<j$; $h_i^{(r)}<e_j^{(s)}$ and $h_i^{(r)}<f_j^{(s)}$ for all $i,j,r,s$; $e_\alpha^{(r)}<e_\beta^{(s)}$ if $r<s$ or $r=s$ and $\alpha<\beta$ (for some order on $\Phi$). Suppose there is an element $A\in Y_{new}(\fg)$ which commutes with all $h_i^{(2)}$ and does not belong to $H$. Let $e_\alpha^{(r)}$ be maximal divisor of the leading monomial $lm(A)$ of $A$, and let $i$ be such that $(\alpha_i,\alpha)\ne0$. Then the leading monomial of $[h_i^{(2)},lm(A)]$ is divisible by $e_{\alpha}^{(r+1)}$ while the other monomials of $[h_i^{(2)},A]$ are not. Hence the leading monomial cannot be canceled, so the commutator is not zero.
\end{proof}

\begin{thm}(\cite{GRW})
\label{phi}
There is an isomorphism $\phi:Y_{new}(\fg)\to Y(\fg)$ such that
\begin{center}
$\phi(h_i^{(1)}) = h_i$, \qquad  $\phi(h_i^{(2)}) = J(h_i) - v_i$, \\
$\phi(e_i^{(1)}) = x_{\alpha_i^+}$, \qquad $\phi(e_i^{(2)}) = J(x_{\alpha_i}^+) - w_i^+$, \\
$\phi(f_i^{(1)}) = x_{\alpha_i^-}$, \qquad $\phi(f_i^{(2)}) = J(x_{\alpha_i}^-) - w_i^-.$
\end{center}
Here $$v_i = \dfrac{1}{4} \sum_{\alpha \in \Phi^+} (\alpha, \alpha_i) \{x_{\alpha}^+, x_{\alpha}^-\} - \dfrac{1}{2} h_i^{2}, $$
$$w_i^\pm = \pm\dfrac{1}{4}\sum_{\alpha \in \Phi^+} \{[x_i^{\pm}, x_{\alpha}^{\pm}], x_{\alpha}^{\mp}\} - \frac{1}{4}\{x_i^{\pm}, h_i\},$$
$$\{x_{\alpha}, x_{-\alpha}\} = x_{\alpha} x_{-\alpha} + x_{-\alpha} x_{\alpha}.$$

\end{thm}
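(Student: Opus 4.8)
\emph{Producing the homomorphism.} The plan is to realize $\phi$ by exhibiting, inside $Y(\fg)$, elements $H_i^{(r)},E_i^{(r)},F_i^{(r)}$ ($i\in\Delta$, $r\geqslant 1$) that satisfy all the defining relations of $Y_{new}(\fg)$, and then to upgrade the resulting homomorphism to an isomorphism by comparing the two natural filtrations. Using the canonical embedding $\fg\hookrightarrow Y(\fg)$, put $H_i^{(1)}=h_i$, $E_i^{(1)}=x_{\alpha_i}^+$, $F_i^{(1)}=x_{\alpha_i}^-$, and $H_i^{(2)}=J(h_i)-v_i$, $E_i^{(2)}=J(x_{\alpha_i}^+)-w_i^+$, $F_i^{(2)}=J(x_{\alpha_i}^-)-w_i^-$ as in the statement; since $v_i,w_i^\pm\in U(\fg)\subset Y(\fg)$ these are genuine elements of $Y(\fg)$. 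For $r\geqslant 2$ define the remaining elements recursively by the identities that the relations of $Y_{new}(\fg)$ force:
\[
E_i^{(r+1)}:=\tfrac{1}{2d_i}\bigl([H_i^{(2)},E_i^{(r)}]-d_i(H_i^{(1)}E_i^{(r)}+E_i^{(r)}H_i^{(1)})\bigr),
\]
the analogous formula for $F_i^{(r+1)}$ (with an overall sign change), and $H_i^{(r)}:=[E_i^{(r)},F_i^{(1)}]$. By Remark~\ref{gennew} only the six level-$\leqslant 2$ families are really needed to generate, so this recursion is merely convenient bookkeeping of the higher generators.

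\emph{Checking the relations.} One must then verify that these elements satisfy \emph{every} defining relation of $Y_{new}(\fg)$; this is the substance of the theorem. The relations involving only level-$\leqslant 2$ generators form a finite list of identities in $Y(\fg)$ to be checked directly from the $J$-presentation --- here the cubic Drinfeld relation $[J(x),[J(y),z]]-[x,[J(y),J(z)]]=\sum_{\lambda,\mu,\nu}(\cdots)\{x_\lambda,x_\mu,x_\nu\}$ (and, for $\fg\simeq\fsl_2$, also the last defining relation of $Y(\fg)$) is used, and the precise shape of $v_i,w_i^\pm$ is exactly what makes the Cartan relation $[H_i^{(2)},H_j^{(2)}]=0$ come out. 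The relations with unbounded superscripts --- the deformed commutators of the $h$'s with the $e$'s and $f$'s, of the $e$'s among themselves, of the $f$'s among themselves, and the Serre relations --- are then deduced from the level-$\leqslant 2$ ones by induction on the superscripts, propagating with the adjoint action of $H_i^{(1)}$ and the recursion above. This yields a well-defined algebra homomorphism $\phi:Y_{new}(\fg)\to Y(\fg)$ with the asserted values on generators.

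\emph{Bijectivity.} Filter $Y(\fg)$ by $\deg\fg=0$, $\deg J(\fg)=1$, so that $\gr Y(\fg)\cong U(\fg[t])$ (a standard property of the $J$-presentation), and filter $Y_{new}(\fg)$ by assigning a level-$r$ generator degree $r-1$, so that $\gr Y_{new}(\fg)\cong U(\fg[t])$ by the PBW theorem stated above, with $\fg t^0$ the common degree-$0$ part. Since $v_i,w_i^\pm$ have filtration degree $0$, the map $\phi$ is filtered, and its symbol $\gr\phi:U(\fg[t])\to U(\fg[t])$ fixes $\fg t^0$ pointwise and sends $e_i^{(r)},f_i^{(r)},h_i^{(r)}$ to $x_{\alpha_i}^+t^{r-1},x_{\alpha_i}^-t^{r-1},h_it^{r-1}$ (the recursion above degenerates, on symbols, to the bracket in $\fg[t]$). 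As $\fg t^0$ together with the elements $x_{\alpha_i}^+t^{r-1}$ ($i\in\Delta$, $r\geqslant1$) generate $U(\fg[t])$ and $\gr\phi$ fixes them, $\gr\phi=\mathrm{id}$; hence $\phi$ is injective, and it is surjective because its image contains $\fg$ and all $J(x)$, which generate $Y(\fg)$. Therefore $\phi$ is an isomorphism.

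\emph{Main obstacle.} The one genuinely hard part is the verification of the unbounded-superscript relations of $Y_{new}(\fg)$, above all the Serre relations $\operatorname{sym}[e_i^{(r_1)},[\cdots[e_i^{(r_N)},e_j^{(s)}]\cdots]]=0$ with $N=1-a_{ij}$: these are invisible in the $J$-presentation and must be extracted by a somewhat intricate induction resting on the lowest-level case, which in turn uses the higher defining relations of $Y(\fg)$. The finite level-$\leqslant 2$ computations, the associated-graded comparison, and surjectivity are routine once this is done; the complete argument is carried out in \cite{GRW}.
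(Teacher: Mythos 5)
The paper offers no proof of this statement: Theorem~\ref{phi} is quoted from \cite{GRW} and the text immediately passes to using the identification $Y(\fg)\simeq Y_{new}(\fg)$, so there is no in-paper argument to compare yours against. Your outline --- define the images of the level-$\leq 2$ generators, recover the higher ones from the recursion forced by the relations, verify the defining relations of $Y_{new}(\fg)$ inside the $J$-presentation, and conclude bijectivity by passing to associated graded algebras where both sides become $U(\fg[t])$ --- is the standard strategy and is essentially how the cited reference proceeds, and the recursions you write down (for $E_i^{(r+1)}$ from $[H_i^{(2)},E_i^{(r)}]$ and $H_i^{(r)}=[E_i^{(r)},F_i^{(1)}]$) are the correct ones. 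Two caveats. First, the entire substance of the theorem --- the verification of the relations, above all the commutativity of the $H_i^{(r)}$ and the Serre relations --- is deferred rather than carried out, so as a self-contained proof this remains a sketch. Second, the filtration step quietly invokes the Poincar\'e--Birkhoff--Witt theorem for $Y(\fg)$ in the $J$-presentation (that the canonical surjection $U(\fg[t])\to\gr Y(\fg)$ is injective); this is itself a nontrivial theorem whose usual proofs go through one of the other presentations of the Yangian, so one must take care that the argument is not circular --- either cite an independent proof (Levendorskii) or rearrange the logic so that only surjectivity of $U(\fg[t])\to\gr Y(\fg)$ is used together with the PBW basis of $Y_{new}(\fg)$.
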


Throughout the paper the Yangian $Y(\fg)$ is identified with $Y_{new}(\fg)$ via the isomorphism of Theorem 2.7.

Following Wendlandt \cite{wend} we will consider the RTT Yangian $Y_V(\fg)$ for the Lie algebra $\fg$, where $V = \bigoplus_i V(\omega_i, 0)$ is the sum of the fundamental representations of the Yangian $Y(\fg)$.

More precisely, let $\underline{\lambda} = \{\lambda_{i,r} \, | \, i = 1, \ldots, n, r \geqslant 0, \lambda_{i,r} \in \bc \}$  be a set of complex numbers, $V$ be a representation of $Y(\fg)$. Then the $\underline{\lambda}$-weight space is by definition the subspace of $V$
$$\{ v \in V \, | \, h_i^{(r)} v = \lambda_{i,r} v \  \text{for all} \ i,r \}.$$
The $Y(\fg)$-module called highest weight module, if there exist $w \in V$ such that $e_i^{(r)} \cdot w = 0$ for all $i,r$ and $V = Y(\fg) \cdot w$.
Using the technique of Verma modules one can construct the irreducible module $V(\underline{\lambda})$ of the highest weight $\underline{\lambda}$.
The following theorem is well-known.
\begin{thm}
The irreducible $Y(\fg)$-module $V(\underline{\lambda})$ is finite dimensional iff there exist polynomials $P_i(u) \in \bc[u]$ such that
$$\dfrac{P_i(u+d_i)}{P_i(u)} = 1 + \sum_{r=0}^\infty \lambda_{i,r} u^{-r-1} $$
in the sense that the right-hand side is the Laurent expansion of the left-hand side at $u = \infty$.
\end{thm}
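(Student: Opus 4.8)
\emph{This is Drinfeld's classification} of the finite-dimensional irreducible representations of the Yangian in the new realization (\cite{drin2}); complete proofs for arbitrary $\fg$ are due to Chari--Pressley and appear also in Molev's book, so I only outline the argument. The plan has three steps: (i) reduce the ``only if'' part to $\fsl_2$ using the rank one subalgebras attached to the simple roots; (ii) settle the $\fsl_2$ case directly; (iii) prove the ``if'' part by realizing $V(\underline\lambda)$ inside a tensor product of fundamental representations of $Y(\fg)$. For step (i), fix $i\in\Delta$. The defining relations of $Y_{new}(\fg)$ that involve only $e_i^{(r)},f_i^{(r)},h_i^{(r)}$ are exactly the defining relations of $Y(\fsl_2)$ with the formal parameter rescaled by $d_i$ (the Serre relation at $i$ is vacuous because $1-a_{ii}=-1$), so there is an algebra homomorphism $\varphi_i\colon Y(\fsl_2)\to Y(\fg)$ with $\varphi_i(e^{(r)})=e_i^{(r)}$, and similarly for $f$ and $h$; this rescaling by $d_i$ is precisely what turns the shift by $1$ in the $\fsl_2$ answer below into the shift by $d_i$ in the statement. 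If $V(\underline\lambda)$ is finite dimensional with highest weight vector $w$, then $\varphi_i(Y(\fsl_2))\cdot w$ is a finite-dimensional highest weight $Y(\fsl_2)$-module whose nonzero irreducible quotient has highest weight series $1+\sum_{r\ge0}\lambda_{i,r}u^{-r-1}$, as this series is determined by the action on $w$ and $w$ has nonzero image in the quotient. Hence ``only if'' follows from the $\fsl_2$ case.

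\emph{The $\fsl_2$ case.} Write $\mu(u)=1+\sum_{r\ge0}\lambda_ru^{-r-1}$ and let $w$ be a highest weight vector. For ``only if'' I would argue as follows: if $\dim V(\underline\lambda)<\infty$, then all the vectors $f^{(r)}w$ with $r\ge1$ lie in a single weight space, which they moreover span, so they satisfy a nontrivial linear relation; applying $e^{(1)}$ to this relation repeatedly and using $[e^{(1)},f^{(r)}]=h^{(r)}$ together with the $h$--$f$ relations should show that the generating series $\sum_{r\ge1}(f^{(r)}w)u^{-r}$ is, in this finite-dimensional weight space, rational with polynomial denominator $P(u)\in\bc[u]$, and that the recursion this imposes on the $\lambda_r$ is equivalent to $\mu(u)P(u)=P(u+1)$. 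For ``if'', given such a $P$, factor $P(u)=c\prod_k(u-a_k)$ and form the tensor product $\bigotimes_kV(\omega_1,a_k)$ of two-dimensional evaluation modules; the cyclic submodule generated by the tensor product of highest weight vectors is a highest weight module with series $\mu(u)$, so $V(\underline\lambda)$ is its irreducible quotient and is finite dimensional.

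\emph{The ``if'' part in general.} Given polynomials $P_i(u)=c_i\prod_k(u-a_{i,k})$ satisfying the condition, form $W=\bigotimes_{i,k}V(\omega_i,a_{i,k})$, the tensor product of the corresponding fundamental representations of $Y(\fg)$; these exist (their existence is part of the classical theory and is used in this paper already, with $V=\bigoplus_iV(\omega_i,0)$) and have Drinfeld polynomial $u-a$ at node $i$ and $1$ at the other nodes. Using the coproduct of $Y(\fg)$ one checks that the submodule of $W$ generated by the tensor product of the highest weight vectors is a highest weight module, whose series at node $i$ equals $\prod_k\bigl((u-a_{i,k}+d_i)/(u-a_{i,k})\bigr)=P_i(u+d_i)/P_i(u)$; its irreducible quotient is therefore $V(\underline\lambda)$, which is finite dimensional as a subquotient of $W$.

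\emph{Main obstacle.} The one genuinely delicate point is the ``only if'' direction in the $\fsl_2$ case: turning the bare finiteness of $\dim V(\underline\lambda)$ into rationality of $\mu(u)$ with numerator and denominator of the prescribed form $P(u+1)/P(u)$. This requires careful manipulation of generating series in $Y(\fsl_2)$ and the identification of the appropriate finitely generated $\bc[u]$-module structure on $\operatorname{span}\{f^{(r)}w\}$, and here I would follow Chari--Pressley (or Molev) in detail. The reduction (i) is pure relation-chasing, and the construction (iii) is standard once the fundamental representations and the coproduct are available, so neither creates real difficulty.
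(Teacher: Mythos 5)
The paper does not prove this statement: it is introduced with the words ``The following theorem is well-known'' and used as a black box (it is Drinfeld's classification of finite-dimensional irreducible $Y(\fg)$-modules, with full proofs due to Chari--Pressley), so there is no internal proof to compare yours against. Your outline is the standard argument and is essentially sound: the reduction to rank one via the homomorphisms $Y(\fsl_2)\to Y(\fg)$ attached to simple roots (with the rescaling by $d_i$ accounting for the shift $u\mapsto u+d_i$), the $\fsl_2$ analysis of the weight space spanned by the $f^{(r)}w$, and the ``if'' direction via tensor products of fundamental representations are exactly the steps in Chari--Pressley and in Molev's book. You also correctly identify the genuinely delicate point, namely extracting the rationality $\mu(u)=P(u+1)/P(u)$ from finite-dimensionality in the $\fsl_2$ case. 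The one place where your sketch leans on more than you acknowledge is step (iii): the existence of the finite-dimensional fundamental representations $V(\omega_i,a)$ for general $\fg$ is itself a nontrivial part of the ``if'' direction (it is not a consequence of the $\fsl_2$ case plus tensoring), and the claim that the cyclic submodule generated by the tensor product of highest weight vectors is a highest weight module whose series at node $i$ is the product of the factors requires a concrete lemma about the coproduct modulo terms annihilating highest weight vectors; both are standard but should be cited rather than waved at. Given that the paper itself only cites the result, your proposal is an acceptable, correctly structured substitute for a proof, provided those two ingredients are explicitly attributed to the literature.
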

\begin{defn}
A finite dimensional irreducible $Y(\fg)$-module is called fundamental if the associated polynomials are given by
$$P_j(u) = \begin{cases}
1, & j \ne i, \\
u - a, & j = i,
\end{cases}$$
for some $i=1, \ldots n$. We denote it by $V(\omega_i,a)$.
\end{defn}

We consider the representation $V = \bigoplus_i V(\omega_i, 0)$ of $Y(\fg)$. Let $\rho:Y(\fg)\to\End(V)$ be the homomorphism corresponding to this representation.

Note that the restriction of $V(\omega_i,0)$ to $\fg$ decomposes as
$$V(\omega_i,0) = V_{\omega_i} \oplus \bigoplus_{\mu < \omega_i} V_{\mu}^{\oplus k_{\mu}}$$
Here $V_{\mu}$ is the irreducible representation of $\fg$ of highest weight $\mu$ and $\mu < \omega_i$ means that $\omega_i - \mu$ is a sum of positive roots.

There is the unique formal series $\hat R(u) \in (Y(\fg)\otimes Y(\fg))[[u^{-1}]]$ with certain properties called the {\it universal $R$-matrix}. For the definition, see \cite[Theorem 3.4]{wend}.
Let $R(u) = (\rho \otimes \rho)\hat R(-u)$. We fix a basis of $V$ and regard $R(u)\in \End(V)^{\otimes 2}[[u^{-1}]]$ as a matrix in this basis.

\begin{defn}
\label{extyangian}
The extended Yangian $X_V(\fg)$ is a unital associative algebra generated by the elements
$t_{ij}^{(r)},  1 \leq i, j \leq \dim V; r \geq 1$  with the defining relations 
$$R(u-v) T_1(u) T_2(v) = T_2(v) T_1(u) R(u-v) \quad \text{in} \ \End(V)^{\otimes 2} \otimes X_V(\fg)[[u^{-1}, v^{-1}]].$$
Here $$T(u) = [t_{ij}(u)]_{i,j = 1, \ldots, \dim V} \in \End V \otimes X_V(\fg),$$
$$t_{ij}(u) = \delta_{ij} + \sum_r t_{ij}^{(r)} u^{-r}$$ and $T_1(u)$ (resp. $T_2(u)$) is the image of $T(u)$ in the first (resp. second) copy of $\End V$.

\end{defn}

\begin{defn}
The Yangian $Y_V(\fg)$ is a unital associative algebra generated by the elements
$t_{ij}^{(r)},  1 \leq i, j \leq \dim V; r \geq 1$  with the defining relations 
$$R(u-v) T_1(u) T_2(v) = T_2(v) T_1(u) R(u-v) \quad \text{in} \ \End(V)^{\otimes 2} \otimes Y_V(\fg)[[u^{-1}, v^{-1}]],$$
$$S^2(T(u))= T(u+\frac{1}{2}c_{\fg}),$$
where $S(T(u))=T(u)^{-1}$ is the antipode map and
$c_{\fg}$ is the value of the Casimir element of $\fg$ on the adjoint representation.

\end{defn}

\begin{thm}(see \cite{wend})
\label{iso}
There is an isomorphism $\psi:Y_V(\fg)\to Y(\fg)$ such that
$$\psi: T(u) \mapsto (\rho \otimes 1) \hat R(-u).$$
\end{thm}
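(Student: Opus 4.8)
The statement to prove is Theorem \ref{iso}: there is an isomorphism $\psi:Y_V(\fg)\to Y(\fg)$ sending $T(u)$ to $(\rho\otimes 1)\hat R(-u)$.

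My plan:

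\bigskip

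The plan is to follow Wendlandt's strategy \cite{wend}, reducing the claim to a handful of structural facts about the universal $R$-matrix. First I would define the candidate map on generators: since $\hat R(u)\in (Y(\fg)\otimes Y(\fg))[[u^{-1}]]$ and $\rho:Y(\fg)\to\End(V)$ is a representation, the element $\mathcal{T}(u):=(\rho\otimes 1)\hat R(-u)$ lies in $\End(V)\otimes Y(\fg)[[u^{-1}]]$, and writing it in the fixed basis of $V$ gives matrix entries $\mathcal{T}_{ij}(u)=\delta_{ij}+\sum_{r\geq 1}\mathcal{T}_{ij}^{(r)}u^{-r}$ with $\mathcal{T}_{ij}^{(r)}\in Y(\fg)$ (the constant term is $\mathrm{Id}$ because $\hat R(u)=1\otimes 1+O(u^{-1})$). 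I would then declare $\psi(t_{ij}^{(r)}):=\mathcal{T}_{ij}^{(r)}$.

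\bigskip

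\emph{Step 1: $\psi$ is a well-defined algebra homomorphism $Y_V(\fg)\to Y(\fg)$.} This means checking that the images $\mathcal{T}_{ij}^{(r)}$ satisfy the defining relations of $Y_V(\fg)$. The $RTT$ relation $R(u-v)\mathcal{T}_1(u)\mathcal{T}_2(v)=\mathcal{T}_2(v)\mathcal{T}_1(u)R(u-v)$ for the image is exactly the image under $(\rho\otimes\rho\otimes 1)$ of the quasi-triangularity/cabling identity for the universal $R$-matrix, namely $\hat R_{13}(u-v)\hat R_{23}(-v)$-type intertwining relations (the Yang--Baxter-like equation $\hat R_{12}(u)(\Delta\otimes 1)\hat R_{13}...$ recast appropriately); these are part of the defining properties of $\hat R(u)$ recalled in \cite[Theorem 3.4]{wend}. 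The second relation $S^2(\mathcal{T}(u))=\mathcal{T}(u+\tfrac12 c_{\fg})$ follows from the known behaviour of the universal $R$-matrix under the square of the antipode and under the grading shift automorphism of $Y(\fg)$ (again a property established in \cite{wend}). So Step 1 is essentially a transcription of properties of $\hat R(u)$, using that $\rho$ is an algebra map.

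\bigskip

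\emph{Step 2: $\psi$ is surjective.} Here I would use that $V=\bigoplus_i V(\omega_i,0)$ is a faithful representation of $Y(\fg)$ containing all the fundamental representations, so the matrix entries $\mathcal{T}_{ij}^{(r)}$ generate enough of $Y(\fg)$. Concretely, one extracts from the lowest Fourier component the action of $\fg\subset Y(\fg)$ on $V$, and from the next component the images $J(x)$ for $x\in\fg$, recovering the generators $\{x,J(x)\}$ of $Y(\fg)$ in Drinfeld's presentation (or equivalently, by Remark \ref{gennew}, the generators $e_i^{(1)},f_i^{(1)},h_i^{(1)},h_i^{(2)}$ of $Y_{new}(\fg)$); faithfulness of $V$ guarantees no information is lost. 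The key input is that the leading term of $\hat R(-u)$ beyond $1\otimes 1$ is $u^{-1}\Omega+O(u^{-2})$ with $\Omega$ the Casimir tensor, so $\mathcal{T}^{(1)}$ encodes $\fg$-action, and the $u^{-2}$ term encodes the $J$-generators up to lower-order corrections.

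\bigskip

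\emph{Step 3: $\psi$ is injective.} For this the cleanest route is a dimension/PBW count: both $Y_V(\fg)$ and $Y(\fg)$ are filtered algebras whose associated graded pieces are computed (in \cite{wend}), and one shows $\psi$ is compatible with the filtrations and induces a surjection on the associated graded that is an isomorphism by comparing Poincaré series (using the PBW theorem, Theorem 2.4, for $Y_{new}(\fg)\cong Y(\fg)$, and Wendlandt's PBW theorem for $Y_V(\fg)$). A surjective filtered map inducing an isomorphism on graded algebras is itself an isomorphism, which gives injectivity and completes the proof.

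\bigskip

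The main obstacle I expect is Step 1 for the second relation and, more substantially, verifying the $RTT$ relation: one must correctly match the normalization conventions (the shift $\hat R(-u)$ versus $\hat R(u)$, the spectral parameter sign in $R(u-v)$, and the placement of the shift $\tfrac12 c_{\fg}$) so that the abstract identities satisfied by $\hat R(u)$ in \cite[Theorem 3.4]{wend} produce exactly Definitions of $X_V(\fg)$ and $Y_V(\fg)$; this is bookkeeping-heavy but not conceptually deep. The genuinely nontrivial ingredient — the construction of $\hat R(u)$ and its properties — is imported from \cite{wend}, so the theorem here is really a packaging statement, and I would phrase the proof as "this is \cite[...]{wend}; we recall the argument" with the three steps above as the skeleton.
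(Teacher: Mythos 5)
This theorem is stated in the paper with no proof at all: it is imported verbatim from Wendlandt \cite{wend}, so there is no internal argument to compare your proposal against. Your three-step outline (well-definedness of $T(u)\mapsto(\rho\otimes 1)\hat R(-u)$ from the defining properties of the universal $R$-matrix, surjectivity by extracting $\fg$ and the $J$-generators from the first two Fourier coefficients, injectivity by a filtered/PBW size comparison) is a faithful reconstruction of the strategy of the cited reference, and it is consistent with how the paper actually uses the result later. One remark: in \cite{wend} the injectivity is obtained somewhat differently, by first building the map on the extended Yangian $X_V(\fg)$, proving the decomposition $X_V(\fg)\simeq Y_V(\fg)\otimes Z$ with $Z$ a polynomial centre (the paper's Theorem \ref{tensor}), and checking that the extra relation $S^2(T(u))=T(u+\tfrac12 c_{\fg})$ cuts out exactly the complement of the centre; your PBW/filtration comparison is instead essentially the argument the authors themselves carry out later in Proposition \ref{filtr}, so it is a legitimate alternative route. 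Either way, since the paper treats the statement as a black box from \cite{wend}, your proposal is an acceptable (if necessarily schematic) account of it.
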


\begin{thm}(see \cite[Theorem 7.3]{wend})
\label{tensor}
Extended Yangian $X_V(\fg)$ is isomorphic to the tensor product $Y_V(\fg) \otimes \bc[x_1^{(r)}, \ldots, x_k^{(r)}]$ where $r \in \bz_{>0}$, $k = \dim \End_{Y(\fg)} V$.
\end{thm}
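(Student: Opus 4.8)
The plan is to produce inside $X_V(\fg)$ a free polynomial central subalgebra of the correct size, to split off the subalgebra $Y_V(\fg)$ as a complement, and to check that multiplication realizes $X_V(\fg)$ as the tensor product.

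First I would construct the central elements. Since the fundamental modules $V(\omega_i,0)$ are pairwise non-isomorphic irreducible $Y(\fg)$-modules, the module $V=\bigoplus_{i=1}^{n}V(\omega_i,0)$ is multiplicity free, so $\End_{Y(\fg)}V\cong\bc^{n}$ and $k=n$; write $P_i\in\End V$ for the projector onto the $i$-th summand. For each $i$ one obtains from the $T$-matrix a series $z_i(u)\in 1+u^{-1}X_V(\fg)[[u^{-1}]]$ whose coefficients $z_i^{(r)}$ are central in $X_V(\fg)$, defined as the quantum contraction of $T(u)$ on the $i$-th block --- that is, composing $T(u)$ with its transpose--inverse relative to the canonical pairing of $V(\omega_i,0)$ with its dual and shifting the spectral parameter by $\frac{1}{2}c_{\fg}$. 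This is the uniform analogue, expressed through the universal $R$-matrix $\hat R(u)$, of the quantum determinant for $\fgl_n$ and of the Sklyanin-type central elements in classical types; centrality of the $z_i^{(r)}$ follows from the RTT relation together with the crossing and fusion properties of $R(u)=(\rho\otimes\rho)\hat R(-u)$. Using the standard ascending filtration on $X_V(\fg)$ and the PBW theorem for the extended Yangian \cite{wend}, one computes the principal symbols of the $z_i^{(r)}$ and checks that they are algebraically independent; hence they generate a free polynomial subalgebra $\mathcal{Z}\cong\bc[x_i^{(r)}\mid 1\le i\le n,\ r\ge1]$ of the centre of $X_V(\fg)$.

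Next I would split off $\mathcal Z$. Choose the central, block-scalar series $f(u)=\sum_{i}f_i(u)P_i$ with each $f_i(u)\in 1+u^{-1}\mathcal{Z}[[u^{-1}]]$ an appropriate power of $z_i(u)$, arranged so that $\tilde T(u):=f(u)^{-1}T(u)$ still satisfies the RTT relation (automatic, as $f(u)$ commutes with $R(u)$) and in addition satisfies $S^2(\tilde T(u))=\tilde T(u+\frac{1}{2}c_{\fg})$. The latter is possible because in $X_V(\fg)$ one has an identity $S^2(T(u))=z(u)\,T(u+\frac{1}{2}c_{\fg})$ with $z(u)$ a central series built from the $z_i(u)$, and $f$ is tailored to cancel precisely this central defect. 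Then $T(u)\mapsto\tilde T(u)$ defines a homomorphism $\iota\colon Y_V(\fg)\to X_V(\fg)$, injective because the composite of $\iota$ with the projection $X_V(\fg)\twoheadrightarrow Y_V(\fg)$ is the identity (the images of the $z_i(u)$ in $Y_V(\fg)\cong Y(\fg)$ are trivial, the centre of $Y(\fg)$ being reduced to scalars). The multiplication map $\mu\colon\mathcal{Z}\otimes Y_V(\fg)\to X_V(\fg)$, $z\otimes y\mapsto z\,\iota(y)$, is surjective since $t_{ij}(u)=f(u)\,\tilde t_{ij}(u)$ recovers every generator of $X_V(\fg)$ from $\mathcal Z$ and $\iota(Y_V(\fg))$. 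Injectivity of $\mu$ follows by passing to associated graded and comparing Hilbert series: the symbols of the $z_i^{(r)}$ and of the $\tilde t_{ij}^{(r)}$ (which coincide with those of the $t_{ij}^{(r)}$, the correction being of strictly lower filtration degree) assemble, via the PBW theorems for $X_V(\fg)$ \cite{wend} and for $Y_V(\fg)\cong Y(\fg)\cong Y_{new}(\fg)$ (Theorems \ref{iso}, \ref{phi} and the PBW theorem for $Y_{new}(\fg)$), into the data exhibiting $\gr X_V(\fg)$ as $\gr\mathcal{Z}\otimes\gr Y_V(\fg)$. Hence $X_V(\fg)\cong\mathcal{Z}\otimes Y_V(\fg)\cong\bc[x_i^{(r)}\mid 1\le i\le n,\ r\ge1]\otimes Y_V(\fg)$.

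I expect the main obstacle to be the construction of the central series $z_i(u)$ and the precise identity $S^2(T(u))=z(u)\,T(u+\frac{1}{2}c_{\fg})$ in $X_V(\fg)$ --- that is, measuring exactly the extent to which $X_V(\fg)$ overshoots $Y_V(\fg)$ and verifying that this overshoot is central and polynomially free. Once that input is secured the splitting argument is formal; this analysis is carried out uniformly in terms of the universal $R$-matrix in \cite{wend}.
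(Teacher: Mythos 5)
The paper does not prove this statement at all---it is imported verbatim from Wendlandt \cite[Theorem 7.3]{wend}---and your sketch is essentially an outline of Wendlandt's own argument: a block-scalar central series measuring the defect $S^2(T(u))=Z(u)\,T(u+\frac{1}{2}c_{\fg})$ in $X_V(\fg)$, a central rescaling of $T(u)$ that splits off a copy of $Y_V(\fg)$, and a PBW/associated-graded comparison yielding the tensor factorization, so the approach matches the (externally cited) proof. The one imprecision is that the splitting series $f_i(u)$ is not literally a power of $z_i(u)$ but the unique solution in $1+u^{-1}\mathcal{Z}[[u^{-1}]]$ of a difference equation in the spectral parameter (the analogue of $\qdet T(u)=d(u)d(u-1)\cdots d(u-n+1)$ in type $A$); this is harmless at the level of a sketch.
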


\begin{rem}
According to \cite{wend}, to define the RTT Yangian $Y_V(\fg)$, one can use any finite-dimensional representation $V$ of $Y(\fg)$ which is not a sum of trivial representations, and Theorems 2.12 and 2.13 are still valid.
\end{rem}

\subsection{The algebra $\mathcal{O}(G_1[[t^{-1}]])$}

Note that since $G$ is a complex simple Lie group, both $G$ and $\tilde G$ are complex algebraic groups.
 
\begin{defn}
Let $V$ be a representation of $\tilde G$, $w \in V, \beta \in V^*$. The matrix entry is the function on $\tilde G$ given by $\Delta_{\beta,w}(g) = \beta(g \cdot w)$.
\end{defn}

According to Peter--Weyl theorem, the ring $\mathcal{O}(\tilde G)$ is spanned by matrix entries of all irreducible finite dimensional representations of $\tilde G$.

Let $G_1[[t^{-1}]]$ be the first congruence subgroup of the group $G[[t^{-1}]]$ of $\bc[[t]]$-points of $G$, i.e. the kernel of the homomorphism $G[[t^{-1}]]\to G$. Note that $G_1[[t^{-1}]]$ depends only on the formal neighborhood of $e\in G$. In particular the map $\tilde G \to G$ induces an isomorphism $\tilde G_1[[t^{-1}]] \tilde{\to} G_1[[t^{-1}]]$ .  

Let $\mathcal{O}(G_1[[t^{-1}]])$ be the ring of complex valued (polynomial) functions on $G_1[[t^{-1}]]$. From Peter-Weyl Theorem we see that the ring $\mathcal{O}(G_1[[t^{-1}]])$ is generated by Fourier components of the corresponding matrix entries of  $G_1[[t^{-1}]])$. More precisely, let $V$ be a representation of $\tilde G$. To any $w \in V$ and $\beta \in V^{*}$ we assign the $\bc$-valued functions $\Delta_{\beta,w}^{(s)}$ on $G_1[[t^{-1}]]$ determined as $\langle\beta,g(t)\cdot w\rangle=1+\sum\limits_{s=1}^\infty\Delta_{\beta,w}^{(s)}(g(t))t^{-s}$. Then the elements $\Delta_{\beta,w}^{(s)}$ generate $\mathcal{O}(G_1[[t^{-1}]])$. 

\subsection{Filtration on $Y_{new}(\fg)$}
We define a filtration on $Y_{new}(\fg)$ by
$$\deg e_{\alpha}^{(r)} = \deg f_{\alpha}^{(r)} = \deg h_i^{(r)} = r.$$
By $Y_{new}^{(r)}$ we denote the $r$-th filtered component of $Y_{new}(\fg)$.
Note that from Theorem 2.4 it follows that this is indeed a filtration and $\gr Y_{new}(\fg)$ is commutative.

Let $v_i$ be the highest weight vector in $V_{\omega_i}$  and $w_i = f_i v_i$ be the ``pre-highest'' weight vector. Note that $v_i$ and $w_i$ are unique up to proportionality since the corresponding weight spaces are one-dimensional. Let $v_i^*, w_i^*\in V_{\omega_i}^*$ be the weight elements such that $\langle v_i^*,v_i\rangle=1$ and $\langle w_i^*,w_i\rangle=1$.
Consider $V_{\omega_i}$ as a representation of $\tilde G$.
Following \cite{kamn}, we define $ \Delta_{\omega_i, \omega_i}(u):=\Delta_{v_i^*, v_i}(u)$, $\Delta_{\omega_i, s_i \omega_i}(u):=\Delta_{w_i^*, v_i}(u)$ and $\Delta_{s_i\omega_i, \omega_i}(u):=\Delta_{v_i^*, w_i}(u)$. 
We consider the following generating series
$$H_i(u) = 1 + \sum_{r \geq 1} h_i^{(r)} u^{-r}, F_i(u) = \sum_{r \geq 1} f_i^{(r)} u^{-r}, E_i(u) = \sum_{r \geq 1 }e_i^{(r)} u^{-r}\in Y_{new}(\fg)[[u^{-1}]].$$

\begin{thm}(\cite[Theorem 5.13]{comult}, \cite{kamn})
\label{poison}
There is an isomorphism of Poisson algebras $\theta: \gr Y_{new}(\fg) \simeq \mathcal{O}(G_1[[t^{-1}]])$ such that
$$\theta(H_i(u)) = \prod_j \Delta_{\omega_j, \omega_j}(u)^{-a_{ji}}$$
$$\theta(F_i(u)) = d_i^{-1/2} \dfrac{\Delta_{\omega_i, s_i \omega_i}(u)}{\Delta_{\omega_i, \omega_i}(u)}$$
$$\theta(E_i(u)) = d_i^{-1/2} \dfrac{\Delta_{s_i \omega_i, \omega_i}(u)}{\Delta_{\omega_i, \omega_i}(u)}$$

\end{thm}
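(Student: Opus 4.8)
The plan is to obtain $\theta$ as the associated graded of the composite isomorphism $Y_{new}(\fg)\xrightarrow{\phi}Y(\fg)\xleftarrow{\psi}Y_V(\fg)$ of Theorems~\ref{phi} and~\ref{iso}, and to extract the explicit formulas from the Gauss decomposition of the $T$-matrix of $Y_V(\fg)$ in the representation $V=\bigoplus_i V(\omega_i,0)$.

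First I would track the filtrations. Using Remark~\ref{gennew} and the explicit formulas of Theorem~\ref{phi} (note that $v_i$ and $w_i^{\pm}$ are quadratic in $\fg$), one checks that $\phi$ carries the degree-$r$ filtration of $Y_{new}(\fg)$ to the filtration of $Y(\fg)$ defined by $\deg x=1$, $\deg J(x)=2$, and that $\psi$ carries the latter to the filtration $\deg t_{ij}^{(r)}=r$ of $Y_V(\fg)$, because $(\rho\otimes1)\hat R(-u)$ is homogeneous with its $u^{-r}$-coefficient of degree $r$. Now take the associated graded of the $RTT$ relation $R(u-v)T_1(u)T_2(v)=T_2(v)T_1(u)R(u-v)$. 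Since $R(u)=1-\Omega u^{-1}+O(u^{-2})$ and $[t_{ij}^{(p)},t_{kl}^{(q)}]$ has filtration degree at most $p+q-1$, its leading term is the rational $r$-matrix Poisson relation
\[
\{T_1(u),T_2(v)\}=\Bigl[\frac{\Omega}{u-v},\,T_1(u)T_2(v)\Bigr]
\]
on $\gr Y_V(\fg)$, and the PBW theorem for $Y_V(\fg)$ (\cite{wend}) shows there are no further relations. Hence $\gr Y_V(\fg)$ is precisely the coordinate ring of the congruence subgroup equipped with the rational $r$-matrix Poisson bracket, with $\bar t_{ij}^{(r)}$ corresponding to the Fourier components of the matrix coefficients of the action of $G_1[[t^{-1}]]$ on $V$; here one uses that $V$ restricted to $\fg$ contains every $V_{\omega_i}$, so that these matrix coefficients generate $\mathcal{O}(G_1[[t^{-1}]])$, and that the congruence subgroup depends only on the formal neighbourhood of $e$, so that passing through the simply-connected group is harmless.

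Composing the identification $\gr Y_V(\fg)\cong\mathcal{O}(G_1[[t^{-1}]])$ of the previous paragraph with $\gr(\psi^{-1}\circ\phi)$ now gives an isomorphism of graded Poisson algebras $\theta:\gr Y_{new}(\fg)\xrightarrow{\sim}\mathcal{O}(G_1[[t^{-1}]])$, and it remains to evaluate it on $H_i(u)$, $E_i(u)$, $F_i(u)$. Under $\psi^{-1}\circ\phi$ these Drinfeld series are expressed, via the Gauss decomposition of $T(u)$ restricted to the fundamental blocks $V(\omega_j,0)$, as ratios of quasi-minors of $T(u)$ formed from the highest and pre-highest weight vectors $v_j,w_j$ and their duals; this is the computation of \cite{kamn}. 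Passing to the associated graded turns the quasi-minors into genuine matrix coefficients, and the ratios become exactly $\prod_j\Delta_{\omega_j,\omega_j}(u)^{-a_{ji}}$, $d_i^{-1/2}\Delta_{\omega_i,s_i\omega_i}(u)\Delta_{\omega_i,\omega_i}(u)^{-1}$ and $d_i^{-1/2}\Delta_{s_i\omega_i,\omega_i}(u)\Delta_{\omega_i,\omega_i}(u)^{-1}$: the exponents $-a_{ji}$ come from the Cartan matrix relating $\alpha_i$ to the fundamental weights (equivalently, from the Drinfeld polynomials of the $V(\omega_j,0)$), and the constants $d_i^{-1/2}$ from $x_{\alpha_i}^{\pm}=-d_i^{1/2}e_i$ (resp.\ $f_i$). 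This determines $\theta$ completely, since by Remark~\ref{gennew} the leading symbols of the coefficients $h_i^{(1)},h_i^{(2)},e_i^{(1)},f_i^{(1)}$ of $H_i,E_i,F_i$ already Poisson-generate $\gr Y_{new}(\fg)$.

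The delicate point is the last step: the quasi-determinant bookkeeping in the specific representation $V=\bigoplus_i V(\omega_i,0)$ and the verification that all the constants and exponents are unchanged in the semiclassical limit. A more self-contained alternative would be to define $\theta$ on generators by the stated formulas and check well-definedness directly, by matching the $r$-matrix Poisson brackets of the three families of matrix coefficients $\Delta_{\omega_i,\omega_i},\Delta_{\omega_i,s_i\omega_i},\Delta_{s_i\omega_i,\omega_i}$ against the classical limits of the defining relations of $Y_{new}(\fg)$; this is a computation of comparable length, and it additionally requires a separate argument that these particular matrix coefficients Poisson-generate $\mathcal{O}(G_1[[t^{-1}]])$, so I would favour the route sketched above.
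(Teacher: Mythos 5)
First, a point of comparison: the paper does not actually prove this theorem --- it is imported from \cite[Theorem 5.13]{comult} and \cite{kamn} without proof, so there is no in-paper argument to match yours against. Judged on its own, your outline follows the right general lines (essentially the strategy of the cited references), but it has two genuine gaps. The first is the claim that the leading term of the $RTT$ relation together with ``the PBW theorem for $Y_V(\fg)$'' shows that $\gr Y_V(\fg)$ \emph{is} $\mathcal{O}(G_1[[t^{-1}]])$ with ``no further relations''. The symbols of the $t_{ij}^{(r)}$ are indexed by a basis of $\End V$ with $V=\bigoplus_i V(\omega_i,0)$, and $\dim\End V$ is far larger than $\dim\fg$; the corresponding matrix coefficients satisfy a large ideal of relations in $\mathcal{O}(G_1[[t^{-1}]])$ (those cutting out the image of $G_1[[t^{-1}]]$ inside $\End V[[t^{-1}]]$), so one must genuinely bound the filtered dimensions of $Y_V(\fg)$ from above. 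That is the content of the paper's Proposition~\ref{filtr}, which needs Wendlandt's decomposition $\End V=\fg\oplus\mathcal{E}\oplus\mathcal{E}_c\oplus\mathcal{F}$ and separate arguments that the non-$\fg$ components live in lower filtration degree. Note also that in this paper Lemma~\ref{t11}, Proposition~\ref{filtr} and Proposition~\ref{image} are all deduced \emph{using} Theorem~\ref{poison}, so within the paper's logical order your route is circular unless you re-establish those facts independently.

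The second gap is at the decisive computational step: the identification of the images of $H_i(u)$, $E_i(u)$, $F_i(u)$ as the stated ratios of matrix coefficients, with the exponents $-a_{ji}$ and the constants $d_i^{-1/2}$, is precisely the assertion being proved, and you dispose of it by writing ``this is the computation of \cite{kamn}''. That reduces the proof, at its critical point, to a citation of one of the two sources to which the theorem is already attributed. Your closing paragraph --- defining $\theta$ by the stated formulas and verifying the Poisson brackets and generation directly --- is the honest self-contained alternative, but you acknowledge that you have not carried it out. Either way, the argument as written is an outline of where a proof lives rather than a proof.
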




\begin{cor}\label{pgen}
The Poisson algebra $\mathcal{O}(G_1[[t^{-1}]])$ is generated by $\Delta_{\omega_i, \omega_i}^{(1)}$, $\Delta_{\omega_i, s_i\omega_i}^{(1)}$, $\Delta_{s_i\omega_i, \omega_i}^{(1)}$ and $\Delta_{\omega_i, \omega_i}^{(2)}$. Here superscripts $(1)$ and $(2)$ indicate taking the coefficients at $u^{-1}$ and $u^{-2}$ in the formal power series expansion.
\end{cor}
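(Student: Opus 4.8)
The plan is to reduce the statement, via the Poisson algebra isomorphism $\theta\colon \gr Y_{new}(\fg)\xrightarrow{\sim}\mathcal{O}(G_1[[t^{-1}]])$ of Theorem~\ref{poison}, to a graded refinement of Remark~\ref{gennew}: namely that $\gr Y_{new}(\fg)$ is generated \emph{as a Poisson algebra} by the principal symbols $\bar e_i^{(1)},\bar f_i^{(1)},\bar h_i^{(1)},\bar h_i^{(2)}$, $i=1,\dots,n$ (here $\bar x$ denotes the principal symbol of $x\in Y_{new}(\fg)$). Granting this, it remains only to compute $\theta$ on those four symbols and check that their images generate the same Poisson subalgebra of $\mathcal{O}(G_1[[t^{-1}]])$ as the four families $\Delta_{\omega_i,\omega_i}^{(1)}$, $\Delta_{\omega_i,s_i\omega_i}^{(1)}$, $\Delta_{s_i\omega_i,\omega_i}^{(1)}$, $\Delta_{\omega_i,\omega_i}^{(2)}$.

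For the graded refinement I would argue as in the proof of Theorem~2.4 while keeping track of the filtration. By Theorem~2.4 the algebra $\gr Y_{new}(\fg)$ is the polynomial algebra on the symbols of the PBW generators $e_\alpha^{(r)},h_i^{(r)},f_\alpha^{(r)}$; in particular these symbols generate it as a commutative algebra. From the defining relations of $Y_{new}(\fg)$ one extracts the exact identity
$$2d_i\,e_i^{(s+1)}=[h_i^{(2)},e_i^{(s)}]-d_i\bigl(h_i^{(1)}e_i^{(s)}+e_i^{(s)}h_i^{(1)}\bigr)$$
(obtained by eliminating $[h_i^{(1)},e_i^{(s+1)}]$ between two of the relations between $h$'s and $e$'s), and its $f$-analogue; moreover $[e_i^{(r)},f_i^{(1)}]=h_i^{(r)}$, $[e_{\hat\alpha}^{(r)},e_{\check\alpha}^{(1)}]=e_\alpha^{(r)}$, $[f_{\hat\alpha}^{(r)},f_{\check\alpha}^{(1)}]=f_\alpha^{(r)}$. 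Taking principal symbols in the appropriate graded components — using that in $\gr Y_{new}(\fg)$ a commutator of elements of filtration degrees $a$ and $b$ lands in degree $a+b-1$ and computes the Poisson bracket there, while the anticommutator $h_i^{(1)}e_i^{(s)}+e_i^{(s)}h_i^{(1)}$ has symbol $2\bar h_i^{(1)}\bar e_i^{(s)}$ in degree $s+1$ — one obtains
$$\bar e_i^{(s+1)}=\tfrac{1}{2d_i}\{\bar h_i^{(2)},\bar e_i^{(s)}\}-\bar h_i^{(1)}\bar e_i^{(s)},\qquad \bar h_i^{(r)}=\{\bar e_i^{(r)},\bar f_i^{(1)}\},\qquad \bar e_\alpha^{(r)}=\{\bar e_{\hat\alpha}^{(r)},\bar e_{\check\alpha}^{(1)}\},$$
and analogously for the $f$'s. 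Inducting on $s$, then along the recursion defining the $e_\alpha^{(r)},f_\alpha^{(r)}$, the Poisson subalgebra generated by $\bar e_i^{(1)},\bar f_i^{(1)},\bar h_i^{(1)},\bar h_i^{(2)}$ contains every $\bar e_\alpha^{(r)},\bar h_i^{(r)},\bar f_\alpha^{(r)}$, hence equals $\gr Y_{new}(\fg)$.

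For the last step, expand the formulas of Theorem~\ref{poison} in powers of $u^{-1}$, using that $\Delta_{\omega_j,\omega_j}(u)$ has constant term $1$ while $\Delta_{\omega_i,s_i\omega_i}(u)$ and $\Delta_{s_i\omega_i,\omega_i}(u)$ have vanishing constant term. This yields
$$\theta(\bar e_i^{(1)})=d_i^{-1/2}\Delta_{s_i\omega_i,\omega_i}^{(1)},\quad \theta(\bar f_i^{(1)})=d_i^{-1/2}\Delta_{\omega_i,s_i\omega_i}^{(1)},\quad \theta(\bar h_i^{(1)})=-\sum_j a_{ji}\Delta_{\omega_j,\omega_j}^{(1)},\quad \theta(\bar h_i^{(2)})=-\sum_j a_{ji}\Delta_{\omega_j,\omega_j}^{(2)}+Q_i,$$
with $Q_i$ a polynomial in the $\Delta_{\omega_j,\omega_j}^{(1)}$. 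Since the Cartan matrix $(a_{ij})$ is invertible, the Poisson subalgebra generated by the right-hand sides is exactly the one generated by $\Delta_{\omega_i,\omega_i}^{(1)},\Delta_{\omega_i,s_i\omega_i}^{(1)},\Delta_{s_i\omega_i,\omega_i}^{(1)},\Delta_{\omega_i,\omega_i}^{(2)}$; by the second paragraph this is $\theta(\gr Y_{new}(\fg))=\mathcal{O}(G_1[[t^{-1}]])$, which is the claim.

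The main obstacle is the filtration bookkeeping in the middle step: one must verify that every structural identity of $Y_{new}(\fg)$ used descends to the \emph{correct} graded component carrying the \emph{stated} symbol — that no degree is ``lost'', that the commutators genuinely compute Poisson brackets, and that the anticommutator contributes the indicated product. This is exactly where Theorem~2.4 is indispensable: the PBW property simultaneously makes $\gr Y_{new}(\fg)$ a polynomial algebra on the symbols of the PBW generators (so a single such symbol cannot equal a product of two of them) and makes it commutative (so the commutators drop one degree below the naive bound), and these two facts together force each identity to survive in graded form as written.
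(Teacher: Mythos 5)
Your proposal is correct and follows essentially the same route as the paper: reduce via the isomorphism $\theta$ of Theorem~\ref{poison} to the claim that the symbols of $e_i^{(1)}, f_i^{(1)}, h_i^{(1)}, h_i^{(2)}$ Poisson-generate $\gr Y_{new}(\fg)$, then match their $\theta$-images with the stated $\Delta$'s using invertibility of the Cartan matrix. The only difference is that the paper disposes of the Poisson-generation step by citing Remark~\ref{gennew}, whereas you supply the explicit graded identities justifying it; that added detail is consistent with the paper's argument.
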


\begin{proof}
Indeed, according to the Theorem \ref{poison}, the images of Drinfeld's new generators $h_i^{(1)}$, $e_i^{(1)}$, $f_i^{(1)}$ under $\theta$ are linear combinations of $\Delta_{\omega_j, \omega_j}^{(1)}$, $\Delta_{\omega_j, s_j\omega_j}^{(1)}$, $\Delta_{s_j\omega_j, \omega_j}^{(1)}$. Moreover, $\theta(h_i^{(2)})$ is a linear combination of $\Delta_{\omega_j, \omega_j}^{(2)}$ plus a quadratic-linear expression of $\Delta_{\omega_j, \omega_j}^{(1)}$. On the other hand, according to Remark~\ref{gennew} the images of $h_i^{(1)}$, $e_i^{(1)}$, $f_i^{(1)}$ and $h_i^{(2)}$ in $\gr Y_{new}(\fg)$ generate it as a Poisson algebra.  
\end{proof}

\subsection{Filtration on $Y_{V}(\fg)$} Let us define a filtration on $Y_V(\fg)$ setting
$$\deg \, t_{ij}^{(r)} = r.$$
By $Y_V^{(r)}$ we denote the $r$-th filtered component of $Y_V(\fg)$.
It is easy to see that $\gr Y_V(\fg)$ with respect to this grading is commutative.

Recall that the Casimir element can be represented as $\Omega =  \sum_a x_a \otimes x^a,$ where $x_a$ and $x^a$ are some dual bases of the Lie algebra $\fg$.
Let $e_i, i = 1, \ldots, \dim V$ and $e_i^*, i = 1, \ldots, \dim V $ be an arbitrary basis of $V$ and $V^{*}$ respectively. Let $\beta = \sum a_{i} e_i^*, w = \sum b_j e_j$ and put by definition
$$t_{\beta,w}(u) = \sum_i \sum_j a_i b_j t_{ij}(u).$$
By definition, put
$$\Omega_{\rho} = (\rho \otimes \rho)\Omega$$
Recall that $R(u) = Id - \Omega_{\rho}u^{-1} + \sum_{k \geqslant 2} R^{(k)} u^{-k}$.
\begin{lem}
\label{poisson}
$\gr Y_V(\fg)$ is a Poisson algebra with the Poisson structure defined by the following formula

$$\{t_{\beta_1 v_1}(u),t_{\beta_2 v_2}(v)\} = \dfrac{1}{v-u} \sum_a \left(t_{\beta_1, x_a v_1}(u) t_{\beta_2, x^a v_2}(v) - t_{x_a \beta_1, v_1}(u) t_{x^a \beta_2, v_2}(v)\right)$$
\end{lem}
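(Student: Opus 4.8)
The plan is to extract the Poisson bracket directly from the $RTT$ relation $R(u-v)T_1(u)T_2(v) = T_2(v)T_1(u)R(u-v)$, using the expansion $R(u) = \Id - \Omega_\rho u^{-1} + O(u^{-2})$. First I would recall the general mechanism: for a filtered algebra whose associated graded is commutative, the Poisson bracket of two classes $\bar a, \bar b$ with $a \in Y_V^{(p)}$, $b \in Y_V^{(q)}$ is the class in $Y_V^{(p+q-1)}/Y_V^{(p+q-2)}$ of $[a,b]$. Since $\deg t_{ij}^{(r)} = r$, the symbol of $t_{ij}(u)$ lives in degree-graded pieces componentwise, and the commutator $[t_{ij}(u), t_{kl}(v)]$ must be computed modulo terms of lower filtration degree.

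The key computation: rewrite the $RTT$ relation as
$$T_1(u)T_2(v) - T_2(v)T_1(u) = R(u-v)^{-1}\bigl(T_2(v)T_1(u)R(u-v) - R(u-v)T_2(v)T_1(u)\bigr) + \bigl(R(u-v)^{-1}-\Id\bigr)(\cdots),$$
or more cleanly, move $R$ to one side: $[T_1(u),T_2(v)] = T_2(v)T_1(u)R(u-v) - R(u-v)T_2(v)T_1(u)$ after multiplying by $R(u-v)^{-1}$ on the left and noting $R^{-1} = \Id + O((u-v)^{-1})$. Substituting $R(u-v) = \Id - \Omega_\rho(u-v)^{-1} + O((u-v)^{-2})$, the leading term of the commutator is $(u-v)^{-1}\bigl(\Omega_\rho\, T_2(v)T_1(u) - T_2(v)T_1(u)\,\Omega_\rho\bigr)$, i.e. $(u-v)^{-1}[\Omega_\rho\stackrel{\otimes}{,}T_2(v)T_1(u)]$. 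All higher-order terms in $R$ contribute commutators of strictly lower filtration degree (since each extra power of $(u-v)^{-1}$ lowers degree, and $T_2(v)T_1(u)$ has the same symbol as $T_1(u)T_2(v)$ in $\gr$), so they vanish in the associated graded. Then I would contract against arbitrary covectors $\beta_1, \beta_2 \in V^*$ and vectors $v_1, v_2 \in V$: writing $\Omega_\rho = \sum_a (\rho x_a)\otimes(\rho x^a)$, the action of $\Omega_\rho$ on the first tensor factor acting on $v_1$ (via $\rho x_a$) and on the second (via $\rho x^a$) produces exactly the four-term expression $\sum_a\bigl(t_{\beta_1,x_a v_1}(u)t_{\beta_2,x^a v_2}(v) - t_{x_a\beta_1,v_1}(u)t_{x^a\beta_2,v_2}(v)\bigr)$, after using that $x_a$ acting on $V^*$ is minus the transpose, and relabeling the dual-basis summation (the two ``crossed'' terms from $[\Omega_\rho, \cdot]$ give the $t_{\beta,xv}$ pieces and the $t_{x\beta,v}$ pieces with opposite sign). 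The overall sign and the factor $\tfrac{1}{v-u}$ come from $-(u-v)^{-1} = (v-u)^{-1}$.

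The main obstacle is bookkeeping rather than conceptual: one must carefully track (i) that the antipode relation $S^2(T(u)) = T(u + \tfrac12 c_\fg)$ imposed in $Y_V(\fg)$ (as opposed to the extended Yangian $X_V(\fg)$) does not alter the leading symbol of the commutator — it only shifts the spectral parameter, which is invisible in $\gr$; and (ii) the precise identification of how $\sum_a x_a \otimes x^a$ acts when one factor hits a vector and the other hits a covector, including the sign from the coadjoint action, so that the symmetry $\Omega_\rho = \sum_a \rho(x_a)\otimes\rho(x^a) = \sum_a \rho(x^a)\otimes\rho(x_a)$ is used correctly to match the stated formula. I would also verify (as a consistency check, not needed for the statement) that the bracket is antisymmetric and satisfies Jacobi, which follows automatically since it is the associated-graded bracket of an associative algebra, but the explicit four-term form makes antisymmetry under $(u,\beta_1,v_1)\leftrightarrow(v,\beta_2,v_2)$ a quick verification that the signs are right.
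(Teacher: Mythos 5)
Your proposal is correct and follows essentially the same route as the paper: rewrite the $RTT$ relation to isolate $[T_1(u),T_2(v)]$, observe that only the $\Omega_\rho(u-v)^{-1}$ term of $R(u-v)$ contributes to the leading symbol (all higher terms $R^{(k)}(u-v)^{-k}$ drop in the associated graded), and evaluate on $\beta_1,v_1,\beta_2,v_2$ to obtain the four-term formula with the sign flip $-(u-v)^{-1}=(v-u)^{-1}$. The paper's proof is just a terser version of the same computation.
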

\begin{proof}
Let us rewrite the defining $RTT$ relations in the following way:
\begin{eqnarray*}[T_1(u), T_2(v)] = \dfrac{1}{u-v}\left(\Omega_{\rho} T_1(u) T_2(v) - T_2(v)T_1(u) \Omega_{\rho}\right)  \\ +  \sum_{k \geqslant 2} \dfrac{1}{(u-v)^k} \left(T_2(v)T_1(u) R^{(k)} - R^{(k)} T_1(u) T_2(v)\right).
\end{eqnarray*}
Note that only the first summand gives a nonzero contribution into the Poisson bracket. 
Comparing the evaluation of LHS and RHS on $\beta_1,v_1,\beta_2,v_2$ we get the assertion.
\end{proof}

\subsection{Relation between two filtrations on $Y(\fg)$}
Let us fix $\omega_i$ and choose a basis of $V_{\omega_i}$ such that first basis vector is $v_i$ and second is $w_i$.
We use the notation $t_{11}^{(1)} = t_{v_i^* v_i}^{(1)}, t_{12}^{(1)} = t_{v_i^* w_i}^{(1)}, t_{21}^{(1)} = t_{w_i^* v_i}^{(1)}, t_{11}^{(2)} = t_{v_i^* v_i}^{(2)}$ in what follows. Slightly abusing notations, we denote the symbols of $t_{ij}^{(r)}$ in the associated graded $\gr Y_V(\fg)$ by the same letters $t_{ij}^{(r)}$. 
Here again superscripts $(1)$ and $(2)$ indicate taking the coefficient at $u^{-1}$ and $u^{-2}$ in the power series $t_{ij}(u)$.
We denote $\Delta_{\omega_i, \omega_i}(u)$ by $\Delta_{11}(u)$,
$\Delta_{\omega_i, s_i \omega_i}(u)$ by $\Delta_{12}(u)$ and $\Delta_{s_i\omega_i, \omega_i}(u)$ by $\Delta_{21}(u)$.

\begin{lem}
\label{t11}
For any $\omega_i$
the images of $t_{11}^{(1)}, t_{12}^{(1)}, t_{21}^{(1)}, t_{11}^{(2)}$ under the map $\gr (\phi^{-1} \circ \psi)$ in $\gr Y_{new}(\fg)$ are $\Delta_{11}^{(1)}, \Delta_{12}^{(1)}, \Delta_{21}^{(1)}, \Delta_{11}^{(2)}$ respectively.
\end{lem}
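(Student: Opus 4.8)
The plan is to compute the lowest-order term of $\psi(T(u))=(\rho\otimes 1)\hat R(-u)$ explicitly on the fundamental summand $V(\omega_i,0)$, between the distinguished weight covectors $v_i^*,w_i^*$ and weight vectors $v_i,w_i$, and then match the result against the formulas of Theorem~\ref{poison} under the identification $\gr Y_{new}(\fg)\cong\mathcal{O}(G_1[[t^{-1}]])$. Concretely, I would first recall that the universal $R$-matrix has the form $\hat R(u)=1+u^{-1}\Omega+O(u^{-2})$ (with $\Omega\in\fg\otimes\fg$ the Casimir), so that $\psi(T(u))=\rho(\cdot)$ applied leg-by-leg gives $t_{\beta,w}(u)=\langle\beta,w\rangle+u^{-1}\sum_a\langle\beta,x_a w\rangle\,x^a+O(u^{-2})$ in $Y(\fg)[[u^{-1}]]$, i.e. the degree-$1$ component $t_{\beta,w}^{(1)}$ equals $\sum_a\langle\beta,x_aw\rangle\,x^a\in\fg\subset Y(\fg)$. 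From this one reads off: $t_{11}^{(1)}=t_{v_i^*,v_i}^{(1)}$ is, up to the weight normalization, the image in $\fg$ of the Cartan projection acting on the highest weight line, hence a combination of the $h_j$; $t_{21}^{(1)}=t_{w_i^*,v_i}^{(1)}$ picks out the lowering operator $x_{\alpha_i}^-=-d_i^{1/2}f_i$ (since $w_i=f_iv_i$); and $t_{12}^{(1)}=t_{v_i^*,w_i}^{(1)}$ picks out the raising operator $x_{\alpha_i}^+=-d_i^{1/2}e_i$.

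Next I would transport these through $\phi^{-1}$. Since $\phi$ sends $h_i^{(1)}\mapsto h_i$, $e_i^{(1)}\mapsto x_{\alpha_i}^+$, $f_i^{(1)}\mapsto x_{\alpha_i}^-$, the elements just computed are, in $Y_{new}(\fg)$, explicit linear combinations of $h_j^{(1)}$, $e_i^{(1)}$, $f_i^{(1)}$ respectively. Passing to $\gr$ and applying $\theta$ of Theorem~\ref{poison}, I expand $\theta(H_j(u))=\prod_k\Delta_{kk}(u)^{-a_{kj}}$, $\theta(E_i(u))=d_i^{-1/2}\Delta_{s_i\omega_i,\omega_i}(u)/\Delta_{\omega_i,\omega_i}(u)$, $\theta(F_i(u))=d_i^{-1/2}\Delta_{\omega_i,s_i\omega_i}(u)/\Delta_{\omega_i,\omega_i}(u)$ to first order in $u^{-1}$: the coefficient of $u^{-1}$ in $\theta(H_j(u))$ is $-\sum_k a_{kj}\Delta_{kk}^{(1)}$, and in $\theta(E_i(u))$, $\theta(F_i(u))$ it is $d_i^{-1/2}\Delta_{21}^{(1)}$, $d_i^{-1/2}\Delta_{12}^{(1)}$ (the denominators contribute only at higher order since $\Delta_{11}(u)=1+O(u^{-1})$). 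Comparing coefficients of matrix entries in the two descriptions of $t_{11}^{(1)},t_{12}^{(1)},t_{21}^{(1)}$ — both being images of the same elements of $\fg$ under the two realizations $\psi$ and $\phi\circ\theta^{-1}$ of $Y(\fg)$ — pins down $\gr(\phi^{-1}\circ\psi)(t_{11}^{(1)})=\Delta_{11}^{(1)}$, $\gr(\phi^{-1}\circ\psi)(t_{12}^{(1)})=\Delta_{12}^{(1)}$, $\gr(\phi^{-1}\circ\psi)(t_{21}^{(1)})=\Delta_{21}^{(1)}$, with the $d_i^{1/2}$ and weight-normalization factors cancelling exactly as in the conventions fixed in Section~2.1.

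For $t_{11}^{(2)}$ I would go to the second order. Two inputs are needed: the $u^{-2}$-term of $\hat R(u)$, which contributes a symmetric element of $\fg\otimes\fg$ of the form $\frac12\Omega^2+(\text{a }J\text{-linear term built from }\Omega)$ — the key point being that its "$Y(\fg)$-leg'' involves $J(x)$ for $x$ in the Cartan (after pairing the $V$-leg against $v_i^*,v_i$), with coefficient normalized so that its symbol is $h_j^{(2)}$-type up to a quadratic correction in the $h_j^{(1)}$; and, matching this, the $u^{-2}$-coefficient of $\theta(H_j(u))=\prod_k\Delta_{kk}(u)^{-a_{kj}}$, which is $-\sum_k a_{kj}\Delta_{kk}^{(2)}$ plus a quadratic expression in the $\Delta_{kk}^{(1)}$. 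Corollary~\ref{pgen} and the explicit relation $\phi(h_i^{(2)})=J(h_i)-v_i$ (with $v_i$ the quadratic $\fg$-element of Theorem~\ref{phi}) let me identify the linear-in-degree-$2$ parts on both sides; since we only care about the symbol in $\gr$, all the quadratic-in-degree-$1$ tails are lower-filtration-degree noise that is determined automatically and need not be tracked. Comparing the single fundamental index $i$ (using that $a_{ii}=2$ and that the chosen basis puts $v_i$ first) isolates $\Delta_{11}^{(2)}$ and gives $\gr(\phi^{-1}\circ\psi)(t_{11}^{(2)})=\Delta_{11}^{(2)}$.

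The main obstacle I anticipate is bookkeeping the second-order term: getting the precise coefficient of the $J$-linear piece of $\hat R(u)$ at order $u^{-2}$ right, and making sure that when it is evaluated leg-one in $\rho$ on $V(\omega_i,0)$ and sandwiched by $v_i^*,v_i$ it reduces to exactly $\phi(h_i^{(2)})$ modulo $Y_{new}^{(1)}$ and modulo the quadratic correction $v_i$, with no stray contributions from the non-Cartan root vectors (those pair to zero against the one-dimensional highest weight space, but one must check the restriction of $V(\omega_i,0)$ to $\fg$ does not spoil this — it does not, since $V_{\omega_i}$ sits as a summand and the highest-weight line is unambiguous). Once the conventions of Section~2.1 are fed in consistently, the matching is forced; the first-order statements for $t_{11}^{(1)},t_{12}^{(1)},t_{21}^{(1)}$ are comparatively routine.
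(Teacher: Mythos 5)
Your overall strategy --- expand $(\rho\otimes1)\hat R(-u)$ to first and second order on $V(\omega_i,0)$, sandwich between $v_i^*,w_i^*$ and $v_i,w_i$, transport through $\phi^{-1}$, and match against Theorem~\ref{poison} --- is exactly the paper's. The first-order part is essentially fine, modulo one swap you should repair: in $t^{(1)}_{\beta,w}=-\sum_a\langle\beta,\rho(x_a)w\rangle\,x^a$ the element that survives in the $Y(\fg)$-leg is \emph{dual} to the root vector acting on the $V$-leg, so $t_{21}^{(1)}=t_{w_i^*,v_i}^{(1)}$ is proportional to the raising element $x_{\alpha_i}^{+}$ (hence to $e_i^{(1)}$) and $t_{12}^{(1)}=t_{v_i^*,w_i}^{(1)}$ to $x_{\alpha_i}^{-}$ (hence to $f_i^{(1)}$), not the other way around as you wrote. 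As stated, your intermediate identifications contradict your own (correct) matching $E_i\leftrightarrow\Delta_{21}$, $F_i\leftrightarrow\Delta_{12}$ in the following paragraph.

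The genuine gap is in the $t_{11}^{(2)}$ step. You assert that ``all the quadratic-in-degree-$1$ tails are lower-filtration-degree noise that \dots need not be tracked.'' They are not: the filtration assigns $\deg h_j^{(1)}=1$, so a product $(h_j^{(1)})^2$ has degree $2$ and survives in the degree-$2$ component of $\gr Y_{new}(\fg)$ --- exactly where the symbol of $t_{11}^{(2)}$ lives. Likewise $\Delta_{11}^{(2)}$, expressed through $\theta$, equals $-\sum_j b_{ij}h_j^{(2)}$ \emph{plus} a specific quadratic expression in the $h_j^{(1)}$. Dropping the quadratic terms therefore only shows that the two symbols agree modulo products of degree-$1$ elements, which is strictly weaker than the Lemma and insufficient for its use in Proposition~\ref{image} and Corollary~\ref{pgen}. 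The paper's proof spends most of its length on precisely this bookkeeping: evaluating $\frac12(\rho\otimes1)\Omega^2$ together with the quadratic correction in $\phi(h_i^{(2)})=J(h_i)-v_i$ on the highest-weight line, and checking that the resulting expression
$$-\sum_j b_{ij}h_j^{(2)}+\frac12\sum_j b_{ij}\bigl(h_j^{(1)}\bigr)^2+\frac12\Bigl(\sum_j b_{ij}h_j^{(1)}\Bigr)^2$$
coincides exactly with $\Delta_{11}^{(2)}$ as given by Theorem~\ref{poison}. You need to carry out this comparison; it is the actual content of the hardest quarter of the Lemma and is not determined automatically.
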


\begin{proof} 
We compute the composition $\phi^{-1} \circ \psi$, see Theorem \ref{phi} and Theorem \ref{iso}.

Firstly, $t_{11}^{(1)}$ maps to the coefficient of $u^{-1}$ in the in the $(1,1)$-th entry of $(\rho\otimes 1)\hat R(-u)$ i.e. to the $(1,1)$-th entry of $$-\sum_{\alpha \in \Phi} \rho(x_{\alpha}^+) \otimes x_{\alpha}^- -  \sum_i \dfrac{1}{d_i} \rho(h_i) \otimes t_{\omega_i} \in \End(V) \otimes Y(\fg)$$
We see that $t_{11}^{(1)}$ goes to $-t_{\omega_i}^{(1)}$.
Here $t_{\omega_i}^{(1)} = \sum_j b_{ij} h_j^{(1)}$ and $(b_{ji})_{i,j=1, \ldots, n}$ are the coefficients of matrix inverse to Cartan matrix.

Secondly, $t_{12}^{(1)}$ maps to the coefficient of $u^{-1}$ in the $(1,2)$-th entry in $(\rho\otimes 1)\hat R(-u)$, i.e. to the $(1,2)$-th entry of
$$-\sum_{\alpha \in \Phi^+} \rho(x_{\alpha}^{+}) \otimes x_{\alpha}^{-} - \sum_{\alpha \in \Phi^{+}} \rho(x_{\alpha}^{-}) \otimes x_{\alpha}^{+} -  \dfrac{1}{d_i} \sum_i \rho(h_i) \otimes t_{\omega_i} \in \End(V) \otimes Y(\fg)$$
We see that $t_{12}^{(1)}$ maps to $d_i^{1/2} x_{\alpha_i}^- = d_i^{1/2} f_{i}^{(1)}$. (Here we use the fact that the representation of $sl_2 = \{e_i,f_i,\frac{1}{d_i}h_i\}$ generated from the highest vector is 2-dimensional).
Similarly, $t_{21}^{(1)}$ maps to $x_{\alpha_i}^+ = d_i^{1/2} e_{i}^{(1)}$.

Consider the element $t_{11}^{(2)}$. According to the expression of the second Fourier coefficient of $\hat R(-u)$ (see Theorem~3 of \cite{drin} and Theorem 3.4 of \cite{wend}) $t_{11}^{(2)}$ maps to the $(1,1)$ entry of

\begin{equation}\label{R2} -\sum_{\lambda \in \Phi^+} (\rho(x_{\alpha}^{\pm}) \otimes J(x_{\alpha}^\mp) - \rho(J(x_{\alpha}^\pm)) \otimes x_{\alpha}^\mp) - \frac{1}{d_i}\sum_{i=1}^n (\rho(h_i) \otimes J(t_{\omega_i}) - \rho(J(h_i)) \otimes t_{\omega_i}) + \frac{1}{2} (\rho \otimes 1)\Omega^2
\end{equation}

which is, up to degree $1$ terms, 

$$-J(t_{\omega_i}) + \frac{1}{2} \sum_{\alpha \in \Phi^+} \omega_i(h_{\alpha}) x_{\alpha}^- x_{\alpha}^+ + \frac{1}{2} t_{\omega_i}^2 = $$
\begin{eqnarray*} = -\sum_{j} b_{ij} h_j^{(2)} - \frac{1}{4} \sum_j \left(b_{ij}\left(\sum_{\alpha \in \Phi^+} (\alpha, \alpha_j) (x_{\alpha}^+ x_{\alpha}^- + x_{\alpha}^- x_{\alpha}^+)\right) - 2b_{ij}(h_j^{(1)})^2\right) + \\ + \frac{1}{2} \sum_{\alpha \in \Phi^+}\sum_j b_{ij}(\alpha, \alpha_j) x_{\alpha}^- x_{\alpha}^+ + \frac{1}{2}t_{\omega_i}^2 \end{eqnarray*}
Hence up to degree $1$ terms we have
\begin{eqnarray*}t_{11}^{(2)} \mapsto -\sum_{j} b_{ij} h_j^{(2)} + \frac{1}{2}\sum_j b_{ij} (h_j^{(1)})^2 + \frac{1}{2}\left(\sum_j b_{ij} h_j^{(1)}\right)^2 = \\ = -\sum_{j} b_{ij} h_j^{(2)} + \sum_j \frac{b_{ij}(b_{ij}+1)}{2} (h_j^{(1)})^2 + \sum_{j,l} b_{ij}b_{il} h_j^{(1)} h_l^{(1)}  \end{eqnarray*}
By Theorem~\ref{poison} the latter is $\Delta_{11}^{(2)}$, so we see that $t_{11}^{(2)}$ goes to $\Delta_{11}^{(2)}$.

\end{proof}

\begin{prop}
\label{rttgen}
The Yangian $Y_V(\fg)$ is generated by $t_{11}^{(1)}, t_{12}^{(1)}, t_{21}^{(1)}, t_{11}^{(2)}$ for all $\omega_i$.
\end{prop}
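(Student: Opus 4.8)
The plan is to descend to the associated graded algebra and exploit that, by Lemma~\ref{poisson}, $\gr Y_V(\fg)$ is not merely commutative but a Poisson algebra (with bracket of degree $-1$), so that the \emph{Poisson} generation statement of Corollary~\ref{pgen} can be brought to bear. Let $Y'\subseteq Y_V(\fg)$ denote the subalgebra generated by $t_{11}^{(1)},t_{12}^{(1)},t_{21}^{(1)},t_{11}^{(2)}$ ranging over all fundamental weights $\omega_i$, equipped with the filtration induced from $Y_V(\fg)$. The inclusion induces an injection of graded algebras $\gr Y'\hookrightarrow\gr Y_V(\fg)$, and because $Y'$ is closed under commutators and carries the induced filtration, $\gr Y'$ is in fact a Poisson subalgebra of $\gr Y_V(\fg)$: if $a\in Y'\cap Y_V^{(r)}$ and $b\in Y'\cap Y_V^{(s)}$, then $[a,b]\in Y'\cap Y_V^{(r+s-1)}$, so the symbol $\{\bar a,\bar b\}$ lies in $\gr Y'$. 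I would then show $\gr Y'=\gr Y_V(\fg)$; granting this, a routine induction on filtration degree finishes the proof: $Y_V^{(0)}=\bc\subseteq Y'$, the filtration is exhaustive, and for $x\in Y_V^{(d)}$ with $d\geq 1$ one subtracts an element of $Y'\cap Y_V^{(d)}$ having the same symbol, reducing to $Y_V^{(d-1)}\subseteq Y'$.

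To prove $\gr Y'=\gr Y_V(\fg)$, I would identify $\gr Y_V(\fg)$ with $\mathcal{O}(G_1[[t^{-1}]])$ as Poisson algebras via $\theta\circ\gr(\phi^{-1}\circ\psi)$, using Theorems \ref{phi}, \ref{iso} and \ref{poison} (the fact that $\phi^{-1}\circ\psi$ is compatible with the two filtrations on $Y(\fg)$ is exactly what is implicit in the statement of Lemma~\ref{t11}). Under this identification, Lemma~\ref{t11} says that for each $i$ the symbols of $t_{11}^{(1)},t_{12}^{(1)},t_{21}^{(1)},t_{11}^{(2)}$ are $\Delta_{\omega_i,\omega_i}^{(1)}$, $\Delta_{\omega_i,s_i\omega_i}^{(1)}$, $\Delta_{s_i\omega_i,\omega_i}^{(1)}$, $\Delta_{\omega_i,\omega_i}^{(2)}$. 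Ranging over all $i$, these are precisely the Poisson generators of $\mathcal{O}(G_1[[t^{-1}]])$ furnished by Corollary~\ref{pgen}. Hence the Poisson subalgebra $\gr Y'$ of $\mathcal{O}(G_1[[t^{-1}]])\cong\gr Y_V(\fg)$ contains a Poisson generating set, and therefore $\gr Y'=\gr Y_V(\fg)$, which is what was needed.

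The only step with genuine content beyond bookkeeping is the insistence that $\gr Y'$ be handled as a Poisson, and not merely associative, subalgebra: as an associative algebra $\gr Y'$ is generated in degrees $1$ and $2$ only, whereas $\mathcal{O}(G_1[[t^{-1}]])$ is plainly not generated in bounded degree, and it is precisely the degree $-1$ Poisson bracket that produces the missing higher-degree generators $\Delta^{(s)}_{\bullet,\bullet}$. A secondary point to verify is that $\phi^{-1}\circ\psi$ is a filtered isomorphism inducing a Poisson isomorphism on associated graded algebras; this is already built into Lemma~\ref{t11} and, via Remark~\ref{gennew}, reduces to the explicit computations carried out there.
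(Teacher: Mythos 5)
Your route is genuinely different from the paper's, and it contains a gap at its load-bearing step. You reduce everything to the claim that $\theta\circ\gr(\phi^{-1}\circ\psi)$ identifies $\gr Y_V(\fg)$ with $\mathcal{O}(G_1[[t^{-1}]])$ as Poisson algebras, and you assert that this is ``implicit in'' or ``built into'' Lemma~\ref{t11}. It is not. Lemma~\ref{t11} is a finite computation of the symbols of four specific low-degree elements; it says nothing about whether $\phi^{-1}\circ\psi$ is a \emph{strict} filtered isomorphism, i.e.\ whether $(\phi^{-1}\circ\psi)(Y_V^{(r)})=Y_{new}^{(r)}$ for all $r$. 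That statement is exactly Proposition~\ref{filtr}, whose proof is the hardest part of Section~2 (it needs the PBW-type bound $\dim Y_V^{(r)}\leq\dim Y_{new}^{(r)}$ obtained from the decomposition of $\End V$, plus surjectivity on filtered components), and it appears \emph{after} Proposition~\ref{rttgen}. Without strictness, $\gr(\phi^{-1}\circ\psi)$ need not be injective, and then knowing that the image of $\gr Y'$ Poisson-generates $\mathcal{O}(G_1[[t^{-1}]])$ (via Corollary~\ref{pgen}) does not let you conclude $\gr Y'=\gr Y_V(\fg)$. You would also need to justify that the induced map on associated graded algebras is Poisson, which is the content of Lemma~\ref{poisson} together with the comparison made in Proposition~\ref{image} --- again downstream of the proposition you are proving. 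So as written the argument either imports unproved later results or is a substantial reordering of the section; the bookkeeping steps (that $\gr Y'$ is a Poisson subalgebra, and the filtration induction) are fine.

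The paper's own proof avoids the associated graded entirely and is essentially one line: $\phi^{-1}\circ\psi$ is an algebra isomorphism (Theorems~\ref{phi} and~\ref{iso}, no filtration needed), and the computation in the proof of Lemma~\ref{t11} shows that the images of $t_{11}^{(1)},t_{12}^{(1)},t_{21}^{(1)},t_{11}^{(2)}$ are, up to the invertible matrix $(b_{ij})$ and corrections lying in $U(\fg)$, the elements $h_i^{(1)}$, $f_i^{(1)}$, $e_i^{(1)}$, $h_i^{(2)}$, which generate $Y_{new}(\fg)$ by Remark~\ref{gennew}. The mechanism you invoke (the degree $-1$ Poisson bracket manufacturing higher-degree generators) is unnecessary here precisely because $Y_{new}(\fg)$ is honestly generated, as an associative algebra, by those finitely many families: the higher $e_\alpha^{(r)},f_\alpha^{(r)},h_i^{(r)}$ are iterated commutators of them. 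If you want to salvage your approach, you must first prove Proposition~\ref{filtr} independently --- which is possible but costs far more than the direct argument.
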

\begin{proof}
This follows from the proof of the previous Lemma and the fact that $Y_{new}(\fg)$ is generated by $e_i^{(1)}, f_i^{(1)}, h_i^{(1)}, h_i^{(2)}, i=1, \ldots, n$.
\end{proof}

\begin{prop}
\label{filtr}
$\phi^{-1}\circ\psi:Y_V(\fg)\to Y_{new}(\fg)$ is an isomorphism of filtered algebras.
\end{prop}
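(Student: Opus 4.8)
Write $\chi=\phi^{-1}\circ\psi\colon Y_V(\fg)\to Y_{new}(\fg)$; by Theorems \ref{phi} and \ref{iso} it is an isomorphism of algebras, and both $Y_V(\fg)$ and $Y_{new}(\fg)$ carry filtrations with commutative associated graded. The plan is to establish two facts: that $\chi^{-1}$ maps $Y_{new}^{(r)}$ into $Y_V^{(r)}$, and that the induced map $\gr\chi^{-1}\colon\gr Y_{new}(\fg)\to\gr Y_V(\fg)$ is surjective. Granting these, a routine induction on $r$ (lift a symbol of a given element of $Y_V^{(r)}$ through $\gr\chi^{-1}$, subtract, and correct the resulting error, which lies in $Y_V^{(r-1)}$, by the inductive hypothesis) yields $\chi^{-1}(Y_{new}^{(r)})=Y_V^{(r)}$ for all $r$; since $\chi$ is already bijective, it is then an isomorphism of filtered algebras.

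To control $\chi^{-1}$ I would first read its action on the low-degree Drinfeld generators off the computations in the proof of Lemma \ref{t11}: for each fundamental weight $\omega_i$ one has $\chi(t_{21}^{(1)})=d_i^{1/2}e_i^{(1)}$, $\chi(t_{12}^{(1)})=d_i^{1/2}f_i^{(1)}$, $\chi(t_{11}^{(1)})=-\sum_j b_{ij}h_j^{(1)}$ and $\chi(t_{11}^{(2)})=-\sum_j b_{ij}h_j^{(2)}+Q_i$, with $Q_i$ a fixed quadratic expression in the $h_j^{(1)}$. Since the Cartan matrix is invertible, these relations can be solved for the generators, which gives $\chi^{-1}(e_i^{(1)}),\chi^{-1}(f_i^{(1)}),\chi^{-1}(h_i^{(1)})\in Y_V^{(1)}$ and, because $\chi^{-1}$ is an algebra map and $Q_i$ is quadratic in degree-$1$ elements, $\chi^{-1}(h_i^{(2)})\in Y_V^{(2)}$. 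Next I would propagate this to the whole PBW family using the Drinfeld relations, which produce recursions such as $e_i^{(s+1)}=\frac12\big([h_i^{(2)},e_i^{(s)}]-h_i^{(1)}e_i^{(s)}-e_i^{(s)}h_i^{(1)}\big)$ (and its analogue for $f_i^{(s)}$), together with $h_i^{(s)}=[e_i^{(1)},f_i^{(s)}]$ and, for non-simple $\alpha$, $e_\alpha^{(s)}=[e_{\hat\alpha}^{(s)},e_{\check\alpha}^{(1)}]$, $f_\alpha^{(s)}=[f_{\hat\alpha}^{(s)},f_{\check\alpha}^{(1)}]$. Applying $\chi^{-1}$, and using that it is a ring homomorphism together with the fact that commutators lower the filtration degree in $Y_V(\fg)$ (that is, $[Y_V^{(p)},Y_V^{(q)}]\subseteq Y_V^{(p+q-1)}$, since $\gr Y_V(\fg)$ is commutative), an induction on $s$ and on the height of $\alpha$ gives $\chi^{-1}(e_\alpha^{(s)}),\chi^{-1}(h_i^{(s)}),\chi^{-1}(f_\alpha^{(s)})\in Y_V^{(s)}$. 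As $Y_{new}^{(r)}$ is spanned by PBW monomials of total degree at most $r$, this proves $\chi^{-1}(Y_{new}^{(r)})\subseteq Y_V^{(r)}$.

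Once this is known, $\gr\chi^{-1}$ is a well-defined graded homomorphism, and in fact a homomorphism of Poisson algebras, being the associated graded of a filtered algebra isomorphism whose associated gradeds are commutative. Its image contains the symbols of $t_{21}^{(1)},t_{12}^{(1)},t_{11}^{(1)}$ and $t_{11}^{(2)}$ for every $\omega_i$ (again by inverting the Cartan matrix, and observing that for $t_{11}^{(2)}$ the correction $Q_i$ already lies in the subalgebra generated by the symbols of the $t_{11}^{(1)}$). By Proposition \ref{rttgen} these elements generate $Y_V(\fg)$, so their symbols generate $\gr Y_V(\fg)$ as a Poisson algebra, exactly as in the proof of Corollary \ref{pgen}. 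Hence the image of $\gr\chi^{-1}$, a Poisson subalgebra containing a Poisson-generating set, is all of $\gr Y_V(\fg)$, which completes the argument. The bulk of the work is the second paragraph: extracting the precise low-degree images from the proof of Lemma \ref{t11} and checking that the Drinfeld recursions, pushed through $\chi^{-1}$, genuinely stay inside $Y_V^{(s)}$; the surjectivity of $\gr\chi^{-1}$ is comparatively formal once Proposition \ref{rttgen} is available.
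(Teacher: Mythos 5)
Your first main step is correct and closely parallels the second half of the paper's own argument: using the explicit low-degree images from Lemma \ref{t11}, the Drinfeld recursions, and the fact that commutators drop the filtration degree by one in $Y_V(\fg)$, you correctly obtain $(\phi^{-1}\circ\psi)^{-1}(Y_{new}^{(r)})\subseteq Y_V^{(r)}$, i.e. $Y_{new}^{(r)}\subseteq (\phi^{-1}\circ\psi)(Y_V^{(r)})$. The gap is in the step you dismiss as ``comparatively formal'', namely the surjectivity of $\gr\chi^{-1}$ (in your notation). The inference ``$t_{11}^{(1)},t_{12}^{(1)},t_{21}^{(1)},t_{11}^{(2)}$ generate $Y_V(\fg)$ as an associative algebra, hence their symbols generate $\gr Y_V(\fg)$ as a Poisson algebra'' is not valid in general. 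Take the Weyl algebra $A=\bc\langle p,q\rangle/([p,q]=1)$ with $\deg p=\deg q=1$ and the generating set $\{p+q^2,\,q\}$ of degrees $2$ and $1$: it generates $A$, but the Poisson subalgebra of $\gr A=\bc[\bar p,\bar q]$ generated by the symbols $\bar q^{\,2}$ and $\bar q$ is only $\bc[\bar q]$. The failure mode is an unexpected degree drop: an element of low filtration degree may only be expressible through higher-degree monomials in the generators whose leading symbols cancel, and nothing you have proved excludes this for a general $t_{ij}^{(r)}$.

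To rule this out for $Y_V(\fg)$ one needs to know that the filtration $\deg t_{ij}^{(r)}=r$ is no larger than it should be --- concretely, that every $t_{ij}^{(r)}$ is congruent modulo $Y_V^{(r-1)}$ to an expression in the chosen generators with the expected degree bookkeeping, or equivalently that $\dim Y_V^{(r)}\le\dim Y_{new}^{(r)}$. This is exactly the content of the first half of the paper's proof, which decomposes $\End V=\fg\oplus\mathcal{E}\oplus\mathcal{E}_c\oplus\mathcal{F}$ as a $\fg$-module and shows, using Wendlandt's results, that $\mathcal{F}^{(r)}\subseteq Y_V^{(r-1)}$, that $\mathcal{E}_c$ vanishes in the associated graded, and that the degree-$r$ part of $\mathcal{E}$ is expressible through lower-degree elements; this yields the spanning statement bounding $\dim Y_V^{(r)}$ from above, after which your inclusion forces equality. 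Note also that the analogous step for $\gr Y_{new}(\fg)$ in Corollary \ref{pgen}, which you cite as a model, is legitimate there only because the PBW theorem for $Y_{new}(\fg)$ is already known and controls the degrees; for $Y_V(\fg)$ the corresponding structural control is precisely what Proposition \ref{filtr} is meant to establish, so invoking it at this point makes the argument circular.
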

\begin{proof}
The strategy of proof is as follows:
first, we prove the easy part of PBW theorem, that is $\dim Y_V^{(r)} \leq \dim Y_{new}^{(r)}$. Next, we show that the isomorphism of Theorem \ref{iso} is a filtered algebra homomorphism and is surjective on filtered components.

Following ~\cite{wend} let us decompose $\End V$ as a representation of $\fg$:
$$\End V = \fg \oplus \mathcal{E} \oplus \mathcal{E}_c \oplus \mathcal{F}.$$
Here $\fg$ is the copy of $\fg$ in $\End V$ corresponding to the $\fg$-action on $V$, 
$\mathcal{E} = \End_{Y(\fg)} V \subset \End_{\fg} V = \mathcal{E} \oplus \mathcal{E}_c$. $\mathcal{E}_c$ is a $\fg$-invariant complement to $\mathcal{E}$ in $\End_{\fg} V$ and $\mathcal{F}$ is a $\fg$-invariant complement to $\fg \oplus \End_{\fg} V$ in $\End V$. We put $\mathcal{Z} = \mathcal{E} \oplus \mathcal{E}_c$.
Denote by $\mathcal{T}^{(r)}$ the subspace spanned by all $t_{ij}^{(r)} \in Y_V(\fg)$.
Note that as there is an isomorphism of vector spaces $\mathcal{T}^{(r)}\simeq\End V$ such that the $r$-rh component of $T(u)$, $T^{(r)} = \sum_{ij} t_{ij}^{(r)} \otimes e_{ij}$, is the identity operator in $\End(V)\otimes\End(V)$ (here we use the isomorphism $\End V \simeq (\End V)^{*}$). Hence we have the decomposition $\mathcal{T}^{(r)} = \fg^{(r)} + \mathcal{Z}^{(r)} + \mathcal{F}^{(r)}$. This gives $$Y^{(r)}_V = \fg^{(r)} + \mathcal{Z}^{(r)} + \mathcal{F}^{(r)} +  \text{expressions of degree  $\leqslant r$ in elements of $Y_V^{(r-1)}$.} $$

It follows from ~\cite[Theorem 6.5]{wend} that $Y^{(1)}_V = \fg^{(1)} \simeq \fg$. It follows from the defining relations that there is a surjective homomorphism $\mathcal{T}^{(1)}\to \mathcal{T}^{(r)}$ as $\fg^{(1)}$-modules for any $r \geqslant 1$.
Therefore, from $[\fg^{(1)}, \mathcal{F}^{(1)}] = \mathcal{F}^{(1)}$ it follows that $[\fg^{(1)}, \mathcal{F}^{(r)}] = \mathcal{F}^{(r)}$.

We claim that 
$[\fg^{(1)}, \mathcal{F}^{(r)}] = \mathcal{F}^{(r)} \subset Y_V^{(r-1)}$. 
Indeed, $T^{(r)}$ corresponds to the identity element in $\mathcal{T}^{(r)} \otimes \End V$. Hence
$$T^{(r)} \in \fg^{(r)} \otimes \fg + \mathcal{Z}^{(r)} \otimes \mathcal{Z} + \mathcal{F}^{(r)} \otimes \mathcal{F}.$$

Observe that
$$[\mathcal{F}_1^{(r)}, \fg^{(1)}_2] = [\Omega_{\rho}, \mathcal{F}_2^{(r)}] + \ \text{smaller degree terms}.$$ \ 
Using the same argument as in ~\cite[Lemma 4.2]{wend} we get $[\fg^{(1)}, \mathcal{F}^{(r)}] = \mathcal{F}^{(r)} \subset Y_V^{(r-1)}$.

Now we claim that $\mathcal{E}_c = 0$ in our associated graded algebra.
It follows from relation \cite[(5.10)]{wend} (which is still true for our filtration) and from \cite[Lemma 4.13]{wend}.
From \cite[Proposition 5.6]{wend} it follows, that the elements of degree $r$ from $\mathcal{E}$ can be represented as sums of monomials in $t_{ij}^{(k)}$ with $k< r$ of total degree non-greater than $r$. 
Finally we conclude that ordered monomials in $\fg^{(r)}, r \geq 1$ span $Y_V(\fg)$ therefore $\dim Y_V^{(r)} \leqslant \dim Y_{new}^{(r)}$.


From the proof of previous proposition we know that $t_{11}^{(1)} \mapsto -t_{\omega_i}^{(1)}, t_{12}^{(1)} \mapsto -e_{\alpha_i}^{(1)}, t_{21}^{(1)} \mapsto -f_{\alpha_i}^{(1)}$.
Note that these elements generate $Y_{new}^{(1)}$, consequently we have a surjective homomorphism from $Y_V^{(1)}$ to $Y_{new}^{(1)}$.
Define $t_{\omega_i}^{(2)} = \sum_j a_{ji} h_i^{(2)}$.
We know that $$t_{11}^{(2)} \mapsto t_{\omega_i}^{(2)} + \text{quadratic-linear expression in $Y_{new}^{(1)}$}.$$
Hence for any $i$, the element $t_{\omega_i}^{(2)}$ is in the image of $Y_V^{(2)}$, so the same is true for $h_i^{(2)}$.
Note that one can obtain the whole $Y_{new}^{(2)}$ from $h_i^{(2)}$ and $e_i^{(1)}, f_i^{(1)}$ by using commutators. 
On the other hand we know also that the degree of a commutator of elements of degrees $p$ and $q$ is not greater than $p+q-1$ (both in $Y_V$ and $Y_{new}$) hence $Y_V^{(2)}$ maps to $Y_{new}^{(2)}$ surjectively. Comparing the dimensions we see that it is in fact and isomorphism. Proceeding by induction by taking commutator with $h_i^{(2)}$ on each step we obtain the result.
\end{proof}

\begin{cor}
$\gr Y_V(\fg) = \gr Y_{new}(\fg)=\mathcal{O}(G_1[[t^{-1}]])$.
\end{cor}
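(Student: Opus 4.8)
The plan is to deduce this immediately from Proposition~\ref{filtr} together with Theorem~\ref{poison}. First, Proposition~\ref{filtr} asserts that $\phi^{-1}\circ\psi\colon Y_V(\fg)\to Y_{new}(\fg)$ is an isomorphism of filtered algebras, i.e.\ it restricts to isomorphisms $Y_V^{(r)}\xrightarrow{\sim}Y_{new}^{(r)}$ for every $r$. Passing to the associated graded, these assemble into an isomorphism of graded algebras $\gr(\phi^{-1}\circ\psi)\colon\gr Y_V(\fg)\xrightarrow{\sim}\gr Y_{new}(\fg)$.

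Next I would check that this is an isomorphism of Poisson algebras, not merely of graded algebras. For any filtered algebra $A$ whose associated graded is commutative, $\gr A$ carries a canonical Poisson bracket defined by $\{\bar a,\bar b\}:=\overline{[a,b]}$ for $a\in A^{(p)}$, $b\in A^{(q)}$, the right-hand side being taken in $A^{(p+q-1)}/A^{(p+q-2)}$; this is precisely the bracket appearing in Lemma~\ref{poisson} for $Y_V(\fg)$ and the standard bracket on $\gr Y_{new}(\fg)$. Since $\phi^{-1}\circ\psi$ is an algebra homomorphism compatible with the filtrations, it intertwines commutators, hence $\gr(\phi^{-1}\circ\psi)$ intertwines these brackets. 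This yields the first equality $\gr Y_V(\fg)=\gr Y_{new}(\fg)$ as Poisson algebras.

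The second equality is exactly the content of Theorem~\ref{poison}, which supplies a Poisson isomorphism $\theta\colon\gr Y_{new}(\fg)\xrightarrow{\sim}\mathcal{O}(G_1[[t^{-1}]])$. Composing, $\theta\circ\gr(\phi^{-1}\circ\psi)$ identifies $\gr Y_V(\fg)$ with $\mathcal{O}(G_1[[t^{-1}]])$ as Poisson algebras, which is the assertion. I do not expect any real obstacle here: all the substantive work --- the PBW-type dimension estimate $\dim Y_V^{(r)}\leqslant\dim Y_{new}^{(r)}$ and the surjectivity of $\phi^{-1}\circ\psi$ on filtered components --- was already carried out in the proof of Proposition~\ref{filtr}, and the only thing added is the elementary observation that a filtered algebra isomorphism induces a Poisson isomorphism on associated graded.
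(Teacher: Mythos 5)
Your deduction is correct and is exactly the paper's (implicit) argument: the corollary is stated without proof as an immediate consequence of Proposition~\ref{filtr} combined with Theorem~\ref{poison}, which is precisely what you write. The extra observation that the identification respects the canonical Poisson brackets is a harmless and accurate refinement.
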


\begin{prop}
\label{image}
We have $\gr \phi^{-1}\circ\psi(t_{\beta, w}^{(r)})=\Delta_{\beta, w}^{(r)}$ in $\gr Y_{new}(\fg)=\mathcal{O}(G_1[[t^{-1}]])$.
\end{prop}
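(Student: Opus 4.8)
The plan is to identify $\gr(\phi^{-1}\circ\psi)$, which by Proposition~\ref{filtr} is an isomorphism of graded Poisson algebras $\gr Y_V(\fg)\xrightarrow{\sim}\gr Y_{new}(\fg)=\mathcal{O}(G_1[[t^{-1}]])$, with the assignment $t_{\beta,w}^{(r)}\mapsto\Delta_{\beta,w}^{(r)}$. Since both $t_{\beta,w}(u)$ and $\Delta_{\beta,w}(u)$ are bilinear in $(\beta,w)$, the claim is an equality of two linear maps $V^*\otimes V\to\mathcal{O}(G_1[[t^{-1}]])$ in each Fourier degree $r$. By Proposition~\ref{rttgen}, $Y_V(\fg)$ is generated by $t_{11}^{(1)},t_{12}^{(1)},t_{21}^{(1)},t_{11}^{(2)}$ (over all fundamental weights), hence $\gr Y_V(\fg)$ is generated as a Poisson algebra by their symbols, and Lemma~\ref{t11} records that $\gr(\phi^{-1}\circ\psi)$ sends these symbols exactly to $\Delta_{11}^{(1)},\Delta_{12}^{(1)},\Delta_{21}^{(1)},\Delta_{11}^{(2)}$. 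It therefore suffices to check that the family $\{\Delta_{\beta,w}^{(r)}\}$ obeys the defining relations of $\gr Y_V(\fg)$: then $t_{\beta,w}^{(r)}\mapsto\Delta_{\beta,w}^{(r)}$ extends to a Poisson homomorphism $\eta:\gr Y_V(\fg)\to\mathcal{O}(G_1[[t^{-1}]])$ which agrees with $\gr(\phi^{-1}\circ\psi)$ on a generating set, so $\eta=\gr(\phi^{-1}\circ\psi)$, which is the assertion.

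The relations to be verified for the matrix $\Delta(u)=[\Delta_{e_i^*,e_j}(u)]$ are the graded $RTT$ relations and the graded $S^2$-relation. The $S^2$-relation is automatic: $\Delta(u)$ is by construction the image of a genuine point of $G_1[[t^{-1}]]$ in the representation $V$, so it is invertible with the expected inverse. The content is the Poisson-$RTT$ relation, i.e. that with respect to the rational $r$-matrix Poisson bracket on $\mathcal{O}(G_1[[t^{-1}]])$ the matrix coefficients satisfy precisely the formula of Lemma~\ref{poisson},
\[\{\Delta_{\beta_1,w_1}(u),\Delta_{\beta_2,w_2}(v)\}=\frac{1}{v-u}\sum_a\bigl(\Delta_{\beta_1,x_aw_1}(u)\,\Delta_{\beta_2,x^aw_2}(v)-\Delta_{x_a\beta_1,w_1}(u)\,\Delta_{x^a\beta_2,w_2}(v)\bigr).\]
This is the standard Sklyanin form of a quadratic Poisson--Lie bracket; in our setting it is cleanest to deduce it from Theorem~\ref{poison}, which gives the bracket explicitly among $H_i(u),E_i(u),F_i(u)$, hence among the ``Drinfeld'' matrix coefficients $\Delta_{\omega_i,\omega_i}(u)^{\pm1}$, $\Delta_{\omega_i,s_i\omega_i}(u)/\Delta_{\omega_i,\omega_i}(u)$, $\Delta_{s_i\omega_i,\omega_i}(u)/\Delta_{\omega_i,\omega_i}(u)$ and $\Delta_{\omega_i,\omega_i}^{(2)}$, and one propagates it to all matrix coefficients of all fundamental representations — and thence to those of $V=\bigoplus_iV(\omega_i,0)$ — using that matrix coefficients of a tensor product of representations are products (so Fourier components are convolutions) and that this multiplicativity is compatible with both the product and the bracket.

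I expect this last propagation to be the main obstacle: passing from the explicit brackets among the handful of ``Drinfeld'' generators to the full Poisson-$RTT$ relation for arbitrary matrix coefficients. If one prefers to avoid constructing $\eta$, one can argue by induction on $r$ instead. For $r=1$ the claim follows from Lemma~\ref{t11} together with the $\fg$-equivariance of $\gr(\phi^{-1}\circ\psi)$: both $\beta\otimes w\mapsto t_{\beta,w}^{(r)}$ and $\beta\otimes w\mapsto\Delta_{\beta,w}^{(r)}$ are $\fg$-equivariant — through the degree-one Poisson action on each side — and $\gr(\phi^{-1}\circ\psi)$ intertwines these, being a Poisson isomorphism that is the identity in degree one by Lemma~\ref{t11} and Theorem~\ref{poison}; so the ``error'' $\beta\otimes w\mapsto\gr(\phi^{-1}\circ\psi)(t_{\beta,w}^{(r)})-\Delta_{\beta,w}^{(r)}$ is an $\fg$-module map vanishing on the elements of Lemma~\ref{t11}. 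For the step $r\to r+1$ one applies $\gr(\phi^{-1}\circ\psi)$ to the identity, read off from Lemma~\ref{poisson}, $\{t_{\beta_1,w_1}^{(2)},t_{\beta_2,w_2}^{(r)}\}=-\,\zeta(\beta_1,w_1)\cdot t_{\beta_2,w_2}^{(r+1)}+P\bigl(t_{\bullet,\bullet}^{(\le r)}\bigr)$, where $\zeta(\beta_1,w_1)\in\fg$ is the $\fg$-component of $\beta_1\otimes w_1\in\End V$ and $P$ is a fixed polynomial; using the inductive hypothesis in degrees $\le r$, Lemma~\ref{t11}, the matching identity among the $\Delta$'s (again the bracket formula above), and that the $\zeta(\beta_1,w_1)\cdot t_{\beta_2,w_2}^{(r+1)}$ span the degree-$(r+1)$ image of $V^*\otimes V$ (since $\fg$ is simple and the $\End_\fg V$-directions do not survive in degree $r+1$, cf. the proof of Proposition~\ref{filtr}), one gets the equality in degree $r+1$. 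Either route reduces, in the end, to the same computation of the Poisson brackets of matrix coefficients of $G_1[[t^{-1}]]$.
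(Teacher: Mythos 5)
Your argument is essentially the paper's: both reduce the claim to (i) the fact that the Poisson brackets of the $\Delta_{\beta,w}^{(r)}$ in $\mathcal{O}(G_1[[t^{-1}]])$ are given by the same $r$-matrix (Sklyanin) formula as the brackets of the $t_{\beta,w}^{(r)}$ computed in Lemma~\ref{poisson}, and (ii) agreement of $\gr(\phi^{-1}\circ\psi)$ with the assignment $t_{\beta,w}^{(r)}\mapsto\Delta_{\beta,w}^{(r)}$ on the Poisson-generating family of Lemma~\ref{t11} and Corollary~\ref{pgen}. The only divergence is that the step you single out as the main obstacle --- the bracket formula for matrix coefficients of $G_1[[t^{-1}]]$ --- is not derived in the paper but simply quoted from Proposition~2.13 of \cite{kamn}.
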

\begin{proof}
From Lemma \ref{poisson} and \cite[Proposition 2.13]{kamn}
we see that the Poisson bracket of $t_{\beta,v}^{(r)}$ in $\gr Y_{V}(\fg)$ is given by the same formulas as that of $\Delta_{\beta,w}^{(r)}$ in $\mathcal{O}(G_1[[t^{-1}]])$.
Moreover, from Lemma \ref{t11} we see that the assertion of the Proposition is true for the first and second Fourier components of the matrix elements corresponding to the highest and pre-highest vectors. Namely, we have
$$\phi^{-1}\circ\psi(t_{v_i^*, v_i}^{(1)})=\Delta_{v_i^*, v_i}^{(1)};$$
$$\phi^{-1}\circ\psi(t_{v_i^*, w_i}^{(1)})=\Delta_{v_i^*, w_i}^{(1)};$$
$$\phi^{-1}\circ\psi(t_{w_i^*, v_i}^{(1)})=\Delta_{w_i^*, v_i}^{(1)};$$
$$\phi^{-1}\circ\psi(t_{v_i^*, v_i}^{(2)})=\Delta_{v_i^*, v_i}^{(2)}.$$
On the other hand by Corollary~\ref{pgen} the elements $\Delta_{v_i^*, v_i}^{(1)}$, $\Delta_{v_i^*, w_i}^{(1)}$, $\Delta_{w_i^*, v_i}^{(1)}$ and $\Delta_{v_i^*, v_i}^{(2)}$ generate $\gr Y_{new}(\fg)$, so we are done.
\end{proof}

\subsection{ABC generators.}

The following Lemma is from \cite{abc}.
\begin{lem}
Any generation series $H_i(u)$ can be
represented in the following form
\hspace{2cm}
$$H_i(u)=\frac{\prod\limits_{s\neq i}\prod\limits_{r=1}^{-a_{si}}
A_s\big(u-\frac{1}{2}(\alpha_i+r\alpha_s,\alpha_s)\big)}
{A_i(u)A_i(u-\frac{1}{2}(\alpha_i,\alpha_i))}\,,$$\ \ \ where $A_i(u)$, $i=1,\ldots,n$ are  formal series
$A_i
(u)=1+\sum\limits_{r=1}^{\infty}a_i^{(r)}u^{-r}$.
\end{lem}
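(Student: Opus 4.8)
The plan is to follow the strategy by which Drinfeld's new generators $h_i^{(r)}$ are themselves built out of the lower ones via the $RTT$/quantum-determinant-type relations, realizing the series $A_i(u)$ as ``partial quantum determinants'' of the $T$-matrix restricted to the fundamental representation $V_{\omega_i}$. Concretely, I would first recall that, by Theorem~\ref{poison}, one has $\theta(H_i(u)) = \prod_j \Delta_{\omega_j,\omega_j}(u)^{-a_{ji}}$, so on the level of $\gr Y_{new}(\fg) = \mathcal{O}(G_1[[t^{-1}]])$ the statement is equivalent to the assertion that each $\Delta_{\omega_i,\omega_i}(u)$ can be written as a ratio of the form $A_i(u)A_i(u-\tfrac12(\alpha_i,\alpha_i))\cdot(\text{shifted products of }A_s)$; here one would check by a direct induction on the height in the weight lattice (using the factorization $a_{ji}$ of the Cartan matrix and the defining identity $\sum_j a_{ji}(\text{something}) $ that relates $\omega_i$ to the simple roots) that the system $\{H_i(u)\}$ determines and is determined by a system $\{A_i(u)\}$ of series in $u^{-1}$ with leading term $1$. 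The combinatorial heart is the observation that the matrix exponent $-a_{ji}$ together with the half-integer shifts $\tfrac12(\alpha_i+r\alpha_s,\alpha_s)$ precisely inverts the relation $\omega_i = \sum_j (C^{-1})_{ji}\alpha_j$ in a ``$q$-deformed'' (here: shift-deformed) sense.

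The key steps, in order, would be: (1) Write down the ansatz $A_i(u) = 1 + \sum_{r\ge 1} a_i^{(r)} u^{-r}$ and substitute into the claimed formula for $H_i(u)$; match coefficients of $u^{-1}$ to see that $h_i^{(1)}$ determines $a_j^{(1)}$ for all $j$ linearly, with the transition matrix being (a shift of) the inverse Cartan matrix, hence invertible. (2) Proceed by induction on $r$: assuming $a_j^{(s)}$ is determined for all $j$ and all $s<r$ by the $h_j^{(<r)}$, extract $a_j^{(r)}$ from the coefficient of $u^{-r}$ in the product formula — again the linear part of the system in the unknowns $a_j^{(r)}$ is governed by the (invertible) Cartan matrix, and the remaining terms involve only lower $a$'s, which are already known. (3) Conversely, given arbitrary $A_i(u)$ of the stated shape, the right-hand side of the formula is a well-defined element of $Y_{new}(\fg)[[u^{-1}]]$ with constant term $1$, so the correspondence $\{A_i(u)\}\leftrightarrow\{H_i(u)\}$ is a bijection. (4) Finally, one should verify that the resulting $a_i^{(r)}$ actually lie in the Cartan subalgebra $H$ and not just in some completion — this is immediate from step (2), since each $a_i^{(r)}$ is a polynomial in the $h_j^{(s)}$.

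The main obstacle, I expect, is pinning down the exact shift pattern in the numerator — i.e. proving that the shifts $u - \tfrac12(\alpha_i + r\alpha_s,\alpha_s)$ for $r = 1,\dots,-a_{si}$ are forced, rather than some other arrangement. This is where the precise normalization of the scalar product ($(\alpha_i,\alpha_i)=2$ for short roots, $d_i = (\alpha_i,\alpha_i)/2$) and the Drinfeld relations $[h_i^{(r+1)},e_j^{(s)}] - [h_i^{(r)},e_j^{(s+1)}] = \tfrac{(\alpha_i,\alpha_j)}{2}(h_i^{(r)}e_j^{(s)} + e_j^{(s)}h_i^{(r)})$ enter: the shifts are exactly the ones making $A_i(u)$ transform under $[\,\cdot\,, e_j^{(s)}]$ in a multiplicative (``shift by a scalar'') fashion. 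I would handle this by rewriting the Drinfeld relation for $H_i(u)$ in the generating-series form $H_i(u) E_j(v) = \frac{u-v+\frac12(\alpha_i,\alpha_j)}{u-v-\frac12(\alpha_i,\alpha_j)} E_j(v) H_i(u) + (\text{correction})$ and then checking that the proposed factorization of $H_i(u)$ into $A$'s is the unique one compatible with a similarly factorized action of each $E_j(v)$; the bookkeeping of which $A_s$ picks up which shift is then read off from the Cartan integers $a_{si}$. Granting the earlier results of the excerpt (in particular Theorem~\ref{poison} and the PBW theorem), the rest is a finite linear-algebra computation over $\mathbb{C}$.
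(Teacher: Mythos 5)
Your argument is correct, but note that the paper does not actually prove this Lemma at all: it is quoted verbatim from \cite{abc} (Gerasimov--Kharchev--Lebedev--Oblezin), so there is no in-text proof to match. Your coefficient-matching induction is a legitimate self-contained substitute: expanding each shifted factor as $A_s(u-c)=1+\sum_r a_s^{(r)}u^{-r}+(\text{terms involving }a_s^{(r')} ,\ r'<r)$, the coefficient of $u^{-r}$ in the right-hand side equals $-\sum_s a_{si}a_s^{(r)}$ plus a polynomial in the $a_j^{(r')}$ with $r'<r$ (the denominator contributing the $s=i$ term since $a_{ii}=2$), so the invertibility of the Cartan matrix lets you solve for the $a_s^{(r)}$ recursively; since the $h_j^{(s)}$ commute, the resulting $a_i^{(r)}$ are honest elements of $H$ and the series $A_i(u)$ are well defined. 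One remark: the ``main obstacle'' you single out --- that the shifts $u-\tfrac12(\alpha_i+r\alpha_s,\alpha_s)$ must be ``forced'' --- is not actually an issue for this statement, because the Lemma only asserts \emph{existence} of a representation in the given form, and the shifts affect only the lower-order terms that your induction absorbs anyway; the specific shift pattern becomes essential only later, for the commutation relations (\ref{abc})--(\ref{abc2}) satisfied by $A_i,B_i,C_i$, which are a separate claim imported from \cite{abc} and not part of this Lemma. So your appeal to the relation between $H_i(u)$ and $E_j(v)$ is unnecessary here, and your steps (1)--(4) already constitute a complete proof.
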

We define $$B_i(u) = d_i^{1/2} A_i(u) E_i(u) = \sum\limits_{r=1}^{\infty}b_i^{(r)}u^{-r},$$ $$C_i(u) = d_i^{1/2} F_i(u) A_i(u) = \sum\limits_{r=1}^{\infty}c_i^{(r)}u^{-r}.$$

According to \cite{abc} we have
\begin{gather}
\label{abc}
    [A_j(u), B_i(v)] = \delta_{ij} \frac{d_i}{u-v} \left(B_i(u)A_i(v) - B_i(v) A_i(u)\right) \\
    \label{abc2}
    [A_j(u), C_i(v)] = \delta_{ij} \frac{d_i}{u-v} \left(C_i(u)A_i(v) - C_i(v) A_i(u)\right),
\end{gather}
We refer the reader to \cite{abc} for more details on $A_i(u), B_i(u), C_i(u)$.

We have the following non-commutative analog of Lemma \ref{image}.
\begin{prop}
\label{lift}
For any $\omega_i$ we have \\
$\phi^{-1}\circ\psi(t_{11}(u)) = A_i(u-d_i), \\ \phi^{-1}\circ\psi(t_{12}(u)) = B_i(u-d_i), \\
\phi^{-1}\circ\psi(t_{21}(u)) = C_i(u-d_i).$
\end{prop}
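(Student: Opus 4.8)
The plan is to upgrade Lemma~\ref{t11}, which identifies the images of the lowest Fourier components $t_{11}^{(1)},t_{12}^{(1)},t_{21}^{(1)},t_{11}^{(2)}$ in $\gr Y_{new}(\fg)$, to a statement about the full generating series in $Y_{new}(\fg)$ itself. First I would record the relevant structural facts: the $RTT$-relations for $Y_V(\fg)$ restricted to the $2\times 2$ block spanned by $v_i,w_i$ inside $V_{\omega_i}$ give closed relations among $t_{11}(u),t_{12}(u),t_{21}(u)$ (plus the diagonal $t_{22}(u)$) that, after extracting the leading $\fsl_2$-triple $\{e_i,f_i,\tfrac1{d_i}h_i\}$, are formally the same as the $ABC$-relations \eqref{abc}, \eqref{abc2} of \cite{abc}. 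So the images $X_i(u):=\phi^{-1}\circ\psi(t_{11}(u))$, $Y_i(u):=\phi^{-1}\circ\psi(t_{12}(u))$, $Z_i(u):=\phi^{-1}\circ\psi(t_{21}(u))$ are a triple of formal series in $Y_{new}(\fg)[[u^{-1}]]$ satisfying the same quadratic relations with each other (and with $t_{22}(u)$) as $A_i(u),B_i(u),C_i(u)$ do.

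Next I would pin down $X_i(u),Y_i(u),Z_i(u)$ by a degree/PBW argument. By Proposition~\ref{filtr} the map $\phi^{-1}\circ\psi$ is a filtered isomorphism, so $X_i(u)=1+\sum_r X_i^{(r)}u^{-r}$ with $X_i^{(r)}\in Y_{new}^{(r)}$, and similarly $Y_i^{(r)},Z_i^{(r)}\in Y_{new}^{(r)}$. By Lemma~\ref{t11} and Proposition~\ref{image}, the symbols of $X_i^{(r)},Y_i^{(r)},Z_i^{(r)}$ in $\gr Y_{new}(\fg)=\mathcal{O}(G_1[[t^{-1}]])$ equal $\Delta_{11}^{(r)},\Delta_{12}^{(r)},\Delta_{21}^{(r)}$, which by Theorem~\ref{poison} are precisely the symbols of $a_i^{(r)},b_i^{(r)},c_i^{(r)}$ (the series $A_i(u)$ is built from $\Delta_{11}(u)$ via the relation of Lemma 2.25 together with $\theta(H_i(u))=\prod_j\Delta_{jj}(u)^{-a_{ji}}$, and $\theta(B_i(u))=d_i^{1/2}\theta(A_i(u))\theta(E_i(u))$ equals $\Delta_{21}(u)$ by Theorem~\ref{poison}; likewise for $C_i$). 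Hence $X_i^{(r)}-a_i^{(r)}$, $Y_i^{(r)}-b_i^{(r)}$, $Z_i^{(r)}-c_i^{(r)}$ all lie in $Y_{new}^{(r-1)}$.

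The final step is an induction on $r$ showing these differences vanish. Suppose $X_i^{(s)}=a_i^{(s)}$, $Y_i^{(s)}=b_i^{(s)}$, $Z_i^{(s)}=c_i^{(s)}$ for $s<r$. Both $(X_i(u),Y_i(u),Z_i(u))$ and $(A_i(u),B_i(u),C_i(u))$ satisfy the same $RTT$/$ABC$ quadratic relations; subtracting these relations at order $u^{-r}v^{-1}$ (or an analogous bidegree isolating the $r$-th component) and using the inductive hypothesis expresses $X_i^{(r)}-a_i^{(r)}$ etc. as commutators of already-identified lower-degree elements — e.g. the relation $[A_i(u),B_i(v)]=\frac{d_i}{u-v}(B_i(u)A_i(v)-B_i(v)A_i(u))$ lets one recover $b_i^{(r)}$ from $b_i^{(1)},\dots,b_i^{(r-1)}$ and $a_i^{(\leq r)}$ via the commutator with $h_i^{(2)}$-type elements, exactly as in Remark~\ref{gennew} and the last paragraph of the proof of Proposition~\ref{filtr}. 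Since the two triples agree through degree $r-1$ and satisfy identical recursions, they agree in degree $r$. I expect the main obstacle to be bookkeeping: verifying that the relevant components of the $2\times2$-block $RTT$-relations genuinely reduce, after the leading-term normalization $x_{\alpha_i}^\pm=-d_i^{1/2}e_i,-d_i^{1/2}f_i$, to the precise $ABC$-relations of \cite{abc} (including getting the $d_i$-factors and the shift conventions right), and checking that the recursion indeed determines the $r$-th component uniquely given the lower ones rather than only up to something central — this is where one must invoke the PBW basis (Theorem~2.4) and the maximality of $H$ (Proposition~\ref{cartan}) to rule out ambiguity.
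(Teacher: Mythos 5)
Your overall strategy — reduce to the $ABC$-relations on the $2\times2$ block, then bootstrap by induction using commutators and the maximality of $H$ — is the same as the paper's, and you correctly anticipate the two key tools (the relations \eqref{abc}, \eqref{abc2} and Proposition~\ref{cartan}). But there is a genuine gap at the base of your induction, and it is exactly the point where the paper does its real work. Your input data are: the degree-$1$ identifications from Lemma~\ref{t11} (which are exact), and the statement that the \emph{symbols} of $X_i^{(r)},Y_i^{(r)},Z_i^{(r)}$ agree with those of $a_i^{(r)},b_i^{(r)},c_i^{(r)}$ (Proposition~\ref{image}). The latter only controls the top-degree part; the whole content of Proposition~\ref{lift} beyond Proposition~\ref{image} is the lower-degree tail. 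Your induction then needs $X_i^{(2)}=a_i^{(2)}$ \emph{exactly}, since every recursion you invoke (e.g.\ $b_i^{(r+1)}=b_i^{(1)}a_i^{(r)}-b_i^{(r)}a_i^{(1)}-\tfrac1{d_i}[a_i^{(2)},b_i^{(r)}]$) is driven by commutators with $a_i^{(2)}$. But no relation among the already-identified degree-$1$ elements can produce $a_i^{(2)}$: by Remark~\ref{gennew}, $h_i^{(2)}$ (equivalently $a_i^{(2)}$) is an independent generator that must be adjoined to the degree-$1$ generators, and $[A_i(u),A_j(v)]=0$ gives no recursion for the diagonal series. At $r=2$ the relation $[X_i^{(2)},b_i^{(1)}]=-d_iY_i^{(2)}$ cannot be "subtracted" from $[a_i^{(2)},b_i^{(1)}]=-d_ib_i^{(2)}$ because $Y_i^{(2)}=b_i^{(2)}$ is not yet known — the two unknowns are coupled, and the ambiguity $X_i^{(2)}-a_i^{(2)}\in\fh^{(1)}\oplus\bc$ shifts $Y_i^{(2)}$ by a multiple of $b_i^{(1)}$, which is invisible at the level of symbols.

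The paper closes this gap by brute force: it computes $\phi^{-1}\circ\psi(t_{11}^{(2)})$ \emph{including all degree-$\le1$ correction terms} from the explicit formula for the second coefficient of the universal $R$-matrix (the long display in the proof, refining Lemma~\ref{t11}, which was stated only "up to degree $1$ terms"), verifies that the resulting element of $H$ satisfies $[\,\cdot\,,b_j^{(1)}]=-\delta_{ij}d_ib_i^{(2)}$, and observes that $a_i^{(2)}$ is the unique element of $H$ with this property. Once that anchor is in place, your induction does essentially go through, and your final step for the diagonal series (use that the image of $t_{11}(u)$ lies in $H$ and that $H\cap\bigcap_j\ker\operatorname{ad}b_j^{(1)}=\{0\}$) is precisely what the paper does. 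So the proposal is not wrong in outline, but it is incomplete: you must either carry out the explicit $R$-matrix computation for $t_{11}^{(2)}$ or supply some other exact (not merely associated-graded) identification of it, and the $ABC$-relations alone will not do that for you.
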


\begin{proof}
Let $a_i^{(k)}, b_i^{(k)}, c_i^{(k)}$ be $k$-th coefficient of the series $A_i(u), B_i(u), C_i(u)$ respectively and let $\tilde a_i^{(k)}, \tilde b_i^{(k)}, \tilde c_i^{(k)}$ be $k$-th coefficient of the series $A_i(u-d_i), B_i(u-d_i), C_i(u-d_i)$ respectively.
We have
$$t_{11}^{(1)} \mapsto a_i^{(1)} = \tilde a_i^{(1)}, \ t_{12}^{(1)} \mapsto b_i^{(1)} = \tilde b_i^{(1)}, \ t_{21}^{(1)} \mapsto c_i^{(1)} = \tilde c_i^{(1)}.$$
Now let us compute the image of $t_{11}^{(2)}$ by taking into account the degree 1 terms in the computation of Lemma~\ref{t11}. From~(\ref{R2}) and Theorem~\ref{phi} we have
\begin{eqnarray*}\ \phi^{-1}\circ\psi(t_{11}^{(2)}) = 
-\sum_{j} b_{ij} h_j^{(2)} - \frac{1}{4} \sum_j \left(b_{ij}\left(\sum_{\alpha \in \Phi^+} (\alpha, \alpha_j) (x_{\alpha}^+ x_{\alpha}^- + x_{\alpha}^- x_{\alpha}^+)\right) - 2b_{ij}(h_j^{(1)})^2\right) + \\ + (\frac{1}{4} \sum_{\alpha \in \Phi^+} \frac{1}{d_i}(\alpha,\alpha_i)(\alpha,\omega_i) +\frac{1}{2}d_i) t_{\omega_i}+ \frac{1}{2} \sum_{\alpha \in \Phi^+}\sum_j b_{ij}(\alpha, \alpha_j) x_{\alpha}^- x_{\alpha}^+ + \frac{1}{2}t_{\omega_i}^2=\\=
-\sum_{j} b_{ij} h_j^{(2)} -\frac{1}{4} \sum_j b_{ij}\left(\sum_{\alpha \in \Phi^+} (\alpha, \alpha_j) (x_{\alpha}^+ x_{\alpha}^- - x_{\alpha}^- x_{\alpha}^+)\right) + \sum_j 2b_{ij}(h_j^{(1)})^2 + \\ + \frac{1}{4} \left(\sum_{\alpha \in \Phi^+} \frac{1}{d_i}(\alpha,\alpha_i)(\alpha,\omega_i) +\frac{1}{2}d_i \right) t_{\omega_i}+ \frac{1}{2}t_{\omega_i}^2 =\\= -\sum_{j} b_{ij} h_j^{(2)} - \frac{h^{\vee}}{4}t_{\omega_i} + \dfrac{1}{2}\sum_j b_{ij}(h_j^{(1)})^2 + \left(\frac{h^{\vee}}{4} +\frac{1}{2}d_i\right) t_{\omega_i} + \frac{1}{2}t_{\omega_i}^2=\\=-\sum_{j} b_{ij} h_j^{(2)} + \dfrac{1}{2}\sum_j b_{ij}(h_j^{(1)})^2 + \frac{1}{2}d_i t_{\omega_i} + \frac{1}{2}t_{\omega_i}^2.
\end{eqnarray*}
Here $h^{\vee} \in \bz_{>0}$ is the dual Coxeter number of $\fg$.
Let $\tilde a_i^{(k)}$ be the $k$-th coefficient of the series $\tilde A_i(u) = A(u-d_i)$.
Analogously $\tilde b_i^{(k)}$ be the $k$-th coefficient of the series $\tilde B_i(u) = B(u-d_i)$.
Then $\tilde b_i^{(1)} = e_i^{(1)}$ and $\tilde b_i^{(2)} = b_i^{(2)} + d_i b_i^{(1)} = e_i^{(2)} + a_i^{(1)} e_i^{(1)} + d_i e_i^{(1)}$.

We claim that the element $\phi^{-1}\circ\psi(t_{11}^{(2)})$ satisfies the commutation relation  $$[\phi^{-1}\circ\psi(t_{11}^{(2)}),\tilde b_j^{(1)}]=-\delta_{ij}d_i \tilde b_i^{(2)},
$$
and it is the only element from the Cartan subalgebra $H$ with this property is $\tilde a_i^{(2)}$. 
Indeed, we have the following commutation relations
$$
[h_i^{(2)}, e_j^{(1)}] = \frac{(\alpha_i,\alpha_j)}{2}(h_i^{(1)} e_j^{(1)} + e_j^{(1)} h_i^{(1)}) + (\alpha_i,\alpha_j)e_j^{(2)} 
$$
$$[(h_j^{(1)})^2, e_i^{(1)}] = (\alpha_i,\alpha_j)(h_j^{(1)} e_i^{(1)} + e_i^{(1)}h_j^{(1)})$$
$$[t_{\omega_i}, e_j^{(1)}] = \delta_{ij} d_i e_i^{(1)}$$
$$[t_{\omega_i}^2, e_j^{(1)}] = \delta_{ij} d_i (t_{\omega_i} e_i^{(1)} + e_i^{(1)} t_{\omega_i})$$

Using this we have
$$[\phi^{-1}\circ\psi(t_{11}^{(2)}),\tilde b_j^{(1)}] = -\delta_{ij} d_i( e_i^{(2)} + a_i^{(1)} e_i^{(1)} + d_ie_i^{(1)} )$$

Hence we have  
$$\ t_{11}^{(2)} \mapsto \tilde a_i^{(2)}.$$

The commutation relations between 
$t_{11}(u)$ and $t_{12}(v)$ (resp. $t_{21}(v)$) are the same as between $A_i(u-d_i)$ and $B_i(v-d_i)$ (resp. $C_i(v-d_i)$). Using induction on $r$ by taking commutator with $\tilde a_i^{(2)}$ on each step we prove that $t_{12}^{(r)}$ and $t_{21}^{(r)}$ go to the $r$-th coefficients of the series $B_i(u-d_i)$ and $C_i(u-d_i)$, respectively. 

Now it remains to show that $t_{11}^{(r)}$ maps to the $r$-th Fourier coefficient of $A_i(u-d_i)$. The image of $t_{11}(u)$ lies in the Cartan subalgebra $H$, see Proposition \ref{cartan}. Moreover, we have 
$[\phi^{-1}\circ\psi(t_{11}(u)), \tilde b_i^{(1)}] = \delta_{ij} B_i(u-d_i)$. Since $H \cap \bigcap_i \left( \ker {\rm ad}\, \tilde b_i^{(1)} \right) = \{0\}$, this implies that $\phi^{-1}\circ\psi(t_{11}(u))=A_i(u-d_i)$. 



\end{proof}

\begin{rem}
We thank Alex Weeks for pointing out the mistake in the original statement of Proposition, namely, the shift by $d_i$ was missed. For another proof of Proposition, see \cite{fww}.
\end{rem}

In \cite{kamn2} the authors define the elements $T_{\beta,w}(u)\in Y_{new}(\fg)[[u^{-1}]]$ which are uniquely determined by the following property $T_{v_{i},v_{i}}(u) = A_i(u)$ and
\begin{align*}
[b_j^{(1)}, T_{\beta, \gamma}(u) ] = T_{x_{\alpha_j}^- \beta,\gamma}(u) - T_{\beta, x_{\alpha_j}^+\gamma}(u) \\
[c_j^{(1)}, T_{\beta, \gamma}(u)] = T_{ x_{\alpha_j}^+ \beta, \gamma}(u) - T_{\beta,  x_{\alpha_j}^-\gamma}(u)
\end{align*}
for all $i, j\in \{1, \ldots, n\}$.
We have the following corollary from Proposition~\ref{lift} 
\begin{cor} $\phi^{-1}\circ\psi(t_{\beta,w}(u))=T_{\beta,w}(u-d_i)$.
\end{cor}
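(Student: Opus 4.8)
The plan is to deduce the corollary from Proposition~\ref{lift} together with the defining property of the elements $T_{\beta,w}(u)$ introduced in \cite{kamn2}. The key observation is that the collection of series $\{t_{\beta,w}(u)\}$, after applying $\phi^{-1}\circ\psi$, satisfies exactly the recursion that characterizes the $T_{\beta,w}(u)$ uniquely; so the two families must coincide. Thus the proof is a matching-of-presentations argument, and the only real content is checking the base case and the commutation relations.

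First I would recall that, by Proposition~\ref{lift}, for each fundamental weight $\omega_i$ we have $\phi^{-1}\circ\psi(t_{v_i^*,v_i}(u)) = A_i(u) = T_{v_i,v_i}(u)$, which establishes the normalization condition. Next I would compute, using the $RTT$ relation of Definition~\ref{extyangian} (equivalently Lemma~\ref{poisson} lifted to the non-commutative level) how $t_{12}^{(1)} = t_{v_i^*,w_i}^{(1)}$ and $t_{21}^{(1)} = t_{w_i^*,v_i}^{(1)}$ act by commutator on the series $t_{\beta,\gamma}(u)$. By Proposition~\ref{lift} these elements go to $b_i^{(1)}$ and $c_i^{(1)}$ respectively (up to the signs already recorded), and the commutator $[t_{21}^{(1)}\otimes 1, T_1(u)T_2(v)]$-type computation inside $\End(V)^{\otimes 2}\otimes Y_V(\fg)$ shows that $[\,b_j^{(1)}, \phi^{-1}\circ\psi(t_{\beta,\gamma}(u))\,]$ equals $\phi^{-1}\circ\psi(t_{x_{\alpha_j}^-\beta,\gamma}(u)) - \phi^{-1}\circ\psi(t_{\beta,x_{\alpha_j}^+\gamma}(u))$, since $b_j^{(1)}$ is (a scalar multiple of) a root vector $x_{\alpha_j}^{-}$ sitting in $\fg^{(1)}\subset Y_V^{(1)}$ and the $RTT$ relation implies that the adjoint action of $\fg^{(1)}$ on the matrix $T(u)$ is the natural action of $\fg$ on $\End(V)\simeq V\otimes V^*$. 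The analogous statement for $c_j^{(1)}$ and $x_{\alpha_j}^{+}$ follows the same way.

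Having verified that the family $\phi^{-1}\circ\psi(t_{\beta,w}(u))$ satisfies both the normalization $\phi^{-1}\circ\psi(t_{v_i^*,v_i}(u)) = A_i(u)$ and the two commutator recursions that define $T_{\beta,w}(u)$ in \cite{kamn2}, I would invoke the uniqueness statement of that construction: the matrix entries $t_{\beta,w}(u)$ with $\beta,w$ ranging over a basis of $V^*_{\omega_i}$ and $V_{\omega_i}$ are obtained from $t_{v_i^*,v_i}(u)$ by iterated brackets with $b_j^{(1)}$ and $c_j^{(1)}$ (because $V_{\omega_i}$ is generated from its highest weight vector under $\fg$, and dually for $V^*_{\omega_i}$), hence are pinned down by these two relations together with the base value. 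Therefore $\phi^{-1}\circ\psi(t_{\beta,w}(u)) = T_{\beta,w}(u)$ for all $\beta, w$.

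The main obstacle is the second step: one must be careful that the $RTT$ relation really does produce precisely the commutator formulas of \cite{kamn2} with the correct identification of $b_j^{(1)}, c_j^{(1)}$ with root vectors acting on $\End V$, including matching the normalizations $x_{\alpha_i}^{\pm} = -d_i^{1/2}e_i, -d_i^{1/2}f_i$ and the resulting signs from Proposition~\ref{lift}; any sign or scalar discrepancy would break the uniqueness argument. Once the bracket with $b_j^{(1)}$ (resp. $c_j^{(1)}$) is shown to implement the action of $x_{\alpha_j}^-$ (resp. $x_{\alpha_j}^+$) on the first tensor factor minus its action on the second, which is exactly how $\fg^{(1)}$ acts on $T(u)$ by the $RTT$ relation, the rest is formal. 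I do not expect the base case or the uniqueness invocation to present difficulties, since they are immediate from Proposition~\ref{lift} and the cited construction respectively.
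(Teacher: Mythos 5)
Your proposal is correct and is essentially the argument the paper intends: the paper states this as an immediate corollary of Proposition~\ref{lift} with no written proof, the implicit reasoning being exactly your three steps (base case $\phi^{-1}\circ\psi(t_{v_i^*,v_i}(u))=A_i(u)$ from Proposition~\ref{lift}, the commutator recursion with $b_j^{(1)},c_j^{(1)}$ extracted from the $RTT$ relation, and the uniqueness characterization of $T_{\beta,w}(u)$ from \cite{kamn2}). Your explicit flagging of the sign/normalization matching is the only nontrivial verification, and you have correctly identified it as such.
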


\section{Yangian of Levi subalgebra} \label{levisub}
Let $Q$ be the Dynkin diagram of $\fg$. To any Dynkin subdiagram $I\subset Q$ we assign a Levi subalgebra $\fl$ generated by the Cartan subalgebra and the simple root elements corresponding to $I$. Denote by $\Phi^+_I$ the corresponding set of positive roots.
It is easy to see that $\fl$ is a reductive Lie algebra. We denote by $\fg_I$ the semisimple part of $\fl$, i.e. $\fg_I= [\fl,\fl]$.

We define the Yangian of $\fl$ as the subalgebra of $Y_{new}(\fg)$  generated by $e_i^{(r)}, f_i^{(r)}$ for $i \in I, r \in \mathbb{Z}_{\geqslant 0}$ and  $h_i^{(r)}, r \geq 1, i=1, \ldots ,n$. We denote this subalgebra by $Y_{new}(\fl)$. 

\begin{prop} $Y_{new}(\fl)$ is the tensor product $Y_{new}(\fg_I)\otimes \bc[a_j^{(r)}]_{j\in \Delta \setminus I, r\in\bz_{>0}}$ such that $\deg a_j^{(r)} = r$.
\end{prop}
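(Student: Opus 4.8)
The plan is to identify $Y_{new}(\fg_I)$ with a subalgebra of $Y_{new}(\fg)$, to exhibit the coefficients $a_j^{(r)}$ with $j\notin I$ as a central polynomial complement, and then to verify the tensor decomposition after passing to the associated graded with respect to the degree filtration, where $\gr Y_{new}(\fg)$ is a polynomial ring (Theorem~2.4) and the subalgebras in sight become coordinate subalgebras. First, since the defining relations of the Yangian of $\fg_I$ involve only the submatrix $(a_{ij})_{i,j\in I}$ of the Cartan matrix of $\fg$, the assignment $e_i^{(r)},f_i^{(r)},h_i^{(r)}\mapsto e_i^{(r)},f_i^{(r)},h_i^{(r)}$ ($i\in I$) is a filtered algebra homomorphism $Y_{new}(\fg_I)\to Y_{new}(\fg)$; by Theorem~2.4, applied to $\fg_I$ and to $\fg$, it carries the PBW basis of $Y_{new}(\fg_I)$ to a subset of that of $Y_{new}(\fg)$, so it is injective, and I identify $Y_{new}(\fg_I)$ with its image $\mathcal{Y}_I$ (generated already by the $e_i^{(r)},f_i^{(r)}$, $i\in I$, since $h_i^{(r)}=[e_i^{(r)},f_i^{(1)}]$). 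As the PBW basis of $\mathcal{Y}_I$ sits inside that of $Y_{new}(\fg)$ with matching degrees, the intrinsic and induced filtrations on $\mathcal{Y}_I$ coincide, and $\gr\mathcal{Y}_I$ is the coordinate subalgebra $\bc[\overline{e_\alpha^{(r)}},\overline{h_i^{(r)}},\overline{f_\alpha^{(r)}}:\alpha\in\Phi^+_I,\ i\in I]$ of $\gr Y_{new}(\fg)$.

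Second, I would check that $\bc[a_j^{(r)}:j\notin I]$ centralizes $\mathcal{Y}_I$. The $a_i^{(r)}$ all lie in the commutative Cartan subalgebra $H$ (proof of Proposition~\ref{lift}), so for $j\notin I$ the element $a_j^{(r)}$ commutes with every $h_i^{(s)}$ and every $a_i^{(s)}$; and since $i\neq j$ for $i\in I$, relations \eqref{abc} and \eqref{abc2} give $[A_j(u),B_i(v)]=[A_j(u),C_i(v)]=0$, whence, using $[A_j(u),A_i(v)]=0$, the invertibility of $A_i(v)$ in $Y_{new}(\fg)[[v^{-1}]]$ and the identities $E_i(v)=d_i^{-1/2}A_i(v)^{-1}B_i(v)$, $F_i(v)=d_i^{-1/2}C_i(v)A_i(v)^{-1}$, also $[A_j(u),E_i(v)]=[A_j(u),F_i(v)]=0$. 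Hence $a_j^{(r)}$ ($j\notin I$) commutes with all generators of $\mathcal{Y}_I$, so multiplication is a homomorphism of filtered algebras $m\colon Y_{new}(\fg_I)\otimes\bc[a_j^{(r)}:j\in\Delta\setminus I,\ r\geq1]\to Y_{new}(\fg)$ (where $\deg a_j^{(r)}=r$) with image contained in $Y_{new}(\fl)$, and the associated graded of its source is $\gr\mathcal{Y}_I\otimes\bc[\overline{a_j^{(r)}}:j\notin I]$.

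The heart of the matter is to analyse $\gr m$ in the polynomial ring $\gr Y_{new}(\fg)$. By Lemma~\ref{t11} in degree one, and by the decomposition of $H_i(u)$ through the $A_s(u)$ of \cite{abc} in general (whose top-degree part is, in every degree, the same linear relation, the shifts contributing only lower-degree terms), each symbol $\overline{a_s^{(r)}}$ is a linear combination of $\overline{h_1^{(r)}},\dots,\overline{h_n^{(r)}}$, and conversely $\overline{h_i^{(r)}}=-\sum_s a_{si}\,\overline{a_s^{(r)}}$; since the Cartan matrix of $\fg$ is nonsingular, $\{\overline{a_s^{(r)}}:1\le s\le n\}$ is a basis of $\operatorname{span}\{\overline{h_l^{(r)}}:1\le l\le n\}$. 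Replacing $\overline{a_i^{(r)}}$ by $\overline{h_i^{(r)}}$ for $i\in I$ again gives a basis (the transition matrix is block triangular with $I$-diagonal block the nonsingular Cartan matrix of $\fg_I$), so $\{\overline{e_\alpha^{(r)}},\overline{f_\alpha^{(r)}}:\alpha\in\Phi^+_I\}\cup\{\overline{h_i^{(r)}}:i\in I\}\cup\{\overline{a_j^{(r)}}:j\notin I\}$ is part of a polynomial coordinate system of $\gr Y_{new}(\fg)$; it generates a coordinate subalgebra $\mathcal{C}$ equal to $\gr\mathcal{Y}_I\otimes\bc[\overline{a_j^{(r)}}:j\notin I]$. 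Thus $\gr m$ maps $\gr\mathcal{Y}_I\otimes\bc[\overline{a_j^{(r)}}]$ isomorphically onto $\mathcal{C}=\gr(\operatorname{im}m)$, so $m$ is injective. For surjectivity onto $Y_{new}(\fl)$: that algebra is generated by the $e_i^{(r)},f_i^{(r)}$ ($i\in I$) and all $h_l^{(r)}$, so $\gr Y_{new}(\fl)$ is generated by the corresponding symbols, and since $\overline{h_j^{(r)}}$ for $j\notin I$ is a linear combination of the $\overline{h_i^{(r)}}$ ($i\in I$) and $\overline{a_j^{(r)}}$ ($j\notin I$), all these lie in $\mathcal{C}=\gr(\operatorname{im}m)\subseteq\gr Y_{new}(\fl)$; hence $\gr Y_{new}(\fl)=\gr(\operatorname{im}m)$, the filtered inclusion $\operatorname{im}m\subseteq Y_{new}(\fl)$ is an equality, and $m$ is the required filtered isomorphism.

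The step I expect to need the most care is the passage to the associated graded behind the first and third parts: verifying that $\gr Y_{new}(\fg)$ is genuinely the polynomial algebra on the PBW symbols, that $\gr\mathcal{Y}_I$ and $\gr Y_{new}(\fl)$ are the evident coordinate subalgebras (i.e.\ that the intrinsic and induced filtrations on $\mathcal{Y}_I$ agree), and --- most of all --- computing the symbols $\overline{a_j^{(r)}}$ in all degrees: for this one must follow the shift arguments in the $A$-series presentation of \cite{abc} and see that modulo lower-degree terms they collapse to the linear relation of Lemma~\ref{t11}. With those inputs, what remains is a linear change of coordinates in a polynomial ring.
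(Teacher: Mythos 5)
Your argument is in essence the paper's: the proof in the text is only two sentences --- replace the generators $H_j(u)$ by $A_j(u)$, and observe that the Fourier coefficients of $A_j(u)$ for $j\in\Delta\setminus I$ commute with $e_i^{(r)},f_i^{(r)}$ for $i\in I$ --- which is exactly your use of (\ref{abc})--(\ref{abc2}) together with $E_i(v)=d_i^{-1/2}A_i(v)^{-1}B_i(v)$ and $F_i(v)=d_i^{-1/2}C_i(v)A_i(v)^{-1}$. What you add, and what the paper leaves implicit, is the verification that the multiplication map is injective; your route through the associated graded and the block-triangular change of basis between the symbols $\overline{h_l^{(r)}}$ and $\overline{a_s^{(r)}}$ (nonsingular because the $I\times I$ block of the Cartan matrix is the Cartan matrix of $\fg_I$) is sound and is a genuine gain in rigor over the printed proof.

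One step does not follow as written: in the surjectivity argument you infer from ``$Y_{new}(\fl)$ is generated by $e_i^{(r)},f_i^{(r)}\ (i\in I)$ and all $h_l^{(r)}$'' that $\gr Y_{new}(\fl)$ is generated by the corresponding symbols. That inference is automatic only for the \emph{intrinsic} filtration on $Y_{new}(\fl)$ (the one generated by assigning degrees to its own generators), whereas the filtration you actually use is the one induced from $Y_{new}(\fg)$; the coincidence of the two is essentially part of what is being proved, so as stated the step is circular --- you took care of precisely this point for $\mathcal{Y}_I$ via PBW, but not for $Y_{new}(\fl)$. The repair is to prove surjectivity directly on generators, bypassing $\gr$: the $e_i^{(r)},f_i^{(r)}$ ($i\in I$) lie in $\operatorname{im}m$ by construction, and for the $h_l^{(r)}$ it suffices to show $a_s^{(r)}\in\operatorname{im}m$ for $s\in I$. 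This follows by solving the $A$--$H$ relation of \cite{abc}: taking formal logarithms (all series involved have constant term $1$) and using that the $I\times I$ block of the Cartan matrix is invertible over $\mathbb{Q}$, an induction on $r$ (the shifts in the arguments contribute only lower-order Fourier coefficients) expresses $a_s^{(r)}$, $s\in I$, as a polynomial in the $h_i^{(k)}$ ($i\in I$, $k\le r$), which lie in $\mathcal{Y}_I$, and the $a_j^{(k)}$ ($j\notin I$, $k\le r$). With this substitution your proof is complete and coincides in substance with the paper's.
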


\begin{proof}
Indeed, we can replace the generators $H_j(u)$ by $A_j(u)$. The Fourier coefficients of $A_j(u)$, $j\in\Delta\backslash I$ commute with $e_i^{(r)}$, $f_i^{(r)}$ for all $i\in I$.
\end{proof}

Let us describe the Yangian of $\fl$ in the $RTT$-realization i.e. as a subalgebra $Y_V(\fl)\subset Y_V(\fg)$. 
Note that $\fl = \fz_\fg(x)$ for $x = \sum_{j \in \Delta \backslash I} a_j^{(1)}$ and $x$ is a central element of $Y_{new}(\fl)$. Using the element $x$ one can decompose $V$ into the sum of $Y_{new}(\fl)$-submodules $V = V_I \oplus W$ in the following way. Let $\lambda$ be the eigenvalue of $x$ on the highest vector $v_i$ of $V(\omega_i,0)$ .  Let $V_I^i$ be the eigenspace of $x$ with the eigenvalue $\lambda$. Note that $V_I^i \subset V(\omega_i,0)$ is the subrepresentation generated by $Y_{new}(\fl)$ from the highest vector of $V(\omega_i,0)$: indeed, $V(\omega_i,0)$ is generated from the highest vector $v_i$ by the action of the lowering operators $f_\alpha^{(r)}$, so, since $\alpha(x) \geq 0$ for any $\alpha \in \Phi^+$ and $[x,f_\alpha^{(r)}]=-\alpha(x)f_\alpha^{(r)}$, the eigenvalues of $x$ on $V(\omega_i,0)$ belong to $\lambda-\mathbb{Z}_{\geq0}$. Moreover, the eigenvalue $\lambda$ is reached only on $Y_{new}(\fl)v_i$, since $\alpha(x) > 0$ for any $\alpha \in \Phi^+ \backslash \Phi^+_I$. Clearly, $V_I^i$ is a direct summand in $V(\omega_i,0)$ i.e. we have a decomposition $V(\omega_i,0) = V_I^i \oplus W_i$ as $Y_{new}(\fl)$-modules. We put $V_I = \oplus_i V_I^i$ and $W = \oplus_i W_i$. 

\begin{prop}
$V_I^i$ for $i \in I$ is the fundamental representation $V_I(\omega_i,0)$ of $Y_{new}([\fl,\fl])$.
\end{prop}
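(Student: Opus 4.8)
The plan is to identify $V_I^i$ as a highest weight module for $Y_{new}(\fg_I)$ and then pin down the associated Drinfeld polynomials to see that it is exactly the fundamental representation $V_I(\omega_i,0)$. First I would observe that, by construction, $V_I^i = Y_{new}(\fl)v_i$ where $v_i$ is the highest weight vector of $V(\omega_i,0)$, and that $e_j^{(r)}v_i = 0$ for all $j\in I$ and all $r$ (indeed $e_j^{(r)}$ raises the $x$-eigenvalue for $j\in\Delta\setminus I$ and kills $v_i$ for $j\in I$ already inside $V(\omega_i,0)$, and the PBW argument shows $V_I^i$ is spanned by monomials in the $f_\alpha^{(r)}$, $\alpha\in\Phi^+_I$, applied to $v_i$). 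Hence $V_I^i$ is a highest weight $Y_{new}(\fl)$-module, and restricting to the subalgebra $Y_{new}(\fg_I)\subset Y_{new}(\fl)$ (the tensor factor from the previous Proposition, generated by $e_j^{(r)},f_j^{(r)},h_j^{(r)}$ for $j\in I$) it is a highest weight $Y_{new}(\fg_I)$-module with highest weight vector $v_i$.

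Next I would compute the highest weight, i.e. the eigenvalues of $h_j^{(r)}$ on $v_i$ for $j\in I$. Here the key input is Theorem~\ref{poison} (or rather the classical shadow plus the explicit highest weight of $V(\omega_i,0)$): $v_i$ is annihilated by all $e_j^{(r)}$, and $h_j^{(1)}v_i=(\alpha_j,\omega_i)v_i=\delta_{ij}d_i v_i$ (using $(\alpha_j,\omega_i)=\delta_{ij}d_i$), while $h_j^{(r)}v_i=0$ for $r\ge 2$ since $V(\omega_i,0)$ is a fundamental representation with Drinfeld polynomial $P_i(u)=u$ (so $H_j(u)$ acts on $v_i$ by $1$ for $j\ne i$ and by a rational function with the minimal possible expansion for $j=i$). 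Translating via the defining relation $\frac{P_j(u+d_j)}{P_j(u)}=1+\sum_r \lambda_{j,r}u^{-r-1}$ restricted to $j\in I$, the module $V_I^i$ has associated polynomials $P_j^I(u)=1$ for $j\in I\setminus\{i\}$ and $P_i^I(u)=u$ (when $i\in I$). By Definition~2.10 applied to $\fg_I$, this is precisely the fundamental representation $V_I(\omega_i,0)$ of $Y_{new}(\fg_I)$ — once we know $V_I^i$ is irreducible as a $Y_{new}(\fg_I)$-module.

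The main obstacle I anticipate is establishing this irreducibility: a priori $V_I^i$ is merely a quotient of the Verma-type highest weight module, so I must show no proper submodule appears. I would argue as follows. The restriction of $V_I^i$ to $\fg_I$ (not the Yangian) decomposes with $v_i$ generating an irreducible $\fg_I$-summand of highest weight equal to the restriction of $\omega_i$; I claim this restriction is the $i$-th fundamental weight of $\fg_I$ when $i\in I$, because $\langle \omega_i,\alpha_j^\vee\rangle=\delta_{ij}$ for $j\in I$. Since $V(\omega_i,0)|_{\fg}$ contains $V_{\omega_i}$ with multiplicity one (the leading summand in the displayed decomposition $V(\omega_i,0)=V_{\omega_i}\oplus\bigoplus_{\mu<\omega_i}V_\mu^{\oplus k_\mu}$), and $V_I^i$ is the $\lambda$-eigenspace of $x$ which meets $V_{\omega_i}$ exactly in its $\fg_I$-irreducible top piece, the $\fg_I$-module $V_I^i$ has a one-dimensional highest weight space. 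Combined with finite-dimensionality, a standard argument (the classification of finite-dimensional irreducibles of $Y_{new}(\fg_I)$, Theorem~2.9, together with the fact that a finite-dimensional highest weight module with the Drinfeld polynomials of a fundamental representation and one-dimensional highest weight space must be irreducible — equivalently, comparing dimensions with the known $\dim V_I(\omega_i,0)$) forces $V_I^i\cong V_I(\omega_i,0)$.

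Alternatively, and perhaps more cleanly, I would invoke the $RTT$ picture: by Proposition~\ref{lift} the Gauss-type generators $A_j(u),B_j(u),C_j(u)$ for $j\in I$ act on $V_I^i$ through the fundamental-representation homomorphism of $Y_{new}(\fg_I)$, and one checks that the induced map $Y_{new}(\fg_I)\to\End(V_I^i)$ factors through the quotient defining $V_I(\omega_i,0)$ and is surjective onto $\End_{\fg_I}$-irreducible blocks; a dimension count then finishes. Either way the substantive point is the irreducibility/dimension match, which is where I would spend the real effort.
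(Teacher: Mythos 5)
Your setup --- identifying $V_I^i$ as a highest weight $Y_{new}(\fg_I)$-module generated by $v_i$ and matching its Drinfeld polynomials with those of $V_I(\omega_i,0)$ --- is correct and agrees with the first sentence of the paper's proof. The gap is the irreducibility step, which you yourself flag as the substantive point but do not close. The assertion that ``a finite-dimensional highest weight module with the Drinfeld polynomials of a fundamental representation and one-dimensional highest weight space must be irreducible'' is not a usable principle: \emph{every} cyclic highest weight module has a one-dimensional top weight space (by PBW), so that hypothesis adds nothing, and finite-dimensional highest weight modules over Yangians are in general reducible. The fallback of ``comparing dimensions with the known $\dim V_I(\omega_i,0)$'' is also not available without further work: fundamental Yangian modules are in general strictly larger than $V_{\omega_i}$ as $\fg$-modules (witness the lower summands $V_\mu^{\oplus k_\mu}$ in the displayed decomposition of $V(\omega_i,0)$), and neither $\dim V_I(\omega_i,0)$ nor $\dim V_I^i$ is actually computed in your argument. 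The RTT alternative you sketch is likewise not a proof as stated.

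The fix is a one-line strengthening of an observation you already make. You note that $e_j^{(r)}$ for $j\in\Delta\setminus I$ strictly raises the eigenvalue of $x$ and therefore kills $v_i$; but since $V_I^i$ is by construction the \emph{top} eigenspace of $x$ in $V(\omega_i,0)$, the same reasoning gives $E_j(u)w=0$ for \emph{every} $w\in V_I^i$ and every $j\in\Delta\setminus I$. Consequently any vector of $V_I^i$ annihilated by all $E_j(u)$ with $j\in I$ (a singular vector for $Y_{new}(\fg_I)$) is annihilated by all $E_j(u)$ with $j\in\Delta$, i.e.\ is a singular vector of $V(\omega_i,0)$ for $Y_{new}(\fg)$. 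Irreducibility of $V(\omega_i,0)$ forces such a vector to be proportional to $v_i$, so $V_I^i$ has no singular vectors other than its highest weight vector and is therefore irreducible; together with your highest weight computation this identifies it with $V_I(\omega_i,0)$. This is exactly the paper's argument, and no dimension count is needed.
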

\begin{proof}
Note that the highest weight for $V_I^i$ is the same as that for $V_I(\omega_i,0)$. Hence it is suffices to prove that $V_I^i$ is irreducible i.e. there are no singular vectors in $V_I^i$ except the highest vector. Note that for any $w\in V_I^i$ and $j\in\Delta\backslash I$ we have $E_j(u)w=0$, so any singular vector in $V_I^i$ with respect to $Y_{new}([\fl,\fl])$ is in fact a singular vector of $V(\omega_i,0)$ with respect to $Y_{new}(\fg)$.
\end{proof}

Let us define $Y_V(\fl)$ as the subalgebra of $Y_V(\fg)$ generated by all Fourier components of all matrix elements $t_{\beta,v}(u)$ with $v\in V_I, \beta\in W^\perp = V_I^*$. 
\begin{prop}\label{ylevi}
The image of $Y_V(\fl)$ in $Y_{new}(\fg)$ is $Y_{new}(\fl)$.
\end{prop}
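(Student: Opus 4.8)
The plan is to show that the image of $Y_V(\fl)$ in $Y_{new}(\fg)$ under $\phi^{-1}\circ\psi$ equals $Y_{new}(\fl)$ by matching explicit generators on both sides, using the decomposition $V = V_I \oplus W$ and the description of the matrix elements $t_{\beta,w}(u)$ via the series $A_i(u), B_i(u), C_i(u)$ from Proposition~\ref{lift}. First I would observe that since $Y_V(\fl)$ is generated by the Fourier components of $t_{\beta,v}(u)$ with $v \in V_I$ and $\beta \in W^\perp = V_I^*$, and since $V_I = \bigoplus_i V_I^i$ with each $V_I^i$ (for $i \in I$) a fundamental representation $V_I(\omega_i,0)$ of $Y_{new}(\fg_I)$ by the previous Proposition, the restriction of $T(u)$ to the $V_I$-block is exactly the $T$-matrix of the RTT Yangian $Y_{V_I}(\fg_I)$ for the Levi (after quotienting out the central directions); applying the isomorphism of Theorem~\ref{iso} for $\fg_I$ in place of $\fg$ identifies the image with (a central extension of) $Y_{new}(\fg_I)$ inside $Y_{new}(\fg)$.

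More concretely, I would argue in two inclusions. For ``$\subseteq$'': by Proposition~\ref{lift} and its Corollary, for the highest and pre-highest vectors $v_i, w_i$ of $V_I^i$ (which, since $i \in I$, are genuine highest/pre-highest vectors for $\fg_I$ as well) we get $\phi^{-1}\circ\psi(t_{v_i^*,v_i}(u)) = A_i(u)$, $\phi^{-1}\circ\psi(t_{v_i^*,w_i}(u)) = B_i(u)$, $\phi^{-1}\circ\psi(t_{w_i^*,v_i}(u)) = C_i(u)$, all of which lie in $Y_{new}(\fl)$; the general matrix element $t_{\beta,v}(u)$ with $v\in V_I$, $\beta\in V_I^*$ is obtained from these by repeatedly applying $\operatorname{ad} b_j^{(1)}$ and $\operatorname{ad} c_j^{(1)}$ for $j \in I$ (the defining recursion for $T_{\beta,\gamma}(u)$ in \cite{kamn2}), and since $b_j^{(1)}, c_j^{(1)} \in Y_{new}(\fl)$ for $j \in I$, staying within the $V_I$-block, the image stays in $Y_{new}(\fl)$. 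Here I must also check that the $A_j(u)$ for $j \in \Delta\setminus I$ lie in the image: these appear (up to the $B,C$-type corrections) as $t_{11}^{(r)}$-type elements for the fundamental node $j$, restricted appropriately — more precisely $t_{v_j^*,v_j}(u)$ for $v_j$ the highest vector of $V(\omega_j,0)$, which has eigenvalue $\lambda$ under $x$, hence lies in $V_I^j$, so its matrix element is among the generators of $Y_V(\fl)$ and maps to $A_j(u)$ by Proposition~\ref{lift}.

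For ``$\supseteq$'': $Y_{new}(\fl)$ is generated by $e_i^{(r)}, f_i^{(r)}$ for $i \in I$ together with all $h_i^{(r)}$, or equivalently by $b_i^{(1)}, c_i^{(1)}, a_i^{(2)}$ for $i \in I$ and $a_j^{(r)}$ for $j \in \Delta\setminus I$ (using Remark~\ref{gennew} adapted to $\fl$, together with the observation in the proof of the previous Proposition that $H_j(u)$ can be replaced by $A_j(u)$). All of these have just been exhibited in the image: $b_i^{(1)}, c_i^{(1)}, a_i^{(2)}$ come from $B_i, C_i, A_i$ via $V_I^i$ for $i \in I$, and the $a_j^{(r)}$ for $j \in \Delta\setminus I$ from $A_j(u) = \phi^{-1}\circ\psi(t_{v_j^*,v_j}(u))$ as above. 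Hence both inclusions hold and the images coincide.

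\textbf{Main obstacle.} The delicate point I expect is the bookkeeping that the matrix elements $t_{\beta,v}(u)$ with $v \in V_I$, $\beta \in W^\perp$ generate precisely $Y_{new}(\fl)$ and nothing more — i.e. that restricting to the $V_I$-block does not accidentally produce elements outside $Y_{new}(\fl)$ (e.g. $e_\alpha^{(r)}$ for $\alpha \notin \Phi_I^+$) and, conversely, that the recursion of \cite{kamn2} generated by $b_j^{(1)}, c_j^{(1)}$ with $j \in I$ already reaches every matrix element of the $V_I$-block. The first is controlled by the fact that $V_I^i$ is a $Y_{new}(\fl)$-submodule on which $E_j(u)$ acts by zero for $j \in \Delta\setminus I$, so the raising/lowering operators that occur are only those of $\fl$; the second follows because $V_I^i$ is irreducible over $Y_{new}(\fg_I)$ (previous Proposition), so the $\fg_I$-weight vectors spanning it are reached from $v_i$ by the $\operatorname{ad} b_j, \operatorname{ad} c_j$ action exactly as in the $\fg$-case treated in Proposition~\ref{lift}. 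Making this precise may require invoking the filtered/graded identification $\gr Y_V(\fg) = \mathcal{O}(G_1[[t^{-1}]])$ and Proposition~\ref{image} to compare dimensions or Poincaré series of the two subalgebras, ensuring the surjection $Y_V(\fl) \twoheadrightarrow Y_{new}(\fl)$ has the expected size.
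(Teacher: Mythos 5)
Your ``$\supseteq$'' direction is essentially the paper's: the matrix elements at highest and pre-highest vectors of the $V_I$-blocks give $b_i^{(1)},c_i^{(1)},a_i^{(2)}$ for $i\in I$ and the series $A_j(u)$ for all $j$, and these generate $Y_{new}(\fl)$. The genuine gap is in ``$\subseteq$''. The subalgebra $Y_V(\fl)$ is generated by \emph{all} Fourier components of $t_{\beta,v}(u)$ with $v\in V_I$, $\beta\in V_I^*$, whereas the recursion $[b_j^{(1)},T_{\beta,\gamma}(u)]=T_{x_{\alpha_j}^-\beta,\gamma}(u)-T_{\beta,x_{\alpha_j}^+\gamma}(u)$ with $j\in I$ only reaches pairs $(\beta,\gamma)$ in the $U(\fg_I)$-orbit of $(v_i^*,v_i)$, i.e. matrix elements supported on $U(\fg_I)v_i\subset V_I^i$. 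But $V_I^i$ is the $Y_{new}(\fl)$-submodule generated by $v_i$, i.e. it is swept out by the \emph{higher} lowering operators $f_\alpha^{(r)}$, and in general $U(\fg_I)v_i\subsetneq V_I^i$: irreducibility of $V_I^i$ over $Y_{new}(\fg_I)$ does not imply that it is generated by $v_i$ over the Lie algebra $\fg_I$. This discrepancy really occurs, e.g. whenever $V(\omega_i,0)$ contains a $\fg$-component $V_\nu$ with $\nu<\omega_i$ such that $\omega_i-\nu$ is a $\bz_{\geq 0}$-combination of $\{\alpha_j\}_{j\in I}$ (middle fundamental weights in types $B$, $C$, $D$ with $I$ containing the tail of the diagram); for such $v$ your argument gives no reason why $\phi^{-1}\circ\psi(t_{\beta,v}(u))$ should lie in $Y_{new}(\fl)$.

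The paper closes exactly this hole by a structural step you skip: it restricts the universal $R$-matrix to $V_I\otimes V_I$ and invokes \cite[Theorem 3.10]{GRW} to see that on each irreducible component it agrees, up to a scalar series, with the $R$-matrix of $Y(\fg_I)$; hence $Y_V(\fl)$ is a quotient of $X_{V_I}(\fg_I)=Y_{V_I}(\fg_I)\otimes Z$ (Theorem \ref{tensor}). The factor $Y_{V_I}(\fg_I)$ is then controlled by its small generating set (Proposition \ref{rttgen}) --- precisely the elements you do compute --- while the ``unreachable'' matrix elements are absorbed into the central factor $Z$, whose image commutes with everything in sight and is therefore forced into $H\subset Y_{new}(\fl)$ by the maximality of the Cartan subalgebra (Proposition \ref{cartan}). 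Your suggested fallback of comparing Poincar\'e series would require an upper bound on $\dim\bigl(Y_V(\fl)\cap Y_V^{(r)}\bigr)$, which is essentially this structural statement, so the missing step is not a formality.
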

\begin{proof}

Firstly, we prove that 
$Y_V(\fl) \simeq  (Y_{V_I}(\fg_I) \otimes Z) / J$, where $Z$ is the center of $X_{V_I}(\fg_I)$ and $J$ is some ideal.

Each irreducible component $U$ of $V_I \otimes V_I$ is stable under $R= (\rho\otimes\rho)\hat R(-u)$. Moreover, for any $U$ the operator $R$ is a solution of the equation \cite[(3.13)]{wend}
for $U\otimes U$, therefore from \cite[Theorem 3.10]{GRW} it follows that $R$ up to multiplication to $f_U(u) \in \bc[[u^{-1}]]$ is $(\rho_I \otimes \rho_I)\tilde R(-u)$, where $\tilde R(u)$ is the universal $R$-matrix for $Y(\fg_I)$. Hence we have 
$$Y_V(\fl) = X_{V_I}(\fg_I) / J =  (Y_{V_I}(\fg_I) \otimes Z) / J.$$
Next, we claim that the image of $Y_V(\fl)$ contains $Y_{new}(\fl)$.
Indeed, from Proposition \ref{t11} we know that the image contains $e_i^{(1)}$ and $f_i^{(1)}$ for $i \in I$. Note that $e_i^{(1)}, f_i^{(1)}$ together with $H$ generate $Y_{new}(\fl)$. Therefore it is enough to prove that
$\phi^{-1}\circ\psi(Y_V(\fl))$ contains $H$.

Indeed, all $t_{11}^{(r)}$ for different $\omega_i$ commute.
We know that the images of $t_{11}^{(1)}$ and $t_{11}^{(2)}$ under $\phi^{-1}\circ\psi$ belong to $H$.
Then from Proposition \ref{cartan} it follows that in fact $\phi^{-1}\circ\psi(t_{11}^{(r)}) \in H$.
Moreover, the images of $t_{11}^{(r)}$ in $\gr Y_V(\fg)$ are algebraically independent by Proposition \ref{image}. Hence they generate $H$, therefore $H$ lies in the image of $Y_V(\fl)$.
On the other hand, from Proposition \ref{rttgen} it follows that the images of the generators of $Y_{V_I}(\fg_I)$ belong to $Y_{new}(\fl)$, hence the image of $Y_{V_I}(\fg_I)$ lies in $Y_{new}(\fl)$. Moreover, the image of $Z$ belongs to $H$, because $H$ is a maximal commutative subalgebra. Hence $\phi^{-1} \circ \psi (Y_V(\fl)) = Y_{new}(\fl)$.
\end{proof}

\section{Bethe subalgebras}

\subsection{Definition of Bethe subalgebra}
Let $\rho_i:Y(\fg)\to\End V(\omega_i,0)$ be the $i$-th fundamental representation of $Y(\fg)$. Let $$\pi_i: V \to V(\omega_i,0)$$ be the projection. 

Let $T^i(u)=\pi_iT(u)\pi_i$ be the submatrix of $T(u)$-matrix, corresponding to $i$-th fundamental representation.
\begin{defn}
\label{bethe}
Let $C \in \tilde G$. Bethe subalgebra $B(C)\subset Y_V(\fg)$ is the subalgebra generated by all Fourier coefficients of the following series  with the coefficients in $Y_V(\fg)$
$$\tau_i(u,C) = \tr_{V(\omega_i,0)} \rho_i(C)T^i(u),\ \  1 \leqslant i \leqslant n.$$
\end{defn}

\begin{prop}
The coefficients of $\tau_i(u,C)$, $i = 1,\ldots,n$ pairwise commute.
\end{prop}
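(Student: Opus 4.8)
The plan is to derive commutativity of the transfer matrices $\tau_i(u,C)$ directly from the $RTT$ relation, following the standard argument from the quantum inverse scattering method. The key point is that the defining relation $R(u-v)T_1(u)T_2(v)=T_2(v)T_1(u)R(u-v)$ can be rewritten, after multiplying by $R(u-v)^{-1}$, as
\begin{equation*}
T_1(u)T_2(v)=R(u-v)^{-1}T_2(v)T_1(u)R(u-v).
\end{equation*}
Here I regard $T(u)\in\End V\otimes Y_V(\fg)[[u^{-1}]]$ and $T_1(u),T_2(u)$ as its images in the two tensor factors of $\End V\otimes\End V\otimes Y_V(\fg)[[u^{-1}]]$. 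First I would fix $i,j$, insert the operators $\rho_i(C)$ in the first factor and $\rho_j(C)$ in the second factor; since $C\in\tilde G$ acts on all of $V$ and hence block-diagonally with respect to the decomposition $V=\bigoplus_k V(\omega_k,0)$, the operator $\rho(C)\in\End V$ commutes with each projector $\pi_k$, so that $\pi_i\rho(C)=\rho(C)\pi_i=\rho_i(C)\pi_i$ as operators, and $T^i(u)=\pi_iT(u)\pi_i$ is the $i$-th diagonal block.

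The main step is the computation: multiply the rewritten $RTT$ relation on the left by $\rho_i(C)\otimes\rho_j(C)$ (viewed inside $\End V^{\otimes 2}$, restricted to the relevant blocks) and take the trace $\Tr_{V(\omega_i,0)\otimes V(\omega_j,0)}$ over the first two tensor factors. The left-hand side yields $\tau_i(u,C)\tau_j(v,C)$ because the trace of a tensor product factors and because $\rho(C)$ commutes with the block projectors. On the right-hand side I use cyclicity of the trace to move $R(u-v)$ from the far right around to the far left, obtaining
\begin{equation*}
\Tr_{12}\bigl((\rho_i(C)\otimes\rho_j(C))R(u-v)^{-1}T_2(v)T_1(u)R(u-v)\bigr)=\Tr_{12}\bigl(R(u-v)(\rho_i(C)\otimes\rho_j(C))R(u-v)^{-1}T_2(v)T_1(u)\bigr).
\end{equation*}
The crucial input is that $\rho(C)\otimes\rho(C)=(\rho\otimes\rho)\Delta(C)$ in the relevant sense, i.e. $C$ acts the same way in both factors by the group homomorphism $\tilde G\to\End V$, and the universal $R$-matrix is $G$-invariant: $\hat R(u)$ commutes with $\Delta(x)$ for $x\in\fg$ (this is part of the properties defining $\hat R(u)$, see \cite[Theorem 3.4]{wend}), hence with $(\rho\otimes\rho)\Delta(C)$ for $C\in\tilde G$. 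Therefore $R(u-v)(\rho(C)\otimes\rho(C))R(u-v)^{-1}=\rho(C)\otimes\rho(C)$, and the right-hand side becomes $\Tr_{12}\bigl((\rho_i(C)\otimes\rho_j(C))T_2(v)T_1(u)\bigr)=\tau_j(v,C)\tau_i(u,C)$. Comparing the two sides gives $\tau_i(u,C)\tau_j(v,C)=\tau_j(v,C)\tau_i(u,C)$ for all $i,j$ and all $u,v$, which upon extracting Fourier coefficients proves that all the generators of $B(C)$ pairwise commute.

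I expect the main technical obstacle to be the careful bookkeeping of the $G$-invariance of the $R$-matrix and the interaction with the block structure: one must check that $\rho(C)$ for $C\in\tilde G$ (not merely for $C$ in a formal neighborhood of the identity, nor only for $C=\exp(x)$ with $x\in\fg$) genuinely commutes with $R(u-v)$, which follows because $\tilde G$ is connected and generated by such exponentials, so invariance under $\fg$ integrates to invariance under $\tilde G$. A secondary point is justifying the manipulations with traces and inverses inside the completed tensor product $\End V^{\otimes 2}\otimes Y_V(\fg)[[u^{-1},v^{-1}]]$; since $R(u-v)=\Id-\Omega_\rho(u-v)^{-1}+\cdots$ is invertible as a formal series in $(u-v)^{-1}$ and all the series involved lie in the appropriate completion, these operations are legitimate, and the argument goes through verbatim.
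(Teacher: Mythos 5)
Your argument is correct and is essentially the paper's own proof: both restrict the $RTT$ relation to the blocks $V(\omega_i,0)\otimes V(\omega_j,0)$, use that $R_{ij}(u-v)$ commutes with $\rho_i(C)\otimes\rho_j(C)$ because the $R$-matrix is a morphism of $\tilde G$-modules, and conclude by cyclicity of the trace. The extra remarks on invertibility of $R(u-v)$ as a formal series and on integrating $\fg$-invariance to $\tilde G$-invariance are sound but not a departure from the paper's route.
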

\begin{proof}
Let $R_{ij}(u) = (\rho_i \otimes \rho_j) \hat R(u)$ where $\hat R(u)$ is the universal $R$-matrix as in Definition \ref{extyangian}, and 
$\rho_i$ is the representation homomorphism corresponding to $V(\omega_i,0)$. Then
$$R_{ij}(u-v)T^i_1(u) T^j_2(v) = T^j_2(v) T^i_1(u) R_{ij}(u-v).$$


Moreover, we know that
$[\rho_i(C)\otimes \rho_j(C), R_{ij}(u - v)] = 0$, because $R_{ij}(u-v)$ is a morphism of $\tilde G$-modules.

Hence we have
$$\rho_i(C)_1T^i_1(u)\rho_j(C)_2 T^j_2(v) = R_{ij}(u-v)^{-1} \rho_i(C)_1 T^j_2(u) \rho_j(C)_2 T^i_1(v) R_{ij}(u-v).$$

Taking the trace over $V(\omega_i,0) \otimes V(\omega_j,0)$ we get
$$\tau_i(u,C)\tau_j(v,C) =  \tau_j(v,C) \tau_i(u,C).$$

\end{proof}

\begin{rem}
Note that the Bethe subalgebra $B(C)$ depends only on the class of $C$ in $\tilde G / Z(\tilde G)$, so in fact Bethe subalgebras are parameterized by $G$.
\end{rem}
\begin{rem}
We define a Bethe subalgebra for the Yangian of a reductive Lie algebra $\fg$ as the tensor product of Bethe subalgebra in $Y_V([\fg,\fg])$ and the center.  
\end{rem}

\begin{rem}
In the work \cite[Section 5]{mo} Maulik and Okounkov define the Yangian $\mathrm{Y}_Q$ of a quiver $Q$ in the following way. We have a collection of vector spaces $F_i(u)$ which are the equivariant cohomology spaces of the corresponding Nakajima quiver variety $\mathcal{M}(w_i)$ where $w_i$ is $1$ at the $i$-th vertex of $Q$ and $0$ at the others. The rational $R$-matrix $R_{ij}(u,v)\in \End(F_i(u)\otimes F_i(v))$ is defined geometrically using the formalism of stable envelopes. The algebra $\mathrm{Y}_Q$ is generated by spaces $\End(F_i(u)\otimes F_i(v))$ modulo quadratic RTT relations determined by $R_{ij}(u,v)\in \End(F_i(u)\otimes F_i(v))$ in any tensor product $F_i(u)\otimes F_j(v)$. For $Q$ of finite type corresponding to a simple Lie algebra $\fg$, the spaces $F_i(u)$ are known to be $V(\omega_i,u)$, the fundamental representations of $Y(\fg)$, so the Yangian $\mathrm{Y}_Q$ is in fact $X_V(\mathfrak{g})$ with $V$ being the sum of fundamental representations. In \cite[Section 6.5]{mo} Maulik and Okounkov define commutative subalgebras in $\mathrm{Y}_Q$, called \emph{Baxter subalgebras}, depending on an element of the maximal torus of the Lie algebra corresponding to $\mathrm{Y}_Q$. This definition coincides with our definition of Bethe subalgebras $B(C)$ with $C\in T$ (more precisely, our $B(C)$ is the image of the corresponding Baxter subalgebra under the quotient homomorphism from $X_V(\fg)$ to $Y_V(\fg)$).
\end{rem}

\subsection{Bethe subalgebras in $\mathcal{O}(G_1[[t^{-1}]])$}

Let $\{V_{\omega_i}\}_{i=1}^{n}$ be the set of all fundamental representations of $\fg$. We also consider  $\{V_{\omega_i}\}_{i=1}^{n}$ as a representations of the corresponding simply-connected group $\tilde G$.

\begin{defn}
Let $C \in \tilde G$. The Bethe subalgebra $\bar B(C)$ of $\mathcal{O}(G_1[[t^{-1}]])$ is the subalgebra generated by of the coefficients of the following series:
$$\sigma_i(u,C) = \tr_{V_{\omega_i}} \rho_i(C)\rho_i(g) = \sum_{v \in \Lambda_i} \Delta_{v,v^{*}}(Cg) = \sum_{r=0}^{\infty} \sum_{v \in \Lambda_i} \Delta_{v,v^{*}}^{(s)}(Cg) u^{-r},$$
where $\Lambda_i$ is some basis of $V_{\omega_i}$, $g \in G_1[[t^{-1}]]$.
\end{defn}
\begin{rem}
Note, that one can define the same subalgebra as the one generated by such traces for \emph{all} finite-dimensional representations of $G$.
\end{rem}
\begin{prop}
For regular $C\in \tilde G$, the Fourier coefficients of $\sigma_i(u,C)$ are algebraically independent. 
\end{prop}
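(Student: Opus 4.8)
The plan is to reduce the statement to a finite-dimensional Jacobian computation at the identity of $G_1[[t^{-1}]]$, where it becomes a statement about the fundamental characters of $\tilde G$ controlled by Steinberg's theory of regular elements. First I would introduce logarithmic coordinates on $G_1[[t^{-1}]]\cong\tilde G_1[[t^{-1}]]$: writing $g(t)=\exp X(t)$ with $X(t)=\sum_{k\geqslant 1}X_kt^{-k}$, $X_k\in\fg$, and $X_k=\sum_a X_k^a x_a$ for a fixed basis $\{x_a\}$ of $\fg$, one gets $\mathcal{O}(G_1[[t^{-1}]])=\bc[X_k^a]$. Expanding $\tr_{V_{\omega_i}}\rho_i(C)\rho_i(\exp X(t))$ in powers of $t^{-1}$, the coefficient $\sigma_i^{(r)}$ at $t^{-r}$ is a polynomial in the $X_k^a$ involving only $k\leqslant r$, since every monomial $\rho_i(X_{k_1})\cdots\rho_i(X_{k_m})$ contributing to it satisfies $k_1+\dots+k_m=r$. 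Because a countable family is algebraically independent iff each of its finite subfamilies is, it suffices to show, for every $N$, that the $nN$ functions $\{\sigma_i^{(r)}\mid 1\leqslant i\leqslant n,\ 1\leqslant r\leqslant N\}$ — now regular functions on the finite-dimensional space $\fg^{\oplus N}$ with coordinates $X_1,\dots,X_N$ — are algebraically independent; and for that, by the Jacobian criterion, it is enough to exhibit a single point at which their differentials in the $X_k^a$ ($k\leqslant N$) are linearly independent.

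Next I would evaluate the Jacobian at the point $g=1$, i.e. $X_1=\dots=X_N=0$. Since $d\exp$ is the identity at the origin, $\partial(\rho_i(\exp X(t)))/\partial X_k^a\big|_{X=0}=\rho_i(x_a)t^{-k}$, and hence $\partial\sigma_i^{(r)}/\partial X_k^a\big|_{g=1}=\delta_{kr}\,\tr_{V_{\omega_i}}(\rho_i(C)\rho_i(x_a))$. Thus this Jacobian, with rows indexed by $(i,r)$ and columns by $(k,a)$, is block-diagonal for the pairing $r=k$, each diagonal block being the single $n\times\dim\fg$ matrix $M(C)$ with entries $M(C)_{i,a}=\tr_{V_{\omega_i}}(\rho_i(C)\rho_i(x_a))$. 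Consequently its rank equals $N\cdot\rk M(C)$, and the whole problem reduces to proving $\rk M(C)=n$ for regular $C$.

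Finally, for $\xi=\sum_a\xi^ax_a\in\fg$ one has $\sum_a\xi^aM(C)_{i,a}=\tr_{V_{\omega_i}}(\rho_i(C)\rho_i(\xi))=\frac{d}{ds}\big|_0\chi_{\omega_i}(C\exp(s\xi))$, where $\chi_{\omega_i}=\tr_{V_{\omega_i}}\rho_i$ is the $i$-th fundamental character of $\tilde G$; so the $i$-th row of $M(C)$ is the covector $d\chi_{\omega_i}|_C$ transported to $\fg^*$ by left translation, and $\rk M(C)=\dim\operatorname{span}\{d\chi_{\omega_1}|_C,\dots,d\chi_{\omega_n}|_C\}$. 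As $\tilde G$ is simply connected, $\bc[\tilde G]^{\tilde G}\cong\bc[T]^W$ is the polynomial ring $\bc[\chi_{\omega_1},\dots,\chi_{\omega_n}]$, so $(\chi_{\omega_1},\dots,\chi_{\omega_n})$ is precisely the adjoint quotient morphism $\pi\colon\tilde G\to\mathbb{A}^n$; by Steinberg's theorem on regular elements of a semisimple group, $\pi$ is smooth exactly along the regular locus, so $d\pi_C$ is surjective for regular $C$, i.e. $d\chi_{\omega_1}|_C,\dots,d\chi_{\omega_n}|_C$ are linearly independent. This gives $\rk M(C)=n$, hence the Jacobian at $g=1$ has full rank $nN$, and the $\sigma_i^{(r)}$ with $r\leqslant N$ are algebraically independent; letting $N\to\infty$ finishes the proof.

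The only substantive ingredient is this last step — identifying $M(C)$ with the differential of the adjoint quotient and invoking Steinberg's characterization of regular elements; everything else (the reduction to finitely many $\sigma_i^{(r)}$, the block-diagonal Jacobian at the identity) is routine. The one point needing a word of care is that $G_1[[t^{-1}]]$ is only pro-algebraic, but since each $\sigma_i^{(r)}$ factors through the finite truncation $G_1[[t^{-1}]]/G_{N+1}[[t^{-1}]]\cong\fg^{\oplus N}$ this causes no difficulty.
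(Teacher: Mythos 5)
Your proof is correct and follows essentially the same route as the paper: both reduce the claim to linear independence of the differentials of the $\sigma_i^{(r)}$ at the identity of $G_1[[t^{-1}]]$, observe the $\delta_{rs}$ block structure identifying each block with the differentials of the fundamental characters at (a translate of) $C$, and conclude by Steinberg's theorem on regular elements. Your added care about reducing to finite truncations $\fg^{\oplus N}$ and the Jacobian criterion is a more explicit version of what the paper leaves implicit, not a different argument.
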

\begin{proof}
Denote by $\sigma_i(C)^{(r)}$ the coefficient of $u^{-r}$ in $\sigma_i(C)$. It is enough to prove that differentials of all $\sigma_i(C)^{(r)}$  at a certain point of $G_1[[t^{-1}]]$ are linearly independent. Note that differential of $\sigma_i(C)^{(r)}$ at the point $e \in G_1[[t^{-1}]]$ is naturally a linear functional on the tangent space $T_e G_1[[t^{-1}]] = t^{-1}g[[t^{-1}]]$.

Let $\chi_{\omega_i}\in\mathcal{O}(\tilde G)$ be the character of the $\tilde G$-module $V_{\omega_i}$. We identify the tangent space $T_{C^{-1}}\tilde G$ with $\fg=T_e\tilde G$ by the left $\tilde G$-action. Then we have the following 

\begin{lem}
\label{gr2}
$d_e \, \sigma_i(C)^{(r)} (xt^{-s}) = \delta_{r,s} d_{C^{-1}} \chi_{\omega_i} (x)$ for any $x\in\fg$. 
\end{lem}
\begin{proof}
We have $$d_e \sigma_i(C)^{(r)}=\sum\limits_{v\in\Lambda_i} d_e \Delta_{v,v^{*}}^{(r)}(C) (xt^{-s}) = \delta_{r,s} \tr_{V_{\omega_i}} \rho_i(C) \rho_i(x)=\delta_{r,s} d_{C^{-1}} \chi_{\omega_i} (x)$$ for any $x\in\fg$. 
\end{proof}

On the other hand the differentials of the characters $\chi_{\omega_i}$ at any regular point $g\in \tilde G$ are algebraically independent, see  e.g. \cite[Theorem 3, p.119]{steinberg} so we are done.
\end{proof}

\begin{prop}
\label{gr}
We have $\gr B(C) = \bar B(C)$ for regular $C\in\tilde G$. In particular, $\bar B(C)$ is Poisson commutative.
\end{prop}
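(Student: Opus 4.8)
The plan is to show the two inclusions $\gr B(C)\subseteq\bar B(C)$ and $\gr B(C)\supseteq\bar B(C)$ inside $\mathcal{O}(G_1[[t^{-1}]])=\gr Y_V(\fg)$, and then invoke the preceding independence proposition to upgrade a containment into equality. The main computational input is Proposition \ref{image}, which tells us that $\gr\,\phi^{-1}\circ\psi$ sends $t_{\beta,w}^{(r)}$ to $\Delta_{\beta,w}^{(r)}$. Since by definition $\tau_i(u,C)=\tr_{V(\omega_i,0)}\rho_i(C)T^i(u)$ is a fixed $\bc$-linear combination (with coefficients the matrix entries of $\rho_i(C)$ in a chosen basis) of the series $t_{\beta,w}(u)$ with $\beta,w$ running over that basis and its dual, the symbol of the degree-$r$ Fourier coefficient of $\tau_i(u,C)$ is the \emph{same} linear combination of the $\Delta_{\beta,w}^{(r)}$, i.e. it is exactly the degree-$r$ Fourier coefficient of $\sigma_i(u,C)$ — provided one checks that the restriction of $V(\omega_i,0)$ to $\fg$ contributes only through its component $V_{\omega_i}$ at top degree. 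Here I would use the weight-filtration argument already in the paper: the lower $\fg$-constituents $V_\mu$ with $\mu<\omega_i$ contribute matrix entries $t_{\beta,w}^{(r)}$ whose symbols $\Delta_{\beta,w}^{(r)}$ are matrix coefficients of lower representations of $G$, hence already lie in $\bar B(C)$ (which by the Remark is generated by traces in \emph{all} finite-dimensional $G$-representations, not just fundamental ones). This gives $\gr B(C)\subseteq\bar B(C)$.

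For the reverse inclusion, note that each generator $\sigma_i(C)^{(r)}$ of $\bar B(C)$ is, by the computation above, the symbol (leading term) of the corresponding Fourier coefficient of $\tau_i(u,C)\in B(C)$; hence $\bar B(C)\subseteq\gr B(C)$ as well. Thus $\gr B(C)=\bar B(C)$ as graded subalgebras of $\mathcal{O}(G_1[[t^{-1}]])$, without needing regularity of $C$. Regularity enters only for the ``In particular'' clause: by the previous proposition the Fourier coefficients of $\sigma_i(u,C)$ are algebraically independent for regular $C$, so $\bar B(C)$ is a free polynomial algebra; since $\gr B(C)=\bar B(C)$ is commutative and $B(C)$ is a filtered deformation of it, $B(C)$ is Poisson commutative and $\bar B(C)$ inherits the Poisson bracket as a Poisson subalgebra of $\mathcal{O}(G_1[[t^{-1}]])$ — commutativity of $B(C)$ (already proved in the previous Proposition) forces the induced Poisson bracket on $\gr B(C)$ to vanish, i.e. $\bar B(C)$ is Poisson commutative.

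The step I expect to be the main obstacle is the careful bookkeeping that the degree-$r$ symbol of $\tr_{V(\omega_i,0)}\rho_i(C)T^i(u)$ really equals $\sigma_i(C)^{(r)}$ and not something with extra lower-degree corrections: one must be sure that passing from $T(u)$ to its submatrix $T^i(u)=\pi_i T(u)\pi_i$, then taking the trace against $\rho_i(C)$, commutes with taking symbols, and that the appearance of non-$\fg$-irreducible pieces in $V(\omega_i,0)|_\fg$ (the $V_\mu^{\oplus k_\mu}$ with $\mu<\omega_i$) does not spoil the identification — this is exactly where the filtration estimate from Proposition \ref{filtr}/\ref{image} and the ``all representations'' form of the Remark are needed. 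Once that is in hand, the inclusions are formal and the regular-$C$ statement follows immediately from the independence proposition together with the already-established commutativity of $B(C)$.
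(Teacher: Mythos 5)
Your outline (compute the symbols of the generators of $B(C)$ via Proposition~\ref{image}, account for the lower $\fg$-constituents of $V(\omega_i,0)$, and bring in the algebraic independence of the coefficients of $\sigma_i(u,C)$) matches the paper's, but you have misplaced the one point where regularity of $C$ is indispensable, and this leaves a genuine gap. The inclusion $\gr B(C)\subseteq\bar B(C)$ does \emph{not} follow from the fact that the symbols of the generators of $B(C)$ lie in $\bar B(C)$: for a filtered subalgebra $B$ generated by elements $b_1,b_2,\dots$ one only gets $\gr B\supseteq\bc[\gr b_1,\gr b_2,\dots]$ in general, because a polynomial in the $b_i$ whose leading symbols cancel drops in filtration degree, and the symbol of the resulting lower-degree element need not be expressible through the $\gr b_i$ at all. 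Ruling out such cancellations is exactly what the algebraic independence of the coefficients of the $\sigma_i(u,C)$ (equivalently of the $\gr\tau_i(u,C)$, the two families being related by an invertible triangular substitution) is for, and that independence is precisely where regularity of $C$ enters, via the nondegeneracy of the differentials of the fundamental characters at regular points. So your assertion that the equality $\gr B(C)=\bar B(C)$ holds ``without needing regularity of $C$,'' with regularity relegated to the ``in particular'' clause, is backwards: the paper invokes the independence statement to upgrade the containment of generators into the equality of subalgebras.

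Two smaller points. For the reverse containment you assert that $\sigma_i(C)^{(r)}$ \emph{is} the symbol of the $r$-th coefficient of $\tau_i(u,C)$; it is not --- the symbol equals $\sigma_i(C)^{(r)}$ plus the traces of $Cg$ over the lower constituents $V_\mu$, $\mu<\omega_i$, and these extra terms have the \emph{same} filtration degree $r$ (lower weight is not lower filtration degree, so they do not vanish upon taking symbols). To extract $\sigma_i(C)^{(r)}$ from the symbols of the $\tau_j$ one needs the inductive triangularity argument the paper states: every $V_\mu$ occurs as the top constituent of a tensor product of fundamental representations, so each $\Tr_{V_\mu}(Cg)$ is a polynomial in the $\sigma_j(u,C)$, whence the two generating families span the same subalgebra. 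Finally, individual matrix entries $\Delta_{\beta,w}^{(r)}$ of the lower constituents do not lie in $\bar B(C)$; only their traces against $\rho_i(C)$ do (which is what actually occurs, since $\rho_i(C)$ preserves the $\fg$-module decomposition of $V(\omega_i,0)$). With these corrections your argument becomes the paper's proof.
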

\begin{proof}
From Proposition \ref{image} it follows that $$\gr \tau_i(u,C) = \sigma_i(u,C) + \text{traces for representations with highest weight\ } \mu < \omega_i.$$
Any irreducible representation can be realized as the highest subrepresentation in some tensor product of the fundamental representations. Therefore the subalgebra generated by all $\gr \tau_i(u,C)$ coincides with the subalgebra generated by $\sigma_i(u,C)$.
Moreover, by Proposition~\ref{gr2} the coefficients of $\sigma_i(u,C)$ are algebraically independent, hence $\gr B(C) = \tilde B(C)$.
\end{proof}

\begin{thm}
\label{thm1}
For any regular $C \in \tilde G$, the subalgebra $B(C) \subset Y(\fg)$ is a free polynomial algebra. Moreover, the Poincar\'e series of $B(C)$ coincides with the Poincar\'e series of $H$.
\end{thm}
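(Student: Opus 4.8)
The plan is to deduce Theorem~\ref{thm1} from the combination of Proposition~\ref{gr} and the structure of $\mathcal{O}(G_1[[t^{-1}]])$ as a Poisson algebra. Concretely, by Proposition~\ref{gr} we have $\gr B(C)=\bar B(C)$ for regular $C$, and by the proof of that Proposition the coefficients $\sigma_i(C)^{(r)}$, $1\le i\le n$, $r\ge 1$, are algebraically independent generators of $\bar B(C)$. Since $\gr B(C)$ is a free polynomial algebra on these generators, and since a filtered algebra whose associated graded is a polynomial algebra on homogeneous generators is itself a free polynomial algebra on any lifts of those generators, we conclude that $B(C)$ is a free polynomial algebra. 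The only genuinely new point to nail down carefully is the degree count: I would show that, for each $i$, the generator $\sigma_i(C)^{(r)}$ has degree $r$ in the filtration on $Y_V(\fg)$, so that $\bar B(C)$ (hence $\gr B(C)$, hence $B(C)$) has exactly one free generator in each degree $r=1,2,3,\ldots$ for each of the $n=\rk\fg$ values of $i$.

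For the degree statement, recall from Proposition~\ref{image} that $\gr\,\phi^{-1}\circ\psi(t^{(r)}_{\beta,w})=\Delta^{(r)}_{\beta,w}$, so the symbol of $\tau_i(u,C)$ is $\sigma_i(u,C)$ up to traces attached to representations $V_\mu$ with $\mu<\omega_i$ (these are lower-order corrections that do not affect the leading term). Thus the coefficient of $u^{-r}$ in $\tau_i(u,C)$ has degree $r$, and its symbol is $\sigma_i(C)^{(r)}$ modulo the span of symbols of lower $u$-powers times Weyl-group-lower matrix entries; since all $\sigma_j(C)^{(s)}$ are algebraically independent (Proposition corresponding to Lemma~\ref{gr2}), these symbols are a regular sequence of homogeneous generators, one in each degree $r\ge 1$ for each $i=1,\dots,n$.

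It then remains to compare Poincar\'e series with the Cartan subalgebra $H$. By definition $H$ is generated by $h^{(r)}_i$, $1\le i\le n$, $r\ge 1$, which by Theorem~2.4 (the PBW theorem) form part of a PBW basis; hence $H$ is itself a free polynomial algebra on the $h^{(r)}_i$ with $\deg h^{(r)}_i=r$, giving Poincar\'e series $\prod_{r\ge 1}(1-q^r)^{-n}$. The computation above shows $B(C)$ is free on generators of exactly the same degrees, so the two Poincar\'e series coincide. I expect the main obstacle to be purely bookkeeping: verifying cleanly that the lower-order ``$\mu<\omega_i$'' corrections in $\gr\tau_i(u,C)$ and the lower-$u$-power contributions do not disturb either the algebraic independence or the degree of the leading generators — this is exactly where Proposition~\ref{image} together with the algebraic independence from Lemma~\ref{gr2} do the work, so no new ideas are needed beyond careful filtration arithmetic.
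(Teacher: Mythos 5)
Your proposal is correct and follows essentially the same route as the paper: the paper's proof of Theorem~\ref{thm1} is precisely the one-line deduction from Proposition~\ref{gr} (that $\gr B(C)=\bar B(C)$ is freely generated by the algebraically independent symbols $\sigma_i(C)^{(r)}$, one in each degree $r\geq 1$ for each $i$), combined with the standard fact that a filtered algebra whose associated graded is a free polynomial algebra on homogeneous generators is itself free on any lifts. Your additional bookkeeping about the $\mu<\omega_i$ corrections and the degree count is exactly the content already carried by Propositions~\ref{image} and~\ref{gr}, so nothing new is needed.
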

\begin{proof}
This follows from Proposition~\ref{gr}. 
\end{proof}

\subsection{Limits of Bethe subalgebras}
Following \cite{ir} it is possible to construct new commutative subalgebras as limits of Bethe subalgebras.
We describe here what the "limit" means.
Let $C$ be an element of $G^{reg}$. Consider $B^{(r)}(C):= Y_V^{(r)} \cap B(C)$. We have proved that the images of the coefficients of  $\tau_1(u,C), \ldots,$ $\tau_n(u,C)$ freely generate the subalgebra $\bar B(C) = \gr B(C)\subset \gr Y_V(\fg)$. Hence the dimension $d(r)$ of $B^{(r)}(C)$ does not depend on $C$.
Therefore for any $r \geqslant 1$ we have a map $\theta_r$ from $G^{reg}$ to $\prod_{i=1}^r {\rm Gr}(d(i),\dim Y_V^{(i)})$ such that $C \mapsto (B^{(1)}(C), \ldots, B^{(r)}(C))$. Here ${\rm Gr}(d(i),\dim Y_V^{(i)})$ is the Grassmanian of subspaces of dimension $d(i)$ of a vector space of dimension  $\dim Y_V^{(i)}$.
Denote the closure of $\theta_r(G^{reg})$ (with respect to Zariski topology) by $Z_r$. There are well-defined projections  $\zeta_r: Z_r \to Z_{r-1}$ for all $r \geqslant 1$. The inverse limit $Z = \varprojlim Z_r$ is well-defined as a pro-algebraic scheme and is naturally a parameter space for some family of commutative subalgebras which extends the family of Bethe subalgebras.

Indeed, any point $z\in Z$ is a sequence $\{z_r\}_{r \in \mathbb{N}}$ where $z_r \in Z_r$ such that $\zeta_r(z_r) = z_{r-1}$. Every $z_r$ is a point in $\prod_{i=1}^r {\rm Gr}(d(i),\dim Y_V^{(i)})$ i.e. a collection of subspaces $B_{r}^{(i)}(z)\subset Y_V^{(i)}$ such that $B_{r}^{(i)}(z)\subset B_{r}^{(i+1)}(z)$ for all $i<r$. Since $\zeta_r(z_r) = z_{r-1}$ we have $B_{r}^{(i)}(z)= B_{r-1}^{(i-1)}(z)$ for all $i<r$. Let us define the subalgebra corresponding to $z\in Z$ as $B(z):= \bigcup_{r=1}^{\infty} B_{r}^{(r)}(z)$. 

\begin{prop}\label{pr-limit} $B(z)$ is a commutative subalgebra in $Y_V(\fg)$ with the same Poincar\'e series as $B(C)$ for $C\in G^{reg}$. We call it a limit subalgebra.
\end{prop}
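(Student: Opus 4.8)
The plan is to deduce everything from one principle: any property which is Zariski-closed on the product of Grassmannians $\prod_{i\le N}\mathrm{Gr}(d(i),\dim Y_V^{(i)})$ and holds on the dense subset $\theta_N(G^{reg})$ must hold on its closure $Z_N$, hence for every $z\in Z$. First I would record the basic features of $B^{(r)}(z):=B_r^{(r)}(z)$: it is a well-defined $d(r)$-dimensional subspace of $Y_V^{(r)}$ (well-defined because it does not depend on the level $\ge r$ at which it is read, by the compatibility $\zeta_r(z_r)=z_{r-1}$), and the inclusion $B^{(r)}(z)\subseteq B^{(r+1)}(z)$ holds because it is a closed condition on $\prod_{i\le r+1}\mathrm{Gr}(d(i),\dim Y_V^{(i)})$ satisfied on $\theta_{r+1}(G^{reg})$. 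Hence $B(z)=\bigcup_r B^{(r)}(z)$ is an exhaustive filtered subspace of $Y_V(\fg)$ with $\dim B^{(r)}(z)-\dim B^{(r-1)}(z)=d(r)-d(r-1)$.

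Next I would prove commutativity and closedness under multiplication the same way. For fixed $N$ the condition ``$[W_i,W_j]=0$ for all $i,j\le N$'' on a point $(W_1,\dots,W_N)$ is the vanishing of the sections $W_i\otimes W_j\to Y_V^{(i+j-1)}$, $x\otimes y\mapsto [x,y]$, of a vector bundle over the Grassmannian product, hence closed; it holds on $\theta_N(G^{reg})$ because the $B(C)$ are commutative. Therefore $B^{(i)}(z)$ commutes with $B^{(j)}(z)$ for all $i,j$, and since any two elements of $B(z)$ lie in a common $B^{(r)}(z)$, the algebra $B(z)$ is commutative. Similarly, for fixed $a,b$ the condition ``$W_a\cdot W_b\subseteq W_{a+b}$'' is the vanishing of the composition $W_a\otimes W_b\to Y_V^{(a+b)}\to Y_V^{(a+b)}/W_{a+b}$, which is closed and holds on $\theta_{a+b}(G^{reg})$ because $B^{(a)}(C)B^{(b)}(C)\subseteq B(C)\cap Y_V^{(a+b)}=B^{(a+b)}(C)$. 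Hence $B^{(a)}(z)B^{(b)}(z)\subseteq B^{(a+b)}(z)$, so $B(z)$ is a subalgebra of $Y_V(\fg)$ and $\{B^{(r)}(z)\}_r$ is a multiplicative, exhaustive filtration on it. Since $B(z)$ is commutative, its associated graded with respect to this filtration is commutative, and its $r$-th component has dimension $d(r)-d(r-1)$ by construction; thus $B(z)$ has the same Poincar\'e series as $B(C)$.

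Finally I would address the finer point that the filtration $\{B^{(r)}(z)\}$ coincides with the one induced from $Y_V(\fg)$, i.e. $B(z)\cap Y_V^{(r)}=B^{(r)}(z)$. The inclusion $\supseteq$ is immediate; for $\subseteq$, since $B^{(j)}(z)\cap Y_V^{(r)}$ increases in $j$ and is bounded, we have $B(z)\cap Y_V^{(r)}=B^{(N)}(z)\cap Y_V^{(r)}$ for $N\gg0$, so it suffices to show $\dim\bigl(B^{(N)}(z)\cap Y_V^{(r)}\bigr)=d(r)$; equivalently, writing $\gr B(z)\subseteq\gr Y_V(\fg)=\mathcal{O}(G_1[[t^{-1}]])$ for the associated graded with respect to the $Y_V$-filtration, $\dim(\gr B(z))_r=d(r)-d(r-1)$, where the bound $\ge$ is clear. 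For $\le$ I would compare with the \emph{graded} limit: let $\bar Z_r$ be the closure of the graph of $C\mapsto\bigl((\gr B(C))_1,\dots,(\gr B(C))_r,\theta_r(C)\bigr)$ in $\prod_{i\le r}\mathrm{Gr}(d(i)-d(i-1),\mathcal{O}(G_1[[t^{-1}]])_i)\times Z_r$; over a point of $\varprojlim\bar Z_r$ above $z$ one gets subspaces $\bar W_i(z)\subseteq\mathcal{O}(G_1[[t^{-1}]])_i$ of dimension $d(i)-d(i-1)$ with $\bar W_i(z)\bar W_j(z)\subseteq\bar W_{i+j}(z)$, so $\bigoplus_i\bar W_i(z)$ is a graded subalgebra of $\mathcal{O}(G_1[[t^{-1}]])$ of the correct Poincar\'e series, and the claim reduces to $(\gr B(z))_r\subseteq\bar W_r(z)$. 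This last inclusion is the main obstacle: it fails precisely when, in the limit, some element of $B^{(r+1)}(z)$ acquires $Y_V$-degree strictly smaller than the degrees of its approximants, i.e. when a leading symbol ``drops degree''. To rule this out I would use the explicit description of $B(C)$ by the generating series $\tau_i(u,C)$, $i=1,\dots,n$: choose bases of the $B^{(r)}(C)$ by monomials in the $\tau_i^{(s)}$, normalize them so that they converge along the approach to $z$, and track the symbols of the limiting monomials; since for regular $C$ these monomials have $Y_V$-degree exactly their prescribed degree, a uniform normalization preserves this in the limit. Granting this, $B(z)\cap Y_V^{(r)}=B^{(r)}(z)$ for all $r$, and the Poincar\'e series of $B(z)$ equals that of $B(C)$ on the nose.
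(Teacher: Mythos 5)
Your first two paragraphs are, in expanded form, exactly the paper's proof: the entire published argument is the observation that $B_r(C)\cdot B_s(C)\subset B_{r+s}(C)$ together with commutativity of this product are Zariski-closed conditions satisfied on the dense subset $\theta_{r+s}(G^{reg})$, hence inherited by $B_{r}^{(r)}(z)\cdot B_{s}^{(s)}(z)\subset B_{r+s}^{(r+s)}(z)$; the Poincar\'e series claim is then read off from the fact that each $B^{(r)}(z)$ is by construction a point of $\mathrm{Gr}(d(r),\dim Y_V^{(r)})$ and therefore has dimension exactly $d(r)$. Up to that point your argument is correct and is the intended one.

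Your third paragraph goes beyond what the paper does, and it is the one place where your write-up is not a proof. You are right that there is a real subtlety in identifying the tautological filtration $\{B^{(r)}(z)\}$ with the filtration induced from $Y_V(\fg)$: semicontinuity only gives $\dim\bigl(B^{(s)}(z)\cap Y_V^{(r)}\bigr)\ge d(r)$ for $s>r$, so a priori $B(z)\cap Y_V^{(r)}$ could be strictly larger than $B^{(r)}(z)$. The paper sidesteps this by (implicitly) measuring the Poincar\'e series of $B(z)$ with respect to the filtration $\{B^{(r)}(z)\}$ supplied by the construction, for which the equality of Poincar\'e series holds by definition; so for the statement as intended your extra paragraph is unnecessary. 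If one does want the stronger equality $B(z)\cap Y_V^{(r)}=B^{(r)}(z)$, your sketch does not establish it: the decisive step (``normalize the monomials in the $\tau_i^{(s)}$ so that they converge \ldots a uniform normalization preserves this in the limit'') is precisely the assertion that no leading symbol drops degree in the limit, i.e.\ it restates what has to be proved, and you flag it yourself with ``Granting this''. Either delete the third paragraph and state explicitly which filtration the Poincar\'e series refers to, or supply an actual argument for the non-degeneration of symbols; as written, that part is a gap.
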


\begin{proof} Indeed, $B(z)$ is a commutative subalgebra because being a commutative subalgebra is a Zariski-closed condition: we have $B_r(C)\cdot B_s(C)\subset B_{r+s}(C)$ for all $C\in G^{reg}$ for all $r,s$ and this product is commutative, hence we get the same for the product $B_{r}^{(r)}(z)\cdot B_{s}^{(s)}(z)=B_{r+s}^{(r)}(z)\cdot B_{r+s}^{(s)}(z)\subset B_{r+s}^{(r+s)}(z)$.
\end{proof}

\section{Wonderful compactification and Bethe subalgebras.}
\subsection{Wonderful compactification}
Following \cite{evens} we recall the construction of $\oG$, De-Concini - Procesi wonderful compactification of the Lie group $G$.
Let $V$ be a representation of $\tilde G$. This defines the map 
$$G \to \bp(\End V).$$ If $V = V_{\lambda}\oplus \bigoplus_{\mu < \lambda} V_{\mu}$ and $V_{\lambda}$ is irreducible of regular highest weight $\lambda$, then the closure of the image of $G$ in $\bp(\End V)$ is a smooth projective variety called De-Concini - Procesi wonderful compactification $\oG$. Note that there is an action of $G \times G$ on $\oG$ by 
$$(g_1,g_2) \cdot x = g_1 x g_2^{-1}.$$
There is the following theorem about the orbit structure on $\oG$. To any $I \subset \Delta$ one can assign the pair of opposite parabolic subgroups $P_I, P_I^-$ and the Levi subgroup $L_I=P_I\cap P_I^-$. Denote by $G_I = L_I / Z(L_I)$. Denote by $V_I$ the $L_I$-subrepresentation in $V$ generated by the highest vector of $V$.
There is a natural embedding of $\bp(\End(V_I)) \to \bp(\End(V))$. The subgroup $G_I$ is embedded into $\bp(\End(V))$ by the map 
$$G_I \to \bp(\End(V)),\ g \mapsto [g \circ{pr_I}], $$
where $pr_I: V \to V_I$ is the $\tilde G$-invariant projection.
\begin{thm} (see \cite{evens})
\begin{enumerate}
    \item The orbits of $G \times G$ on $\oG$ are in bijection with subsets of $\Delta$. Let $S_I^{0}$ be the orbit corresponding to $I \subset \Delta$ and $S_I$ be its closure. Then $S_I^0 \subset S_J$ if and only if $I \subset J$.
    \item   
    The maps $(G \times G) \times_{P_I \times P_I^-} G_I \to S_I^0, (g_1,g_2,x) \mapsto g_1xg_2^{-1}$ and $(G \times G) \times_{P_I \times P_I^-} \oG_I \to S_I, (g_1,g_2,x) \mapsto g_1xg_2^{-1} $ are isomorphisms of $G\times G$-varieties.
\end{enumerate}
\end{thm}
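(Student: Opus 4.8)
The plan is to reduce the statement to an explicit affine chart of $\oG$ and then to induct on the rank. One may take $V=V_\lambda$ irreducible with regular highest weight $\lambda$ (the compactification does not depend on this choice), fix a highest weight vector $v^+\in V_\lambda$ and the covector $\xi^+\in V_\lambda^*$ dual to it, and fix opposite Borel subgroups $B=TU$ and $B^-=TU^-$. First I would introduce the \emph{big cell} $\oG_0:=\oG\cap\{[A]\in\bp(\End V_\lambda)\ :\ \langle\xi^+,Av^+\rangle\neq0\}$ and verify its local structure: $\oG_0$ is affine, stable under $B^-\times B$, and the $G\times G$-action restricts to an isomorphism of varieties $U^-\times\overline T\times U\xrightarrow{\ \sim\ }\oG_0$, where $\overline T\cong\mathbb{A}^n$ ($n=\rk\fg$) is the closure of the image of $T$, with coordinates $s=(s_1,\dots,s_n)$ indexed by $\Delta$, normalized so that $s=(1,\dots,1)$ corresponds to $e\in G$ and so that $\oG_0\setminus G$ is the union of the coordinate hyperplanes $\{s_i=0\}$. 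Translating by $G\times G$, this already identifies $\oG\setminus G$ with a union $D_1\cup\dots\cup D_n$ of smooth $G\times G$-stable prime divisors with normal crossings, $D_i$ being the closure of $\{s_i=0\}$.

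Next I would derive part (1) from this picture. Since $\oG$ is projective, any nonempty $G\times G$-stable closed subset contains a closed orbit; one checks that the $G\times G$-orbit of the base point $z_\emptyset:=(e,(0,\dots,0),e)\in\oG_0$ is closed, isomorphic to $G/P_\emptyset\times G/P_\emptyset$, of dimension $\dim G-n$, and that $\bigcap_iD_i$ (which has dimension $\le\dim G-n$ by normal crossings) equals it; this is the unique closed orbit. It follows that $(G\times G)\cdot\oG_0$ is a $G\times G$-stable open set whose complement contains no closed orbit, hence is all of $\oG$, so every orbit meets $\oG_0$. On $\oG_0\cong U^-\times\overline T\times U$ the $U^-\times U$-directions are free, and the $T\times T$-action preserves $\overline T\cong\mathbb{A}^n$ and acts (through $(t_1,t_2)\mapsto t_1t_2^{-1}$) as the coordinatewise action of $T\cong(\mathbb{G}_m)^n$ via the simple roots; therefore the $G\times G$-orbit of a point of $\oG_0$ is determined by the set of vanishing coordinates $s_i$. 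This yields the bijection with subsets $I\subset\Delta$, where $S_I^0$ is the orbit meeting $\oG_0$ along $\{s_i\neq0\iff i\in I\}$; then $S_I^0\subset D_i$ exactly for $i\notin I$, so $\overline{S_I^0}=\bigcap_{i\notin I}D_i$, and $S_I^0\subset S_J$ iff every $D_i$ containing $S_J^0$ contains $S_I^0$, i.e. iff $I\subset J$.

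For part (2) I would first compute the stabilizer of the base point of $S_I^0$. This point $z_I$ has coordinates $s_i=1$ for $i\in I$ and $s_i=0$ otherwise, and as a point of $\bp(\End V_\lambda)$ it equals $[\mathrm{pr}_I]$, where $\mathrm{pr}_I\colon V_\lambda\to V_{\lambda,I}$ is the projection onto the $L_I$-subrepresentation $V_{\lambda,I}$ generated by $v^+$, along its $L_I$-stable complement $W_I$. One checks $\mathrm{Stab}_G(V_{\lambda,I})=P_I$ and $\mathrm{Stab}_G(W_I)=P_I^-$, and that $g_1\,\mathrm{pr}_I\,g_2^{-1}=(\overline{g_1}\,\overline{g_2}^{-1})\circ\mathrm{pr}_I$ for $g_1\in P_I,\ g_2\in P_I^-$, where $\overline{g}$ is the image of $g$ in $L_I$ acting on $V_{\lambda,I}$; since $\lambda$ is regular, $V_{\lambda,I}$ has regular $\fg_I$-highest weight, so an element of $L_I$ acts on it by a scalar iff it is trivial in $G_I=L_I/Z(L_I)$. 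Hence $\mathrm{Stab}_{G\times G}(z_I)=\{(g_1,g_2)\in P_I\times P_I^-\ :\ \overline{g_1}=\overline{g_2}\ \text{in}\ G_I\}$, and the orbit map factors through the claimed isomorphism $(G\times G)\times_{P_I\times P_I^-}G_I\xrightarrow{\ \sim\ }S_I^0$ of $G\times G$-varieties. For the closure $S_I$ I would use that the closure of the subgroup $G_I\hookrightarrow\bp(\End V_\lambda),\ x\mapsto[x\circ\mathrm{pr}_I]$, is precisely the wonderful compactification $\oG_I$ of the adjoint group $G_I$ (again, it is the closure of $G_I$ in the projective space of endomorphisms of a module with regular highest weight); this yields a $G\times G$-equivariant proper surjection $(G\times G)\times_{P_I\times P_I^-}\oG_I\to S_I$ which restricts, on the open stratum of each $\oG_J$ with $J\subseteq I$, to the isomorphism just obtained for the Levi $G_J$, hence is bijective, hence an isomorphism since $\oG$ is smooth. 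The hard part is exactly this last step: matching the boundary stratum $\bigcap_{i\notin I}D_i$ with the wonderful compactification of the \emph{smaller} adjoint group $G_I$ — i.e. checking that the normal directions along the $D_i$ with $i\notin I$ reproduce the De Concini--Procesi data for $\fg_I$ — where one genuinely needs the normal-crossing (``wonderful'') structure of $\oG$ rather than merely its smoothness. All of this is carried out in \cite{DCP}; see also \cite{evens}.
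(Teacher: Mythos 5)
The paper does not prove this theorem: it is quoted verbatim from the literature (De Concini--Procesi \cite{DCP}, Evens--Jones \cite{evens}), so there is no in-paper argument to compare yours against. Your sketch is a faithful outline of the standard proof in those references: big cell $\oG_0\cong U^-\times\overline{T}\times U$, reduction of the orbit analysis to the toric piece $\overline{T}\cong\mathbb{A}^n$, stabilizer computation at the base point $[\mathrm{pr}_I]$, and identification of $S_I$ with the induced bundle with fiber $\oG_I$ via Zariski's main theorem. The one step you should tighten is the claim that $(G\times G)\cdot\oG_0=\oG$: you deduce it from uniqueness of the closed orbit, but your justification of uniqueness (that $\bigcap_i D_i$ is a single orbit) is circular, since it does not explain why an arbitrary closed orbit must lie in every $D_i$. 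The standard repair is Borel's fixed point theorem: every closed orbit, being projective, contains a $B^-\times B$-fixed point, and one checks directly that $\bp(\End V_\lambda)$ has a unique $B^-\times B$-fixed line meeting $\oG$, namely the one spanned by $v_{w_0\lambda}\otimes\xi^+$, which lies in the orbit of $z_\emptyset$. With that insertion your argument is complete and agrees with \cite{evens}.
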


Note that the stratum $S_I^0$ is a homogeneous bundle over $G/P_I \times G/P_I^{-}$ with the fiber $G_I$. Let $$\varphi: S_I^0 \to G/P_I \times G / P_I^{-}$$
be the projection. We identify $G/ P_I$ with the set of parabolic subalgebras $\fp\subset \fg$ conjugate to $\fp_I$. Similarly, $G/ P_I^-$ is the set of parabolic subalgebras $\fp^-\subset \fg$ conjugate to $\fp_I^-$. Note that $G/P_I \times G / P_I^{-}$ contains a unique open $G$-orbit. 

A pair of parabolic subalgebras $(\fp,\fp^-)$ belongs to the open $G$-orbit if and only if $\fp+\fp^-=\fg$. Indeed, the $G$-orbit of $(\fp,\fp^-)$ is open if and only if the projection map 
$$\fg \to \fg \oplus \fg \to \fg/\fp_I \oplus \fg/\fp_I^-,$$
is surjective, which is equivalent to the condition $\fp+\fp^-=\fg$.

Consider the preimage of this open $G$-orbit on $G/P_I \times G / P_I^{-}$ in the stratum $S_I^0$:
$$S_I^{00} = \bigcup_{\exists g\in G: \Ad(g) \fp = \fp_I, \Ad(g) \fp^{-} = \fp_I^-} \varphi^{-1}(\fp,\fp^-)$$ 
Clearly $S_I^{00}$ is a Zariski open subset of $S_I^0$. 

Let $T$ be a maximal torus of $G$ and $\overline{T}$ be its closure in the wonderful compactification.
\begin{prop}
\label{torus}
$\overline{T}$ belongs to $\bigcup_I S_I^{00}$.
\end{prop}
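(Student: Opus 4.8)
The plan is to reduce the assertion to the standard description of the torus closure $\overline T\subset\overline G$ as a toric variety, together with the combinatorics of the $T$-orbit sitting inside each stratum.

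Recall from \cite{DCP},\cite{evens} that $\overline G$ carries a canonical affine open ``big cell'' $\overline G_0$, and that $Z:=\overline T\cap\overline G_0$ is isomorphic to $\bc^n$ in such a way that $T$, acting by left translation (i.e.\ as $T\times\{e\}\subset G\times G$), acts on $Z=\bc^n$ coordinatewise through the simple roots $\alpha_1,\dots,\alpha_n$, and the $i$-th coordinate hyperplane of $Z$ is $\overline G_0\cap S_{\Delta\setminus\{i\}}$. First I would show $Z\subseteq\bigcup_I S_I^{00}$. Since $\overline G=\bigsqcup_I S_I^0$ and the $i$-th coordinate hyperplane of $Z$ lies in $S_{\Delta\setminus\{i\}}$, the intersection $Z\cap S_I^0$ is exactly $\{z\in\bc^n:\ z_i\neq 0\ \text{precisely for}\ i\in I\}\cong(\bc^\times)^I$; because $(\alpha_i)_{i\in I}\colon T\to(\bc^\times)^I$ is surjective ($G$ is adjoint, so the $\alpha_i$ form a basis of $X^*(T)$), this is a single $T$-orbit, and it contains the canonical base point $z_I$ of $S_I^0$, for which $\varphi(z_I)=(\fp_I,\fp_I^-)$. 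As $\fp_I$ and $\fp_I^-$ both contain $T$, the point $(\fp_I,\fp_I^-)\in G/P_I\times G/P_I^-$ is $T$-fixed, so the $G\times G$-equivariant map $\varphi$ is constant, equal to $(\fp_I,\fp_I^-)$, on the whole orbit $Z\cap S_I^0$. Since $(\fp_I,\fp_I^-)$ lies in the open $G$-orbit of $G/P_I\times G/P_I^-$ (indeed $\fp_I+\fp_I^-=\fg$), this gives $Z\cap S_I^0\subseteq S_I^{00}$ for every $I$, hence $Z=\bigsqcup_I(Z\cap S_I^0)\subseteq\bigcup_I S_I^{00}$.

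Next I would use Weyl group symmetry. Each $S_I^{00}=\varphi^{-1}\big(G\cdot(\fp_I,\fp_I^-)\big)$ is stable under the diagonal $G$-action, because $\fp+\fp^-=\fg$ forces $\Ad(g)\fp+\Ad(g)\fp^-=\fg$; in particular $\bigcup_I S_I^{00}$ is stable under conjugation by any representative $\dot w\in N(T)$ of $w\in W$. Conjugating the inclusion just obtained by $\dot w$ yields $\dot w Z\dot w^{-1}\subseteq\bigcup_I S_I^{00}$ for every $w\in W$. Finally, $\overline T$ is the toric variety of the Coxeter fan, whose maximal cones are the Weyl chambers; correspondingly its affine charts are exactly the $W$-translates $\dot w Z\dot w^{-1}=\overline T\cap\dot w\overline G_0\dot w^{-1}$ of the big-cell chart, and they cover $\overline T$. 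Therefore $\overline T=\bigcup_{w\in W}\dot w Z\dot w^{-1}\subseteq\bigcup_I S_I^{00}$, as claimed.

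The only genuine content is the first step, and within it the two points to pin down are that $Z$ meets $S_I^0$ in a single $T$-orbit and that $\varphi$ sends the base point $z_I$ to a pair of \emph{opposite} parabolics $(\fp_I,\fp_I^-)$ --- it is exactly this opposition that places $\varphi(z_I)$ in the open $G$-orbit. Both are standard facts about $\overline G$, so the proof is a matter of assembling them; as a consistency check I would inspect the rank one case $\overline G=\overline{PGL_2}=\bp^3$, where $\overline T\cong\bp^1$ meets the closed stratum $\bp^1\times\bp^1$ in the two points $(\bc e_1,\bc e_2)$ and $(\bc e_2,\bc e_1)$, each a pair of opposite Borel subalgebras, hence each in $S_\emptyset^{00}$.
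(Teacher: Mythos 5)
Your argument is correct, but it proceeds quite differently from the paper's. You import the toric description of $\overline{T}$: the big-cell chart $Z=\overline{T}\cap\overline{G}_0\cong\bc^n$ with $T$ acting through the simple roots, the identification of $Z\cap S_I^0$ with a single $T$-orbit through the base point $[\pr_I]$, the $G\times G$-equivariance of $\varphi$ to conclude that $\varphi$ is constant equal to $(\fp_I,\fp_I^-)$ on that orbit, and finally the covering of $\overline{T}$ by the $W$-translates of $Z$ together with the diagonal-$G$-stability of each $S_I^{00}$. All of these inputs are genuinely standard (De Concini--Procesi, or the Evens--Jones notes cited in the paper), and assembled this way they do prove the claim; as a bonus your argument computes $\varphi$ explicitly on each piece of $\overline{T}$. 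The paper instead argues pointwise and avoids the toric structure entirely: for an arbitrary $x\in\overline{T}\subset\bp(\End V)$ it considers the left and right stabilizer radicals $R=\{g\mid gx=x\}$ and $R^-=\{g\mid xg^{-1}=x\}$, recovers the parabolics as their normalizers, and uses a Shapovalov-type form on $V$ (for which all operators in the image of $T$, hence all operators in $\overline{T}$, are self-adjoint) to get $\sigma(R)=R^-$ for the Chevalley antiautomorphism $\sigma$; since $\fh\subset\fp$, the parabolics $\fp$ and $\fp^-=\sigma(\fp)$ are opposite, so $\fp+\fp^-=\fg$. The paper's route is shorter and more self-contained; yours trades that for an explicit picture of how $\overline{T}$ sits across the strata, which is also what underlies the $\fsl_n$ computation in Section 6.
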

\begin{proof}
Suppose that $x \in \overline{T}$. Here we consider $x$ as a point in $\bp(\End V)$.
Note that in $\oG$ different points have different stabilizers in $G\times G$. Therefore, if we find the stabilizer of $x$ then we find the corresponding point in $\bigcup_I S_I$. 
Consider $R = \{ g \, | \, gx=x \}$ and $R^- = \{ g \, | \, xg^{-1}=x \}$.
Then $R$ and $R^{-}$ are radicals of the parabolic subgroups $P$ and $P^{-}$, corresponding to the point $x$. 
Next, we recover the parabolics $P, P^-$ as the normalizers of $R, R^-$, respectively.
We claim that corresponding subalgebras $\fp$ and $\fp^-$ has the property, that $\fp + \fp^- = \fg$, therefore this point belongs to $\bigcup_I S_I^{00}$.
To prove it we need the following antiautomorphism which is constant on $\fh$
$$\sigma: \fg \to \fg, \quad \fg_{\alpha} \to \fg_{-\alpha} \ \text{for} \ \alpha \in \Phi$$
and a non-degenrate Shapovalov form on $V$ with the property
$$(xv,w) = (v, \sigma(x)w)$$
for any $x \in \fg$ and $v,w \in V$.

With respect to this form, all operators from the image of $T$ are self-adjoint, hence the same is true for all operators from $\overline{T}$, so we have $\sigma(R) = R^-$.
But then $\fp = \sigma(\fp^-)$ and hence $\fp+\fp^- = \fg$, because $\fh \subset \fp$.

\end{proof}

\subsection{Bethe subalgebras}
We assign a commutative subalgebra of the Yangian to any $X \in \overline{G}$. Let $$V = \bigoplus_i V(\omega_i, 0)$$
be again the sum of fundamental representations of the $Y(\fg)$.
If we consider $V$ as a representation of Lie algebra $\fg$, then $$V = \left(V_{\omega_1} \oplus \left(\bigoplus_{\mu_1 < \omega_1} V_{\mu_1}\right)\right) \oplus \ldots \oplus \left(V_{\omega_n} \oplus \left(\bigoplus_{\mu_n < \omega_n} V_{\mu_n}\right)\right).$$
Consider the Segre embedding 
$$\theta: G \to \prod_i \bp(\End(V(\omega_i,0))) \to \bp\left(\End\left(\bigotimes_i V(\omega_i,0)\right)\right).$$
Note that $$\End\left(\bigotimes_i V(\omega_i,0)\right) =  \End(V_{\omega_1 + \ldots + \omega_n}) \oplus \End(\oplus_{\mu < \sum_i \omega_i} V_{\mu}).$$
The closure of the image of $G$ under $\theta$ is the same as that of $G$ in $\prod_i \bp(\End(V(\omega_i,0)))$. So the latter is isomorphic to the wonderful closure $\oG$, see \cite[Proposition 3.5]{evens}.

\subsection{Limit subalgebras and the wonderful compactification}

Suppose that $X = (X_1, \ldots, X_n) \in \oG\subset \prod_i \bp(\End(V(\omega_i,0)))$.
We define the subalgebra $B(X)$ of $Y_V(\fg)$ using the same formulas as in Definition \ref{bethe}, just changing $\rho_i(C)$ to $\hat X_i$:
$$\tau_i(u,X) = \tr_{V(\omega_i,0)} X_i T^i(u),\ \  1 \leqslant i \leqslant n.$$
Here $\hat X_i$ is an arbitary representative of $X_i$ in $\End(V(\omega_i,0))$. Note that this definition is correct because $\tau_i(u,X) = \tau_i (u, cX)$ for any $c \in \bc / \{0\}$.
Note that Poincar\'e series of $B(X)$ could be smaller that that of $B(C)$ with $C \in G^{reg}$.

On the other hand we have the scheme $Z$ parameterizing all limit Bethe subalgebras. Let $\tilde{Z}$ be the closure of $G^{reg}$ in $Z\times\oG$.

\begin{prop}
For any $(z,X) \in \tilde{Z}\subset Z\times\oG$ we have $B(X)\subseteq B(z)$. In particular subalgebra $B(X)\subset Y(\fg)$ is commutative.
\end{prop}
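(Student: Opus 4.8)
The plan is to deduce the statement from the fact that everything in sight is obtained by taking Zariski closures of loci where the relevant commutation identities hold, so that both the inclusion $B(X)\subseteq B(z)$ and the commutativity of $B(X)$ pass to the limit. Concretely, fix $r\geq 1$ and consider the filtered piece $B^{(r)}(X)=B(X)\cap Y_V^{(r)}$. The coefficients of $\tau_i(u,X)$ up to degree $r$ depend algebraically (in fact linearly, once a representative $\hat X_i$ is chosen) on $X\in\prod_i\mathbb{P}(\End V(\omega_i,0))$; this gives a morphism $C\mapsto B^{(r)}(C)$ from $G^{reg}$ to the product of Grassmannians which, combined with the map $\theta_r$ to $Z_r$, produces a morphism $G^{reg}\to Z_r\times\prod_i\mathbb{P}(\End V(\omega_i,0))$ whose image closure is (the image of) $\tilde Z$. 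So the first step is to set up these morphisms carefully and identify $\tilde Z$ with an inverse limit of closures $\tilde Z_r$ just as $Z=\varprojlim Z_r$.

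Second, I would observe that for $C\in G^{reg}$ the inclusion $B^{(r)}(C)\subseteq B_r^{(r)}(\theta(C))$ holds trivially (indeed with equality), and that ``$W_1\subseteq W_2$'' is a Zariski-closed condition on a pair of points of Grassmannians of fixed dimensions. Hence on the closure $\tilde Z_r$ we still have $B^{(r)}(X)\subseteq B_r^{(r)}(z)$ for every $(z,X)\in\tilde Z_r$. Here one must be slightly careful: a priori the fibre dimension $\dim B^{(r)}(X)$ can jump down in the limit (as the excerpt already warns, the Poincaré series of $B(X)$ may be smaller), so $X\mapsto B^{(r)}(X)$ is only a \emph{rational} map to the Grassmannian of the generic dimension; but this is exactly why we work inside $\tilde Z\subseteq Z\times\oG$ rather than over $\oG$ — passing to the closure of the graph resolves the indeterminacy, and the limiting subspace one lands in contains the honest span of the limiting $\tau_i$-coefficients. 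Spelling out this ``the limit of the spans contains the span of the limits'' semicontinuity point is, I expect, the main technical obstacle, and the place where one genuinely uses that $\tilde Z$ is defined as a closure of a graph rather than as a fibre product.

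Third, taking the union over $r$ as in the definition of $B(z)$, one gets $B(X)=\bigcup_r B^{(r)}(X)\subseteq\bigcup_r B_r^{(r)}(z)=B(z)$, which is the first assertion. For the ``in particular'' part, commutativity of $B(X)$ follows at once: $B(X)$ is a subalgebra (it is generated by the coefficients of the $\tau_i(u,X)$, and products of these lie in higher filtered pieces which are again of this form), it is contained in $B(z)$, and $B(z)$ is commutative by Proposition~\ref{pr-limit}. Alternatively, one can prove commutativity of $B(X)$ directly by the same closure argument applied to the relation $[\tau_i(u,C),\tau_j(v,C)]=0$: this identity holds on all of $G^{reg}$ and its vanishing is Zariski-closed, and since the coefficients of $\tau_i(u,X)$ depend algebraically on $X\in\oG$, it persists on $\oG\supseteq\overline{G^{reg}}$. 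I would include this second, self-contained argument as well, since it does not even need $\tilde Z$ and makes the final sentence of the proposition transparent. The only remaining bookkeeping is to check that the chosen representatives $\hat X_i$ do not matter — which is immediate from $\tau_i(u,cX_i)=c\,\tau_i(u,X_i)$ — so that the subalgebra $B(X)$ and all the above identities are genuinely well defined on $\oG$.
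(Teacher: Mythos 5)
Your proposal is correct and is essentially the paper's argument: the paper's entire proof is the one-line observation that $B(X)\subseteq B(z)$ is a Zariski-closed condition on $(z,X)\in\tilde Z$ which holds on the dense open subset $G^{reg}\subset\tilde Z$, and your filtered, generator-by-generator elaboration (including the semicontinuity caveat and the reduction of commutativity to Proposition~\ref{pr-limit}) just spells out why that condition is indeed closed. No gaps; your extra direct closure argument for $[\tau_i(u,X),\tau_j(v,X)]=0$ is a valid, slightly more self-contained way to get the final sentence.
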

\begin{proof}
$B(X)\subseteq B(z)$ is a Zariski closed condition on $(z,X)\in \tilde{Z}\subset Z\times\oG$ which is satisfied on the open dense subset $G^{reg}\subset\tilde{Z}$.
\end{proof}

Our purpose is to describe these subalgebras explicitly for any $X\in \bigcup_I S_I^{00}$.
Consider a point $X=(g, g, x) \in S_I^{00}$, where $g \in G, x = \tilde x \circ \pr_I \in \bp(\End(V_I)), \tilde x \in G_I$. Let $\fl = \Ad(g)\fp_I \cap \Ad(g)\fp_I^{-}$ be the corresponding Levi subalgebra.
\begin{thm}
\label{bethelevi}
For any $X=(g, g, x) \in S_I^{00}$, the corresponding subalgebra $B(X)$ is $B(g\hat x g^{-1}) \subset Y_V(\fl) \subset Y_V(\fg)$, where $\hat x \in L_I$ such that the class $[\hat x]$ in $L_I / Z(L_I)$ is $\tilde x$. In particular, for any $(z,X)\in \tilde{Z}$ such that $X\in S^{00}$ and $x\in G_I^{reg}$ we have $B(z)=B(X)$.
\end{thm}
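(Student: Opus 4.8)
The plan is to first reduce the statement about $B(X)$ for $X=(g,g,x)\in S_I^{00}$ to the case $g=e$ by conjugation-equivariance of the whole construction, and then to identify, on the level of the $RTT$ presentation, the ``trace over $V(\omega_i,0)$ of $X_iT^i(u)$'' with the corresponding trace taken inside the Levi Yangian $Y_V(\fl)$ acting on the sub-representation $V_I$. Concretely, I would begin by recalling that the $G\times G$-action on $\oG$ lifts to an action on the family $\{B(X)\}$: for $h\in G$, conjugation by a lift of $h$ inside $Y_V(\fg)$ (which exists because $\fg\subset Y_V(\fg)$ and $G$ is the adjoint group) sends $B(X)$ to $B((h,h)\cdot X)$, since conjugating $T^i(u)$ by $\rho_i(h)$ and simultaneously replacing $X_i$ by $\rho_i(h)X_i\rho_i(h)^{-1}$ leaves the trace $\tau_i(u,X)$ unchanged up to this conjugation. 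Hence it suffices to treat $X=(e,e,x)$ with $x=\tilde x\circ\pr_I$, $\tilde x\in G_I$, $\fl=\fp_I\cap\fp_I^-$, and to show $B(X)=B(\hat x)\subset Y_V(\fl)\subset Y_V(\fg)$ with $\hat x\in L_I$ a lift of $\tilde x$.

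Next I would analyze the operator $X_i=\hat{\tilde x}\circ\pr_I\in\End(V(\omega_i,0))$ more carefully. The representative $\pr_I\colon V(\omega_i,0)\to V_I^i$ is the $Y_{new}(\fl)$-equivariant (in fact $\fg$-equivariant after restriction) projection onto the Levi sub-representation $V_I^i$ studied in Section~\ref{levisub}, composed with an automorphism $\hat{\tilde x}$ of $V_I^i$ coming from $L_I$. The trace $\tau_i(u,X)=\tr_{V(\omega_i,0)}(\hat{\tilde x}\,\pr_I\,T^i(u))$ then only ``sees'' the block of $T^i(u)$ landing in $V_I^i$ and coming out of $V_I^i$, i.e. only the matrix entries $t_{\beta,v}(u)$ with $v\in V_I^i$, $\beta\in (W_i)^\perp=(V_I^i)^*$. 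These are precisely the generators of $Y_V(\fl)$ by definition. So I would argue: $\tr_{V(\omega_i,0)}(\hat{\tilde x}\,\pr_I\,T^i(u)) = \tr_{V_I^i}\big(\hat{\tilde x}\,(\pr_I\,T^i(u)|_{V_I^i})\big)$, and the operator $\pr_I\,T^i(u)|_{V_I^i}$ is exactly the $T$-matrix of the Levi Yangian $Y_V(\fl)$ in the fundamental representation $V_I(\omega_i,0)$ of $Y_{new}([\fl,\fl])=Y_{new}(\fg_I)$ — this is where Propositions~\ref{ylevi} and the identification $V_I^i=V_I(\omega_i,0)$ enter. Hence $\tau_i(u,X)$ equals the Bethe generator $\tau_i^{\fl}(u,\hat x)$ of $B(\hat x)$ computed inside $Y_V(\fl)$, and $B(X)=B(\hat x)$. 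For the indices $i\notin I$, $V_I^i$ is one-dimensional (the highest weight is not moved by $\fg_I$), so $\tau_i(u,X)$ is, up to scalar, the corresponding $a_i(u)$-type series lying in the center factor $\bc[a_j^{(r)}]$ of $Y_{new}(\fl)$, consistently with the ``reductive Bethe subalgebra = Bethe of $[\fl,\fl]$ tensor center'' convention.

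For the final sentence, I would combine this with the limit-subalgebra picture: by the preceding proposition $B(X)\subseteq B(z)$ for $(z,X)\in\tilde Z$, so it remains to prove the reverse inclusion $B(z)\subseteq B(X)$, equivalently the equality of Poincar\'e series, under the extra hypothesis $x\in G_I^{reg}$. When $\tilde x$ (hence $\hat x$) is regular in $G_I$, Theorem~\ref{thm1} applied to $Y(\fg_I)$ shows $B(\hat x)\subset Y_V(\fl)$ is a free polynomial algebra whose Poincar\'e series, together with the polynomial center $\bc[a_j^{(r)}]_{j\notin I}$, matches that of the Cartan subalgebra $H\subset Y(\fg)$, which by Theorem~\ref{thm1} again is the Poincar\'e series of $B(C)$ for generic $C\in G^{reg}$, hence of $B(z)$ by Proposition~\ref{pr-limit}. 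A closed embedding $B(X)\hookrightarrow B(z)$ of filtered algebras with the same (finite in each degree) Poincar\'e series is forced to be an isomorphism degree by degree, giving $B(z)=B(X)$.

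The main obstacle I expect is the middle step: rigorously identifying $\pr_I\,T^i(u)|_{V_I^i}$ with the Levi $T$-matrix, i.e. checking that the $RTT$ relation restricts correctly and that the isomorphism $Y_V(\fl)\simeq (Y_{V_I}(\fg_I)\otimes Z)/J$ from Proposition~\ref{ylevi} is compatible with \emph{traces}, not merely with generators — one must make sure no contributions from the complement $W_i=\bigoplus_{\mu}W_i^\mu$ sneak into $\tr_{V(\omega_i,0)}(X_iT^i(u))$, which uses that $\pr_I$ kills $W_i$ and that $T^i(u)$ preserves the generalized eigenspace decomposition of the central element $x=\sum_{j\notin I}a_j^{(1)}$ only up to lower-degree corrections; handling this on the nose (rather than just on the associated graded) is the delicate point. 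A secondary subtlety is making the conjugation-equivariance argument precise when $\fg$ is not of adjoint type at the group level inside the Yangian, but since we work with the adjoint group $G$ and $\fg\subset Y(\fg)$, exponentiating ad-nilpotent and semisimple parts of $\fg$ suffices.
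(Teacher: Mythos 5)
Your proposal is correct and follows essentially the same route as the paper: the trace against $X_i=\rho_i(g)\,\tilde x\circ\pr_I\,\rho_i(g)^{-1}$ is rewritten by cyclicity/conjugation as a trace over the Levi block $V_I^i$, identified via Proposition~\ref{ylevi} with a Bethe generator of $Y_V(\fl)$ (central for $i\notin I$, a Bethe generator of $Y_V([\fl,\fl])$ for $i\in I$), and the final equality $B(z)=B(X)$ follows from the inclusion $B(X)\subseteq B(z)$ together with the Poincar\'e-series comparison using regularity of $x$ in $G_I$. The ``main obstacle'' you flag is in fact immediate: since $\tr (PM)=\tr (PMP)$ for any projection $P$, no contribution from $W_i$ can enter the trace, and no statement about $T^i(u)$ preserving the eigenspace decomposition of $x$ is needed.
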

\begin{proof} We have
\begin{gather*}
\tau_i(u,X) = \tr_{V(\omega_i,0)} \rho_i(g)x\rho_i(g^{-1})T^i(u) = \\ \tr_{V(\omega_i,0)} \rho_i(g) x \Ad(g^{-1})T^i(u)\rho_i(g^{-1}) = \rho_i(g)\tr_{V(\omega_i,0)}x \Ad(g^{-1})T^i(u) \rho_i(g^{-1}) = \\ \rho_i(g)\tr_{V_I \cap V(\omega_i,0)} \rho_I(\hat x) \Ad(g^{-1})T^i(u) \rho_i(g^{-1})  = \tr_{V_I \cap V(\omega_i,0)} \rho_i(g)\rho_I(\hat x)\rho_i(g^{-1})T^i(u).
\end{gather*}
so the generators of $B(X)$ belong to $B(gxg^{-1}) \subset Y_V(\fl)$.
Note that $\tau_i(u,X)$ with $i \in Q \setminus I$ generate the center of $Y_V(\fl)$ and $\tau_i(u,X)$ with $i \in I$ generate Bethe subalgebra in $Y_V([\fl,\fl])$. Since $B(X)$ is conjugate to the Bethe subalgebra $B(x)$ in $Y_V(\fl)$ it has the same Poincar\'e series as the subalgebra $H$. Hence the Poincar\'e series of $B(X)$ and $B(z)$ are the same, so $B(X)=B(z)$. 
\end{proof}

\begin{rem}
By Proposition~\ref{torus} the above assertion is true for any $X\in \overline{T}$.
\end{rem}

\begin{cor}
For $X=(g,g,x)\in S^{00}_I$ such that $x\in G_I$ is regular, the corresponding Bethe subalgebra $B(X)$ is a free polynomial algebra with the same Poincar\'e series as that of the Cartan subalgebra $H\subset Y(\fg)$.
\end{cor}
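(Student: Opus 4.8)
The plan is to read this off from Theorem~\ref{bethelevi} together with Theorem~\ref{thm1}. By Theorem~\ref{bethelevi}, for $X=(g,g,x)\in S_I^{00}$ the algebra $B(X)$ is the Bethe subalgebra $B(g\hat x g^{-1})\subset Y_V(\fl)$ of the Yangian of the Levi subalgebra $\fl=\Ad(g)\fl_I$, where $\fl_I$ is the standard Levi attached to $I$ (the one denoted $\fl$ in Section~\ref{levisub}) and $\hat x\in L_I$ has image $x$ in $G_I=L_I/Z(L_I)$. The adjoint action of $\fg$ on $Y(\fg)$ integrates to an action of $G$ by automorphisms preserving the degree filtration (in the $RTT$ picture, $T(u)\mapsto\rho(g)T(u)\rho(g)^{-1}$, which preserves the $RTT$ relations because $R(u)$ is $G$-invariant, and obviously preserves the filtration); this automorphism carries $Y_V(\fl_I)$ onto $Y_V(\fl)$ and Bethe subalgebras to Bethe subalgebras. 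Hence it suffices to show that $B(\hat x)\subset Y_V(\fl_I)$ --- the Bethe subalgebra of the \emph{standard} Levi Yangian at a parameter whose image in $G_I$ is regular --- is a free polynomial algebra with the same Poincar\'e series as $H\subset Y(\fg)$.

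Next I would unfold the structure of the Levi Yangian from Section~\ref{levisub}. By Proposition~\ref{ylevi} together with the Proposition identifying $Y_{new}(\fl_I)$ with $Y_{new}(\fg_I)\otimes\bc[a_j^{(r)}\mid j\in\Delta\setminus I,\ r>0]$ (where $\deg a_j^{(r)}=r$), one has $Y_V(\fl_I)\cong Y(\fg_I)\otimes\bc[a_j^{(r)}]$, and under this identification the Bethe subalgebra of the reductive Yangian factors as $B(\hat x)=B_{\fg_I}(x)\otimes\bc[a_j^{(r)}]$, where $B_{\fg_I}(x)\subset Y(\fg_I)$ is the Bethe subalgebra attached to the regular element $x\in G_I$. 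This is precisely the definition of a Bethe subalgebra of the Yangian of a reductive Lie algebra, and the fact that the series $\tau_i(u,X)$ with $i\in\Delta\setminus I$ generate the central polynomial factor while those with $i\in I$ generate $B_{\fg_I}(x)$ was already observed in the proof of Theorem~\ref{bethelevi}.

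Then I would apply Theorem~\ref{thm1} to $\fg_I$. Since $x$ is regular in $G_I$, the subalgebra $B_{\fg_I}(x)$ is a free polynomial algebra whose Poincar\'e series coincides with that of the Cartan subalgebra $H_{\fg_I}\subset Y(\fg_I)$; that is, it has $\rk\fg_I$ series of free generators of degrees $1,2,3,\ldots$. When $\fg_I$ is semisimple but not simple, one decomposes $Y(\fg_I)=\bigotimes_k Y(\fg_{I,k})$ over the simple ideals $\fg_{I,k}$ of $\fg_I$, notes that a regular element of $G_I$ is regular in each factor and that Bethe and Cartan subalgebras decompose accordingly, and applies Theorem~\ref{thm1} to each simple factor. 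Tensoring $B_{\fg_I}(x)$ with the polynomial ring $\bc[a_j^{(r)}]$ adds $|\Delta\setminus I|=\rk\fg-\rk\fg_I$ further series of free generators, again one in each degree $1,2,3,\ldots$; hence $B(\hat x)$ is a free polynomial algebra with $\rk\fg_I+(\rk\fg-\rk\fg_I)=\rk\fg$ series of free generators of degrees $1,2,3,\ldots$, which is exactly the Poincar\'e series of $H\subset Y(\fg)$. Transporting this description back by $\Ad(g)$ gives the same conclusion for $B(X)$, finishing the proof.

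The argument is essentially bookkeeping resting on results already established, so I do not expect a genuine obstacle. The points that need a little care are: that $\Ad(g)$ really is a filtered algebra automorphism of $Y_V(\fg)$ sending Bethe subalgebras of Levi Yangians to Bethe subalgebras of conjugate Levi Yangians; the extension of Theorem~\ref{thm1} from simple to semisimple $\fg_I$; and the matching of gradings, ensuring that the central generators $a_j^{(r)}$ contribute precisely the $\rk\fg-\rk\fg_I$ ``missing'' series of degrees $1,2,3,\ldots$ needed to turn the Poincar\'e series of $H_{\fg_I}$ into that of $H$.
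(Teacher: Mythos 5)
Your argument is correct and follows essentially the same route the paper intends: the corollary is an immediate consequence of Theorem~\ref{bethelevi} (which reduces $B(X)$ to a conjugate of a Bethe subalgebra of the Levi Yangian $Y_V(\fl)=Y(\fg_I)\otimes\bc[a_j^{(r)}]$) combined with Theorem~\ref{thm1} applied to the semisimple part, with the central generators $a_j^{(r)}$ supplying the remaining $\rk\fg-\rk\fg_I$ free generators in each degree. Your extra care about conjugation by $g$ being a filtered automorphism and about extending Theorem~\ref{thm1} from simple to semisimple $\fg_I$ fills in details the paper leaves implicit, but does not change the argument.
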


We conjecture that the scheme $Z$ which parameterizes limit Bethe subalgebras is a resolution of $\oG$. More precisely, we have the following 

\begin{conj}
The projection to the first factor of $Z\times \oG$ gives the isomorphism $\tilde{Z}\tilde{\to}Z$. The projection to the second factor gives a birational proper map $\tilde Z\to \oG$.
\end{conj}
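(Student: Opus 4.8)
The two halves of the conjecture are tied together, so the plan is to concentrate on the first. Once the first projection $p_{1}\colon\tilde Z\to Z$ is known to be an isomorphism, the map $Z\to\oG$ of the statement is nothing but the second projection $p_{2}\colon\tilde Z\to\oG$ read through $p_{1}^{-1}$, and it will inherit properness and birationality from $p_{2}$. So I would first dispose of the soft points — that $p_{1},p_{2}$ are birational, proper and surjective — and then put all the effort into showing that $p_{1}$ is injective on points.

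For the soft points: both $p_{1}$ and $p_{2}$ restrict to the identity on the dense open $G^{reg}$, which embeds in $Z$ via $C\mapsto\{B^{(r)}(C)\}_{r}$ and in $\oG$ as a dense open subset, and $\tilde Z$ is by definition the closure of the graph of $C\mapsto(\{B^{(r)}(C)\}_{r},C)$; hence $p_{1}$ and $p_{2}$ are birational. Writing $\tilde Z=\varprojlim_{r}\tilde Z_{r}$, where $\tilde Z_{r}$ is the closure of $G^{reg}$ in $Z_{r}\times\oG$, each $\tilde Z_{r}$ is closed in $Z_{r}\times\oG$ and $Z_{r}$ is projective (a closed subscheme of a finite product of Grassmannians), so each of the projections $\tilde Z_{r}\to Z_{r}$ and $\tilde Z_{r}\to\oG$ is projective with closed image containing the dense subset $G^{reg}$, hence surjective. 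Passing to the limit, $p_{1}$ and $p_{2}$ are proper in the pro-algebraic sense and surjective; in particular $p_{2}$ is already a surjective proper birational morphism $\tilde Z\to\oG$.

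The crux is to upgrade $p_{1}$ to an isomorphism, and for a proper birational morphism with normal target (and $Z$ is expected to be even smooth) this follows from Zariski's main theorem once $p_{1}$ is shown to be injective on points. Concretely I would recover the point $X$ directly from the filtered subalgebra $B(z)$ for $(z,X)\in\tilde Z$. On the dense locus $U\subset\oG$ consisting of the points of the strata $S_{I}^{00}$ whose $G_{I}$-component is regular, Theorem~\ref{bethelevi} does exactly this: for such $X=(g,g,x)$ one has $B(z)=B(X)$, a conjugate (by the inner automorphism of $Y(\fg)$ attached to $g$) of a Bethe subalgebra $B(\hat x)\subset Y_{V}(\fl)$ with $\hat x$ regular. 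From such a presentation one should be able to read off, first, the Levi $\fl$ together with the opposite parabolics $\Ad(g)\fp_{I},\Ad(g)\fp_{I}^{-}$ — that is, the stratum $I$ and a point of $G/P_{I}\times G/P_{I}^{-}$ — using the maximality of the Cartan subalgebra (Proposition~\ref{cartan}) and the $RTT$ description of $Y_{V}(\fl)$ (Proposition~\ref{ylevi}), and second, the regular conjugacy class of $\hat x$ in $G_{I}$ from the filtered pieces of $B(z)$ inside $Y_{V}(\fl)$; this reconstructs $X$. Since $\tilde Z$ is irreducible and $p_{1}$ is already a bijection over the dense subset $p_{1}(p_{2}^{-1}(U))\supseteq G^{reg}$, a specialization argument would then be needed to force injectivity everywhere, giving the isomorphism $\tilde Z\tilde{\to}Z$ and hence the full conjecture.

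The hard part — and the reason this is only a conjecture — is precisely that specialization step: injectivity of $p_{1}$ on a dense set is of course not enough (think of a blow-up), so one really must reconstruct $X$ from $B(z)$ also when $X$ lies in a stratum $S_{I}^{0}$ but its $G_{I}$-component $x$ is \emph{not} regular. There Theorem~\ref{bethelevi} does not apply, $B(X)$ has strictly smaller Poincar\'e series, and a priori distinct such boundary points might give the same subalgebra, which would break injectivity of $p_{1}$ (equivalently, birationality of $Z\to\oG$). Overcoming this seems to demand running the whole construction one level down — describing limits of Bethe subalgebras of $Y_{V}(\fl)$ through the wonderful compactification $\overline{G_{I}}$, as was done for $\fg=\fsl_{n}$ in \cite[Theorem~5.3.1]{ir} — and then proving by induction on the rank that the extra coordinates needed to separate these degenerations are already recorded in the inverse system $\{Z_{r}\}$ cutting out $Z$. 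Making this bookkeeping precise is exactly what would show that $Z$ is a genuine resolution of $\oG$, and not merely a scheme dominating it.
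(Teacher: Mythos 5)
The statement you are asked to prove is stated in the paper as a \emph{conjecture}: the authors give no proof of it, and the surrounding text (Proposition~\ref{pr-limit}, Theorem~\ref{bethelevi} and the remark that $B(X)\subseteq B(z)$ for $(z,X)\in\tilde Z$) only establishes the ``soft'' facts that you also record --- birationality over $G^{reg}$, properness and surjectivity of the projections from $\tilde Z$, and the identification $B(z)=B(X)$ over the locus where the $G_I$-component is regular. Your write-up reproduces exactly this state of affairs and then, to your credit, says explicitly that the remaining step is open. So this is not a proof, and it could not be graded as one: the essential content of the conjecture is precisely the part you defer.

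Two concrete gaps remain. First, injectivity of $p_1$ on points: you must reconstruct $X$ from the filtered subalgebra $B(z)$ also when $X\in S_I^0$ has non-regular $G_I$-component (and even on $S_I^{00}$ with $x$ regular, the ``one should be able to read off $\fl$, the parabolics and the conjugacy class of $\hat x$ from $B(z)$'' step is asserted, not carried out --- recovering the pair of opposite parabolics, i.e.\ the point of $G/P_I\times G/P_I^-$ and not just the Levi up to conjugacy, from the subalgebra alone needs an actual argument). At non-regular boundary points $B(X)$ has smaller Poincar\'e series, $B(X)\subsetneq B(z)$ is possible, and distinct points of $\oG$ could a priori give the same limit data; nothing in the paper rules this out. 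Second, even granting set-theoretic bijectivity of $p_1$, your appeal to Zariski's main theorem requires normality of $Z$, which is itself unknown ($Z$ is an inverse limit of closures of images in products of Grassmannians, and ``expected to be smooth'' is not a hypothesis you can use). Both obstructions are exactly why the authors state this as a conjecture rather than a theorem.
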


\section{Example $\fg= \fsl_n$}\label{se-example}

\subsection{Definition of Bethe subalgebras}
Suppose that $\fg = \fsl_n$. We consider $W = V(\omega_1,0)$. As a representation of $\fg$ this is the tautological representation $\bc^n$.
In this case $X_W(\fg)$ is well-known $Y(\fgl_n)$, see \cite{molev}. Note that $R(u) = 1 - \frac{P}{u}$ in this representation. 
Here $P$ is the permutation operator in $\End W \otimes \End W$.

Let $C$ be a diagonal matrix with the eigenvalues $\lambda_1,\ldots,\lambda_n$.
There are two equivalent definitions of a Bethe subalgebra. 
\begin{defn}
Bethe subalgebra $B(C) \subset Y_{W}(\fsl_n)$ is the subalgebra generated by coefficietns of the following series, $k=1, \ldots, n$
$$\tau_k(u,C) = \sum_{1 \leqslant a_1< \ldots < a_k \leqslant n} \lambda_{a_1} \ldots \lambda_{a_k} t_{a_1 \ldots a_k}^{a_1\ldots a_k}(u),$$
where $$t^{a_1 \ldots a_k}_{b_1 \ldots b_k}(u) = \sum_{\sigma \in S_k} (-1)^\sigma \, \cdot \, t_{a_{\sigma(1)}b_1}(u) \ldots t_{a_{\sigma(k)}b_k}(u-k+1)$$ are the quantum minors.
\end{defn}
Denote by $A_k$ the image of antisymmetrizer $$\sum_{\sigma \in S_k} (-1)^{\sigma}\sigma \in \mathbb{C}[S_k]$$ in $\End(\bc^n)^{\otimes k}$.
\begin{defn}\label{bethe-sln}
Bethe subalgebra $B(C) \subset Y_{W}(\fsl_n)$ is the subalgebra generated by coefficietns of the following series, $k=1, \ldots, n$
$$\tau_k(u,C) = \tr A_k  C_1 \ldots C_k T_1(u) \ldots T_k(u-k+1),$$
where we take the trace over all $k$ copies of $ \rm{End} \ \mathbb{C}^n$.
\end{defn}
It is well-known that in this case $V(\omega_i,0)$ as $\fsl_n$-module is $V_{\omega_i}=\Lambda^i(\bc^n)$.
Therefore $V = \bigoplus_i V_{\omega_i}$. 
Moreover, the $Y(\fsl_n)$-module $V(\omega_k,0)$ is the image of the operator $A_k$ in the tensor product of $Y(\fsl_n)$-modules $$ V(\omega_1, -(k-1)) \otimes V(\omega_1, -(k-2)) \otimes \ldots \otimes V(\omega_1, 0) = (\bc^n)^{\otimes k},$$ see \cite[Proposition 6.5.1]{molev}.
\begin{rem}
Note that our action of $T(u) \in \End W \otimes X_W(\fg)$ in representation $W$ is $R(u)$. It is different from the action in \cite{molev}, where $T^{t}(u)$ acts by $R(-u)$.
\end{rem}
Recall that $T^i(u) \in \End V \otimes Y_V(\fg)$ is the submatrix of $T(u)$-matrix, corresponding to $i$-th fundamental representation.
It is easy to see that $T^1(u)$ maps to $T(u)$ under the isomorphism $Y_V(\fsl_n) \simeq Y_{W}(\fsl_n)$. 
Therefore from the inclusion of representation it follows that $T^k(u) $ maps to $T_k(u-k+1) \ldots T_1(u) A_k = A_k T_1(u) \ldots T_k(u-k+1)$.  


So we see that our Definition~\ref{bethe} of Bethe subalgebras coincides with the Definition~\ref{bethe-sln}.

\subsection{Limits of Bethe subalgebras}
Let us define two different embeddings:
$$\iota_k: Y(\fgl_{n-k}) \to Y(\fgl_{n}); \ t^{(r)}_{ij} \mapsto t^{(r)}_{ij}$$
$$\xi_{n-k}: Y(\fgl_k) \to Y(\fgl_{n}); \ t^{(r)}_{ij} \mapsto t^{(r)}_{n-k+i,n-k+j}$$
According to PBW-theorem these maps are injective.
By definition, put
$$\gamma_n: Y(\fgl_n) \to Y(\fgl_n); \ T(u) \mapsto (T(-u-n))^{-1}.$$
It is well-known that $\gamma_n$ is an involutive automorphism of $Y(\fgl_n)$.
We define a homomorphism
$$\eta_{n-k} = \gamma_{n} \circ \xi_{n-k} \circ \gamma_k: Y(\fgl_k) \to Y(\fgl_{n}).$$
Note that $\eta_{n-k}$ is injective.

Recall the description of some limit subalgebras of $Y(\fgl_n)$ from \cite{ir}, e.g. \cite[Theorem 5.3(1)]{ir}. 
\begin{thm}
Let $C_0=\diag(\lambda_1, \ldots, \lambda_{n-k})$ and $C_1=\diag(\lambda_{n-k+1}, \ldots, \lambda_n)$.  Suppose that
$C(t) = \diag(C_0, \underbrace {0, \ldots, 0}_k)
+ t\cdot\diag(\underbrace{0, \ldots, 0}_{n-k}, C_1) \in T^{reg}$ (i.e. both $C_0$ and $C_1$ are regular and non-degenerate). Then
$$\lim_{t \to 0} B(C(t)) = \iota_k(B(C_0)) \otimes \eta_{n-k}(B(C_1)).$$
\end{thm}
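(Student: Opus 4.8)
The plan is to \emph{derive} this statement from Theorem~\ref{bethelevi} applied to the limit of $C(t)$ in $\oG$; the one genuinely new computation is the identification of that limit. First I would compute $X:=\lim_{t\to0}C(t)\in\overline{T}\subset\oG\subset\prod_i\bp(\End\Lambda^i\bc^n)$ explicitly. Since $C(t)=\diag(\lambda_1,\dots,\lambda_{n-k},t\lambda_{n-k+1},\dots,t\lambda_n)$ is diagonal, its class in $\bp(\End\Lambda^i\bc^n)$ is that of $\Lambda^iC(t)$, and the eigenvalue of $\Lambda^iC(t)$ on $e_{j_1}\wedge\dots\wedge e_{j_i}$ carries the factor $t^{\#\{l\,:\,j_l>n-k\}}$; dividing by $t^{\max(0,\,i-(n-k))}$ and letting $t\to0$ kills all wedge vectors except those with the maximal possible number of indices $\le n-k$. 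The resulting operator is $\Lambda^i(\diag(C_0,C_1))$ composed with the projection onto $V_I^i$, the highest $\fl$-constituent of $\Lambda^i\bc^n$, where $\fl=\fgl_{n-k}\oplus\fgl_k$ is the block-diagonal Levi attached to $I=\Delta\setminus\{\alpha_{n-k}\}$. A short computation of stabilizers, as in the proof of Proposition~\ref{torus}, shows the parabolics attached to $X$ are $P_I,P_I^-$, so $X=(e,e,\tilde x)\in S_I^{00}$ where $\tilde x$ is the image of $\diag(C_0,C_1)$ in $G_I\cong PGL_{n-k}\times PGL_k$; regularity and non-degeneracy of $C_0,C_1$ give $\tilde x\in G_I^{reg}$.

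Next, each $Z_r$ is complete, so the curve $t\mapsto B(C(t))$ extends to $t=0$, and $z_0:=\lim_{t\to0}B(C(t))\in Z$ is well defined; by construction $(z_0,X)\in\tilde{Z}$. Since $X\in S_I^{00}$ and its $G_I$-component is regular, the final assertion of Theorem~\ref{bethelevi} gives $B(z_0)=B(X)=B(\diag(C_0,C_1))$, the Bethe subalgebra of the reductive Yangian $Y_V(\fl)$ with parameter $\diag(C_0,C_1)\in L_I$.

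It then remains to rewrite $B(\diag(C_0,C_1))\subset Y_V(\fl)$ in the $\fgl_n$-matrix coordinates of Section~\ref{se-example} and to recognise $\iota_k(B(C_0))\otimes\eta_{n-k}(B(C_1))$. Using Proposition~\ref{ylevi} and the decomposition $V(\omega_i,0)=\Lambda^i\bc^n=V_I^i\oplus W_i$: for $i\le n-k$ one has $V_I^i=\Lambda^i\bc^{n-k}$ and the restriction of $T^i(u)$ to it is $\iota_k$ of the $i$-th fundamental $T$-matrix of $Y(\fgl_{n-k})$, so $\tau_i(u,X)=\iota_k(\tau_i(u,C_0))$; for $i>n-k$ one has $V_I^i=\det(\bc^{n-k})\otimes\Lambda^{\,i-(n-k)}\bc^k$, and a quantum Jacobi (Dodgson-condensation) identity expresses the quantum minors in $\tau_i(u,X)$ — which involve the entire first block together with some rows and columns of the second — through quantum minors of the \emph{shifted inverse} matrix of the lower $\fgl_k$-block, so that $\tau_i(u,X)$ is (a scalar multiple of) $\eta_{n-k}(\tau_{\,i-(n-k)}(u,C_1))$; finally the central series $A_{n-k}(u)$ generating the centre of $Y_V(\fl)$ corresponds to $\tau_{n-k}(u,X)$ and, via the factorization $\qdet_n=(\text{upper }\qdet)\cdot(\text{Schur-complement }\qdet)$, matches the common central part of $\iota_k(B(C_0))$ and $\eta_{n-k}(B(C_1))$. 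Since the generating series agree term by term, the two subalgebras coincide.

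The main obstacle is this last step. One has to follow the chain $Y_V(\fl)\cong Y_{new}(\fl)\cong\big(Y(\fgl_{n-k})\otimes Y(\fgl_k)\big)/(\text{identification of the two centres})$ and check compatibility with the concrete block embeddings $\iota_k,\xi_{n-k}$ and with the automorphisms $\gamma_n,\gamma_k$; in particular, establishing that the lower-right block genuinely carries the $T(u)\mapsto T(-u-\bullet)^{-1}$ twist — that $\eta_{n-k}$ and not $\xi_{n-k}$ occurs — is precisely the comparison of the antipode/duality on $Y(\fgl_k)$ with the restriction of the universal $R$-matrix of $Y(\fsl_n)$ to the $\fl$-constituents $V_I^i$ with $i>n-k$ (cf. Proposition~\ref{lift} and the role of $\gamma_n$ there). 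By contrast, the steps that locate the limit point and invoke Theorem~\ref{bethelevi} are short and carry the conceptual content of the new proof.
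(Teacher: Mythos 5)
Your proposal is correct and follows essentially the same route as the paper: identify $\lim_{t\to0}C(t)$ in $\overline{T}\subset\oG$ as a point of $S_I^{00}$ for $I=\Delta\setminus\{\alpha_{n-k}\}$ with regular $G_I$-component, invoke the last assertion of Theorem~\ref{bethelevi}, and then translate the generators $\tau_i(u,X)$ via the quantum Jacobi identity for minors of the inverse $T$-matrix (the paper cites \cite[Lemma~1.11.3]{molev}) to recognize $\iota_k(B(C_0))\otimes\eta_{n-k}(B(C_1))$. The only imprecision is that for $i>n-k$ the series $\tau_i(u,X)$ is not a scalar multiple of $\eta_{n-k}(\tau_{i-(n-k)}(u,C_1))$ but its product with the invertible quantum minor $t^{1\ldots n-k}_{1\ldots n-k}(u+n-k)$, a shift of $\tau_{n-k}(u,X)$ up to scalar -- which your remark about the common central factor already accounts for.
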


We give a new proof of this Theorem by deducing it from Theorem~\ref{bethelevi}.
Note that $Y(\fgl_n) = X_W(\fsl_n) = Y_W(\fsl_n) \otimes Z(X_W(\fsl_n))$, see e.g. \cite{molev}. Moreover $B(C) \subset Y(\fgl_n)$ contains the center and is a tensor product of a Bethe subalgebra in the $Y_{W}(\fsl_n)$ and the center. Hence it is sufficient to prove Theorem 6.5 in case of $Y_W(\fsl_n)$. 

\begin{proof}
Let $I = \Delta / \{\alpha_k\}$. Let $\fl$ be the corresponding Levi subalgebra. Consider the limit $t \to 0$ of $C(t)$ in $\overline{T}\subset\oG$. Then we obtain the point $X = (e,e, x) \in S_I^{00}$, where $$x= \diag\left(\lambda_1, \ldots, \lambda_{n-k}, \left(\prod_{i=1}^{n-k} \lambda_i\right) \lambda_{n-k+1}, \ldots, \left(\prod_{i=1}^{n-k} \lambda_i\right) \lambda_n\right).$$  

We claim that $B(X)$ coincides with $\iota_k(B(C_0)) \otimes \eta_{n-k}(B(C_1))$. Indeed, from section \ref{levisub} we have an embedding of $Y_{V_I}(\fl)  \to Y_V(\fsl_n) \simeq Y_W(\fsl_n)$ such that the image of $Y_{V_I}(\fl)$ in $Y_W(\fsl_n)$ is generated by the following elements:
$$t_{ij}(u), 1 \leq i,j \leq n-k \quad \text{from} \ V_{\omega_1} $$
$$t_{i_1,i_2}^{i_1,i_2}(u), 1 \leq i_1,i_2 \leq n-k \quad \text{from} \ V_{\omega_2}$$
\begin{center}\ldots\end{center}
$$t_{1,\ldots,n-k}^{1,\ldots,n-k}(u) \quad \text{from} \ V_{\omega_k}$$
$$t_{1, \ldots, n-k, n-k+j}^{1, \ldots, n-k, n-k+i}(u), 1 \leq i,j \leq k \quad  \text{from} \ V_{\omega_{k+1}}$$
\begin{center}\ldots\end{center}
$$t_{1,\ldots,n}^{1, \ldots, n}(u) = 1 \quad \text{from} \ V_{\omega_n}$$

Therefore Bethe subalgebra corresponding to $X$ is generated by the coefficients of the following series:
$$\tau_1(u,X) = \sum_{1 \leqslant a_1 \leqslant n-k} \lambda_{a_1} t_{a_1 a_1}(u)$$
$$\tau_2(u,X) = \sum_{1 \leqslant a_1< a_2 \leq n-k} \lambda_{a_1} \lambda_{a_2} t_{a_1 a_2}^{a_1 a_2}(u)$$
\begin{center}\ldots\end{center}
$$\tau_{n-k}(u,X) = \lambda_{1} \ldots \lambda_{n-k} t_{1 \ldots n-k}^{1 \ldots n-k}(u)$$
$$\tau_{n-k+1}(u,X) =  \left(\prod_{i=1}^{n-k} \lambda_i\right) \sum_{1 \leqslant a_1 \leqslant k}  \lambda_{n-k+a_1} t_{1, \ldots, n-k, n-k+a_1}^{1, \ldots, n-k, n-k+a_1}(u)$$
$$\tau_{n-k+2}(u,X) = \left(\prod_{i=1}^{n-k} \lambda_i\right) \sum_{1 \leqslant a_1 < a_2 \leqslant k}  \lambda_{n-k+a_1} \lambda_{n-k+a_2} t_{1, \ldots
, n-k, n-k+a_1, n-k+a_2}^{1, \ldots, n-k, n-k+a_1, n-k+a_2}(u)$$
\begin{center}\ldots\end{center}
$$\tau_{n}(u,X) = \lambda_{1} \ldots \lambda_n t_{1 \ldots n}^{1 \ldots n}(u) = \lambda_1 \ldots \lambda_n.$$

The coefficients of the first $n-k$ series generate subalgebra $\iota_k(B(C_0))$.
Next we have $t_{1 \ldots n-k}^{1 \ldots n-k}(u)$ in our subalgebra $B(X)$ hence $t_{1 \ldots n-k}^{1 \ldots n-k}(u)^{-1}$ belongs to $B(X)$ as well. 

Recall the following Lemma, see \cite[Lemma 1.11.3]{molev}.
\begin{lem}
Consider the map $\eta_{n-k}: Y(\fgl_k) \to Y(\fgl_n)$. We have 
$$\eta_{n-k}(t_{b_1, \ldots, b_m}^{a_1,\ldots,a_m}(u)) = (t_{1 \ldots n-k}^{1 \ldots n-k}(u+n-k))^{-1} \cdot t_{1, \ldots, n-k, n-k+b_1, \ldots, n-k+b_m}^{1, \ldots, n-k, n-k+a_1, \ldots, n-k+a_m}(u+n-k)$$
\end{lem}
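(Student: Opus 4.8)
The plan is to unwind the definition $\eta_{n-k}=\gamma_n\circ\xi_{n-k}\circ\gamma_k$, to identify $\eta_{n-k}$ at the level of generating matrices with passing to a Schur complement inside $T(u)$, and then to deduce the displayed formula from the quantum Jacobi ratio theorem of \cite{molev}. The one structural fact that makes the normalization come out right is that the quantum determinant $\qdet T(u)$ is central in $Y(\fgl_N)$.

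First I would compute how $\eta_{n-k}$ acts on the whole generating matrix of $Y(\fgl_k)$. The embedding $\xi_{n-k}$ sends it to the bottom-right $k\times k$ block $T_{\mathrm{SE}}(u)$ of the generating matrix $T(u)$ of $Y(\fgl_n)$; since a ring homomorphism applied entrywise commutes with inversion of an invertible square matrix, $\xi_{n-k}\circ\gamma_k$ sends it to $T_{\mathrm{SE}}(-u-k)^{-1}$. Applying $\gamma_n$, which is an algebra automorphism (hence also commutes with entrywise inversion) and which sends $T_{\mathrm{SE}}(v)$ to the bottom-right $k\times k$ block of $T(-v-n)^{-1}$, I get that $\eta_{n-k}$ maps the generating matrix of $Y(\fgl_k)$ to the inverse of the bottom-right $k\times k$ block of $T(u-n+k)^{-1}$, which by the block-inverse formula is exactly the Schur complement of the top-left $(n-k)\times(n-k)$ block of $T(u-n+k)$. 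In particular $\eta_{n-k}(t_{ij}(u))$ is the $(i,j)$ entry of that Schur complement.

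Because $\eta_{n-k}$ is an algebra homomorphism, it sends the quantum minor $t^{a_1\ldots a_m}_{b_1\ldots b_m}(u)$ of $Y(\fgl_k)$ — the alternating sum of products of matrix entries displayed above — to the same alternating sum in the entries of that Schur complement, i.e. to the $m$-th quantum minor of the Schur complement on rows $a_1,\dots,a_m$ and columns $b_1,\dots,b_m$. Thus the Lemma reduces to the purely $Y(\fgl_n)$ statement that the quantum minors of the Schur complement of the top-left $(n-k)\times(n-k)$ block of $T(\cdot)$ equal, up to the shift $u\mapsto u+n-k$, the normalized quantum minors $\big(t^{1\ldots n-k}_{1\ldots n-k}(u+n-k)\big)^{-1}t^{1,\dots,n-k,\,n-k+a_1,\dots,n-k+a_m}_{1,\dots,n-k,\,n-k+b_1,\dots,n-k+b_m}(u+n-k)$, where $t^{1\ldots n-k}_{1\ldots n-k}$ is the quantum determinant of the top-left block. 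I would prove this by applying the quantum Jacobi ratio theorem of \cite{molev} twice — once to write the minor of the Schur complement through quantum minors of $T(\cdot)^{-1}$, and once (to the block determinant) to convert those back into complementary quantum minors of $T(\cdot)$ divided by $\qdet T(\cdot)$ — and then using the centrality of $\qdet T$ to cancel the two stray factors of $\qdet T$, together with the combinatorial fact that the complement in $\{1,\dots,n\}$ of $\{n-k+1,\dots,n\}\setminus\{n-k+b_1,\dots,n-k+b_m\}$ is $\{1,\dots,n-k\}\cup\{n-k+b_1,\dots,n-k+b_m\}$ (and similarly for the $a$'s and for the full block $\{n-k+1,\dots,n\}$).

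I expect no conceptual obstacle; the genuinely delicate part is bookkeeping — tracking the three spectral-parameter shifts contributed by $\gamma_k$, by $\gamma_n$, and by each invocation of the Jacobi ratio theorem so that they combine to exactly $u\mapsto u+n-k$, and verifying that the accumulated signs multiply to $+1$. The most economical way to pin both down is to first settle the case $m=1$, where the claim is just the familiar expression for the entries of a Schur complement of a Yangian generating matrix, and then propagate. Since the statement is in fact \cite[Lemma 1.11.3]{molev}, one may alternatively simply quote it.
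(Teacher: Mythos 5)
You should first note that the paper does not prove this statement at all: it is quoted from \cite[Lemma~1.11.3]{molev}, so there is no internal argument to compare against, and your sketch is essentially the proof one finds in Molev's book. The core of your plan is sound: since $\gamma_N$ and $\xi_{n-k}$ are algebra homomorphisms they commute with entrywise inversion of the generating matrices, so $\eta_{n-k}$ does send the generating matrix of $Y(\fgl_k)$ to the inverse of the south-east $k\times k$ block of $T(\cdot)^{-1}$, i.e.\ to the Schur complement of the north-west block; a homomorphism sends a quantum minor to the same alternating sum in the images of the entries; and the remaining identity is exactly the quantum Jacobi/Sylvester circle of results, with centrality of $\qdet T(u)$ cancelling the two stray determinant factors. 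Modulo the bookkeeping you defer, the route is complete.

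That bookkeeping, however, is where the one substantive issue sits, and your own suggestion to settle $m=1$ first exposes it. Take $n=2$, $k=1$, $m=1$ and use the paper's definitions literally: $\gamma_1(t(u))=t(-u-1)^{-1}$, then $\xi_1$ gives $t_{22}(-u-1)^{-1}$, then $\gamma_2$ gives $\bigl([T(u-1)^{-1}]_{22}\bigr)^{-1}=t_{11}(u)^{-1}\qdet T(u)$ by the quantum comatrix identity $\widehat{T}(u)\,T(u-1)=\qdet T(u)$. Expanding to order $u^{-2}$ yields $\eta_1(t^{(2)})=t_{22}^{(1)}+t_{22}^{(2)}-t_{21}^{(1)}t_{12}^{(1)}$, which is the $u^{-2}$-coefficient of $(t^{1}_{1}(u))^{-1}t^{12}_{12}(u)$ and \emph{not} of $(t^{1}_{1}(u+1))^{-1}t^{12}_{12}(u+1)$ (the latter gives $t_{22}^{(2)}-t_{21}^{(1)}t_{12}^{(1)}$). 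So with the conventions for $\gamma_N$, $\xi_{n-k}$ and the quantum minors as written in this paper, the right-hand side should apparently be evaluated at $u$ rather than $u+n-k$; the displayed shift is presumably carried over from Molev's slightly different normalization. This is harmless for the application in Section~6, because a Bethe subalgebra is generated by \emph{all} Fourier coefficients of the series and is therefore insensitive to a shift of the spectral parameter. But if you carry out your plan you should expect the three shifts (from $\gamma_k$, from $\gamma_n$, and from the Jacobi ratio theorem) not to combine to $u\mapsto u+n-k$ on the nose, and you should not interpret that mismatch as an error in your own computation.
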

From this lemma we see that generators of Bethe subalgebra $\eta_{n-k}(B(C_1)) \subset Y_W(\fsl_n)$ are $$t_{1 \ldots n-k}^{1 \ldots n-k}(u+n-k)^{-1} \cdot \tau_{n-k+1}(u+n-k,X), \ldots, t_{1 \ldots n-k}^{1 \ldots n-k}(u+n-k)^{-1} \cdot \tau_n(u+n-k,X).$$ 
We conclude that $B(X) = \lim_{t \to 0} B(C(t))$.

\end{proof}






\newpage
\footnotesize{
{\bf Aleksei Ilin} \\ National Research University
Higher School of Economics, \\
Russian Federation,\\
Department of Mathematics, 6 Usacheva st, Moscow 119048;\\
{\tt alex\_omsk@211.ru}}
\\
\footnotesize{
{\bf Leonid Rybnikov} \\
National Research University
Higher School of Economics,\\ Russian Federation,\\
Department of Mathematics, 6 Usacheva st, Moscow 119048;\\
Institute for Information Transmission Problems of RAS;\\
{\tt leo.rybnikov@gmail.com}} \\

\end{document}